\newenvironment{enumindent}
{\begin{list}
  {\textbf{\arabic{itemcounter}.}}
  {\usecounter{itemcounter}\leftmargin=1.5em\labelsep=0.5em\labelwidth=1em\listparindent=0pt\itemindent=0pt}}
{\end{list}}
\newcounter{itemcounter}
\newtheorem{definition}{Definition}[section]
\newtheorem{lemma}[definition]{Lemma}
\newtheorem{theorem}[definition]{Theorem}
\newtheorem{proposition}[definition]{Proposition}
\newtheorem{corollary}[definition]{Corollary}
\theoremstyle{definition}
\newtheorem{example}{Example}
\newtheorem{remark}[definition]{Remark}
\newcommand{\psd}[1]{{\underline{\bm{#1}}}}
\newcommand{\Fi}{\ensuremath{\mathcal F}\xspace}
\newcommand{\Hi}{\ensuremath{\mathcal H}\xspace}
\newcommand{\Si}{\ensuremath{\mathcal S}\xspace}
\newcommand{\pth}[1]{(#1)}
\newcommand{\pthb}[1]{\bigl(#1\bigr)}
\newcommand{\pthB}[1]{\Bigl(#1\Bigr)}
\newcommand{\pthbb}[1]{\biggl(#1\biggr)}
\newcommand{\pthBB}[1]{\Biggl(#1\Biggr)}
\newcommand{\ptha}[1]{\left(#1\right)}
\newcommand{\bkt}[1]{[#1]}
\newcommand{\bktb}[1]{\bigl[#1\bigr]}
\newcommand{\bktB}[1]{\Bigl[#1\Bigr]}
\newcommand{\bktbb}[1]{\biggl[#1\biggr]}
\newcommand{\bkta}[1]{\left[#1\right]}
\newcommand{\brc}[1]{\{#1\}}
\newcommand{\brcb}[1]{\bigl\{#1\bigr\}}
\newcommand{\brcB}[1]{\Bigl\{#1\Bigr\}}
\newcommand{\brcbb}[1]{\biggl\{#1\biggr\}}
\newcommand{\brca}[1]{\left\{#1\right\}}
\newcommand{\dt}{\ensuremath{\textrm d}\xspace} % dt integrale
\newcommand{\eqdef}{\overset{\mathrm{def}}{=}}
\newcommand{\scpr}[2]{\left\langle #1,#2 \right\rangle}
\newcommand{\VQ}[1]{\langle#1\rangle}
\newcommand{\ivoo}[1]{\ensuremath{\left(#1\right)}}
\newcommand{\ivfo}[1]{\ensuremath{\left[#1\right)}}
\newcommand{\ivff}[1]{\ensuremath{\left[#1\right]}}
\newcommand{\abs}[1]{\lvert#1\rvert}
\newcommand{\absb}[1]{\bigl\lvert#1\bigr\rvert}
\newcommand{\absbb}[1]{\biggl\lvert#1\biggr\rvert}
\newcommand{\absa}[1]{\left\lvert#1\right\rvert}
\newcommand{\norm}[1]{\lVert#1\rVert}
\newcommand{\floor}[1]{\lfloor#1\rfloor}
\renewcommand{\Pr}{\ensuremath{\mathbb P}\xspace}
\newcommand{\pr}[2][]{\mathbb{P}#1\pth{#2}}
\newcommand{\prb}[2][]{\mathbb{P}#1\pthb{\hspace{1pt}#2\hspace{1pt}}}
\newcommand{\prbb}[2][]{\mathbb{P}#1\pthbb{#2}}
\newcommand{\prcb}[3][]{\mathbb{P}#1\pthb{\hspace{1pt}#2\bigm|#3\hspace{1pt}}}
\newcommand{\prcB}[3][]{\mathbb{P}#1\pthB{#2\Bigm|#3}}
\newcommand{\esp}[2][]{\mathbb{E}#1\bkt{#2}}
\newcommand{\espbb}[2][]{\mathbb{E}#1\bktbb{#2}}
\newcommand{\espcB}[3][]{\mathbb{E}#1\bktB{#2\Bigm|#3}}
\newcommand{\R}{\ensuremath{\mathbf{R}}\xspace}
\newcommand{\Q}{\ensuremath{\mathbf{Q}}\xspace}
\newcommand{\N}{\ensuremath{\mathbf{N}}\xspace}
\newcommand{\Z}{\ensuremath{\mathbf{Z}}\xspace}
\newcommand{\indi}{\ensuremath{\mathbf{1}}\xspace}
\newcommand{\eps}{\varepsilon}
\newcommand{\sbullet}{{\substack{\text{\fontsize{4pt}{4pt}$\bullet$}}}}
\newcommand{\vsp}{\vspace{.15cm}}
\newcommand{\vspar}{\vspace{.25cm}}
\begin{document}

\begin{frontmatter}

%% Title of the paper %%
\title{2-microlocal analysis of martingales and stochastic integrals}
\runtitle{2-microlocal analysis of martingales and stochastic integrals}

%% Authors & address %%
\author{\fnms{Paul} \snm{Balan\c{c}a} 
  \ead[label=e1]{paul.balanca@ecp.fr}
}
\and
\author{\fnms{\ \ Erick} \snm{Herbin} 
  \ead[label=e2]{erick.herbin@ecp.fr} 
  \ead[label=u1,url]{www.mas.ecp.fr/recherche/equipes/modelisation\_probabiliste}
}

\address{\printead{e1,e2}\\ \printead{u1}\\[1em] 
  \'Ecole Centrale Paris and INRIA Regularity team\\ 
  Laboratoire MAS, ECP\\ 
  Grande Voie des Vignes\\ 
  92295 Ch\^atenay-Malabry, France
}

\affiliation{\'Ecole Centrale Paris}
\runauthor{P. Balan\c{c}a and E. Herbin}

\begin{abstract}
  Recently, a new approach in the fine analysis of stochastic processes sample paths has been developed to predict the evolution of the local regularity under (pseudo-)differential operators. In this paper, we study the sample paths of continuous martingales and stochastic integrals. We proved that the almost sure \emph{2-microlocal frontier} of a martingale can be obtained through the local regularity of its quadratic variation. It allows to link the H\"older regularity of a stochastic integral to the regularity of the integrand and integrator processes. These results provide a methodology to predict the local regularity of diffusions from the fine analysis of its coefficients. We illustrate our work with examples of martingales with unusual complex regularity behaviour and square of Bessel processes. 
\end{abstract}

\begin{keyword}[class=AMS]
  \kwd{60G07}
  \kwd{60G17}
  \kwd{60G22}
  \kwd{60G44}
\end{keyword}

\begin{keyword}
  \kwd{2-microlocal analysis}
  \kwd{Bessel processes}
  \kwd{H\"older regularity}
  \kwd{multifractional Brownian motion}
  \kwd{stochastic differential equations}
  \kwd{stochastic integral}
\end{keyword}

\end{frontmatter}

%% Fichiers a inclure %%
%%%%%%%%%%%%%%%%%%%%%%%%%%%%%%%%%%%%%%%%%%%%%%%%%%%%%%%%%%%%%%%%%%%%%%%%
%% Abstract & Introduction
%%%%%%%%%%%%%%%%%%%%%%%%%%%%%%%%%%%%%%%%%%%%%%%%%%%%%%%%%%%%%%%%%%%%%%%%

\section{Introduction}

Sample paths properties of stochastic processes are widely studied since the 1970s (see e.g. \cite{Strassen(1964)}, \cite{Berman(1970)}, \cite{Berman(1972)}, \cite{Orey.Pruitt(1973)} as foundational works). This field of research is still very active and a non-exhaustive list of authors and recent works includes 
Dalang (\cite{Dalang.Nualart.ea(2011)}), Khoshnevisan (\cite{Khoshnevisan.Xiao(2009)},\cite{Dalang.Nualart.ea(2011)}), Lawler (\cite{Lawler(2009)}), Mountford(\cite{Baraka.Mountford.ea(2009)}), Xiao (\cite{Meerschaert.Wu.ea(2008)},\cite{Khoshnevisan.Xiao(2009)},\cite{Baraka.Mountford.ea(2009)}), etc. Among the variety of measures of regularity, pointwise and local H\"older exponents are the most recurrent tools used in the literature. Nevertheless, these two exponents (and as well the moduli of continuity) lack of stability under (pseudo-)differential operators or multiplication by a power function, and therefore do not completely characterize the local regularity of a function or a stochastic process at a given point. 

Simple examples can illustrate these issues. Consider the deterministic "chirp" function
\[
  t\longmapsto f(t) = \abs{t - t_0}^\alpha \sin\pthb{ \abs{t - t_0}^{-\beta} }
\]
where $\alpha$, $\beta$, $t_0$ are positive real numbers. As described in \cite{Echelard(2006)}, this function has a non-trivial regularity at $t_0$. Indeed, its pointwise and local H\"older exponents at $t_0$ (see Section 2 for definitions) are equal to 
\[
  \alpha_{f,t_0} = \alpha\qquad\text{and}\qquad \widetilde\alpha_{f,t_0} = \frac{\alpha}{1+\beta}.
\]
Using notations from \cite{Samko.Kilbas.ea(1993)}, $I_0^\gamma f:t\mapsto\frac{1}{\Gamma(\gamma)}\int_0^t (t-u)^{\gamma-1} f(u)\dt u$ denotes $f$ fractional integration of order $\gamma$. Then, the pointwise and H\"older exponents of $I_0^\gamma f$ at $t_0$ are known to be equal to
\[
  \alpha_{I_0^\gamma f,t_0} = \alpha + \frac{\gamma}{1+\beta}\qquad\text{and}\qquad \widetilde\alpha_{I_0^\gamma f,t_0} = \frac{\alpha}{1+\beta} +\gamma .
\]
Similarly, the study of the local regularity of the function $t\mapsto\abs{t-t_0}^\gamma f(t)$ leads to
\[
  \alpha_{\abs{t - t_0}^\gamma f,t_0} = \alpha + \gamma\qquad\text{and}\qquad \widetilde\alpha_{\abs{t - t_0}^\gamma f,t_0} = \frac{\alpha + \gamma}{1+\beta}.
\]
Hence, in both cases one can observe that the behaviour of local regularity of $I^\gamma_0 f$ and $\abs{t - t_0}^\gamma f(t)$ can not be completely deduced from pointwise and local H\"older exponents of $f$ (it does not correspond to a simple translation of coefficient $\gamma$).

This deterministic example can be easily transposed into a stochastic context using multifractional Brownian motion, a Gaussian process introduced in \cite{Peltier.Levy-Vehel(1995)} and \cite{Benassi.Jaffard.ea(1997)} and applied as probabilistic model in different fields (e.g. \cite{Bianchi.Vieira.ea(2004)} and \cite{Bianchi.Pianese(2008)}). This process, denoted $X^H$ is parametrized by a deterministic function $H:\R\rightarrow\ivoo{0,1}$ and has interesting regularity properties. In the particular case of a so-called regularity function set to $H(t) = a + b\cdot f(t)$ and under some conditions on coefficients $a$ and $b$, the regularity of $X^H$ at $t_0$ almost surely satisfies
\[
  \alpha_{X^H,t_0} = \alpha_{f,t_0}\qquad\text{and}\qquad \widetilde\alpha_{X^H,t_0} = \widetilde\alpha_{f,t_0},
\]
proving that stochastic processes can also have non-trivial behaviours(see \cite{Herbin(2006)}).\vsp

These two different examples illustrate the fact that pointwise and local H\"older exponents are not sufficient to describe entirely the local regularity of a deterministic function or the sample paths of a stochastic process. In this context, \emph{2-microlocal analysis} is a tool that provides a finer characterization. In particular, it allows to describe how pointwise and local exponents evolve under the action of (pseudo-)differential operators and under multiplication by power functions. If it has been first introduced in a deterministic frame (PDE precisely, see \cite{Bony(1986)}), a stochastic approach has been recently developed in \cite{Herbin.Levy-Vehel(2009)}. This previous work exhibited a Kolmogorov-like criterion which gives an almost surely lower bound for the 2-microlocal frontier of a stochastic process. It also focused on the regularity of Gaussian processes at a fixed point $t_0\in\R_+$. Thereby, the regularity of a Gaussian process $X$ at any fixed $t_0$ is almost surely characterized by its incremental variance $\esp{X_t - X_s}^2$. In particular, when considering the following Wiener integral
$
  X_t = \int_0^t \eta(u) \dt W_u,
$
it implies that the regularity of its sample paths is described by the behaviour of the deterministic function $t\mapsto\int_0^t \eta^2(u) \dt u$. 

Given this result, a natural goal is to generalize the statement to any stochastic integral
$
  \displaystyle X_t = \int_0^t H_u \dt M_u,
$
where $H$ is a progressive continuous process and $M$ is a local continuous martingale. In fact, we first prove in Theorem \ref{th:mg_vq} a uniform result on continuous martingales which links up the regularity of the process to the regularity of its quadratic variation. In the specific case of stochastic integrals, this 2-microlocal analysis result can be used to derive local behaviour of sample paths from the regularity of the integrand and the integrator. 

Through these theorems and Examples \ref{ex:mg1}, \ref{ex:mbm_int} and \ref{ex:BESQ}, we show that local regularity of martingales and stochastic integrals can vary along sample paths and may not be deterministic. Similar behaviours have already been exhibited in the literature, from the slow points of Brownian motion (\cite{Orey.Taylor(1974)},\cite{Perkins(1983)}) to more recent work on L\'evy (\cite{Jaffard(1999)}), multifractional (\cite{Ayache.Taqqu(2005)}) and Markov processes (\cite{Xiao(2004)},\cite{Barral.Fournier.ea(2010)}).

Using the 2-microlocal frontier of stochastic integrals, we finally describe how to obtain regularity results for stochastic differential equations. In particular, if it is already known that H\"older regularity of the coefficients have an impact on the existence and uniqueness of solutions (e.g. \cite{Mytnik.Perkins.ea(2006)} in the case of SPDE), we establish for SDE that it also subtly affects the local behaviour of the solution.

The paper is organized as follows: we start by a preliminary section which recalls properties of the classic 2-microlocal frontier in Section \ref{sec:cl_2ml}. Section \ref{sec:ps_2ml} introduces another deterministic tool, the pseudo 2-microlocal frontier, closely related to the previous one. Our main result on the 2-microlocal frontier of continuous martingales is proved in Section \ref{sec:2ml_mg}. Results concerning stochastic integrals and stochastic differential equations are respectively developed in Sections \ref{sec:2ml_int} and \ref{sec:2ml_SDE}. Finally, some technical proofs of deterministic and intermediate results are gathered in \ref{sec:appendix}.

%%%%%%%%%%%%%%%%%%%%%%%%%%%%%%%%%%%%%%%%%%%%%%%%%%%%%%%%%%%%%%%%%%%%%%%%
%% Classic 2-microlocal analysis
%%%%%%%%%%%%%%%%%%%%%%%%%%%%%%%%%%%%%%%%%%%%%%%%%%%%%%%%%%%%%%%%%%%%%%%%

\section{Preliminaries: Classic 2-microlocal analysis} \label{sec:cl_2ml}

The starting point of \emph{2-microlocal analysis} is the definition of specific functional spaces, called \emph{2-microlocal spaces} and denoted by $C^{\sigma,s'}_{t_0}$ where $\sigma,s'\in\R$ and $t_0\in\R$ is a given point. Actually, as noted in \cite{Herbin.Levy-Vehel(2009)}, the study of stochastic processes regularity mainly focuses on spaces $C^{\sigma,s'}_{t_0}$, where $\sigma\in\ivfo{0,1}$ and $s'\in\R$. In this particular case, a continuous function $f$ belongs to $C^{\sigma,s'}_{t_0}$ if there exist $C>0$, $\rho>0$ and a polynomial $P$ such that for all $u,v\in B(t_0,\rho)$,
\begin{equation} \label{eq:def_2ml_spaces}
  \absb{\pthb{ f(u)-P(u) } - \pthb{ (f(v)-P(v) } } \leq C\abs{u-v}^\sigma \pthb{ \abs{u-t_0}+\abs{v-t_0} }^{-s'}.
\end{equation}
$P$ is not necessarily unique, but as proved in \cite{Echelard(2006)}, the Taylor expansion of $f$ of order $\floor{\sigma-s'}$ at $t_0$ can be chosen (if $\floor{\sigma-s'} > 0$, otherwise $P$ is set to $0$). 

Hence, in many situations, the study can even be restricted to spaces $C^{\sigma,s'}_{t_0}$, where $(s',\sigma)\in\sigma_{0,0} = \brc{ (s',\sigma) : \sigma\in\ivfo{0,1}\text{ and } \sigma-s'\in\ivfo{0,1} }$. In this case, as $P=0$, the previous characterization simply becomes
\[
  \forall u,v\in B(t_0,\rho);\quad \abs{f(u)-f(v)} \leq C\abs{u-v}^\sigma \pthb{ \abs{u-t_0}+\abs{v-t_0} }^{-s'}.
\]

Finally, the definition of 2-microlocal spaces when $\sigma\notin\ivff{0,1}$ is slightly more complex and is given at the end of this section since it is little-used in this article.

%We note that equation \ref{eq:def_2ml_spaces} embraces and extends different concepts which are widely used in the study of regularity. Indeed, if we set $\sigma=0$, we obtain a condition equivalent to 
%\[
%  \forall u\in B(t_0,\rho);\quad \abs{ f(u)-P(u) } \leq C \abs{ u-t_0 }^{-s'},
%\]
%which defines the functional spaces $C^{-s'}_{t_0}$ used in the study of \emph{pointwise regularity} and \emph{multifractal analysis} (see e.g. \cite{Jaffard(1999)}).

%Similarly, if we consider the specific case $s'=0$, the equation obtained corresponds to the following one (since polynomials satisfy the mean value theorem),
%\[
%  \forall u,v\in B(t_0,\rho);\quad \abs{ f(u)-f(v) } \leq C\abs{ u-v }^\sigma.
%\]
%In this instance, we recognize the definition of H\"older spaces in the neighbourhood of $t_0$, which are also largely used in the regularity literature.\vspar

The pointwise H\"older exponent of a function $f$ is characterized as the supremum of the $\alpha$ such that $f$ belongs to $C^\alpha_{t_0}$. Likewise, the \emph{2-microlocal frontier} of $f$ at $t_0$ is defined as the map $s'\mapsto\sigma_{f,t_0}(s')$ such that
\begin{equation} \label{eq:def_2ml_fr}
  \forall s'\in\R;\quad \sigma_{f,t_0}(s') = \sup\brcB{\sigma\in\R : f\in C^{\sigma,s'}_{t_0}}.
\end{equation}
We note equation \ref{eq:def_2ml_spaces} implies that $C^{\sigma,s'}_{t_0} \subset C^{\sigma',s'}_{t_0}$ when $\sigma'\leq\sigma$, and therefore shows the 2-microlocal frontier exists and is well-defined.

It has been proved (see e.g. \cite{Echelard(2006)} the 2-microlocal frontier $s'\mapsto\sigma_{f,t_0}(s')$ satisfies several interesting properties:
\begin{itemize}
  	\item $\sigma_{f,t_0}(\cdot)$ is a concave and non-decreasing function;
  	\item $\sigma_{f,t_0}(\cdot)$ has left and right derivatives between $0$ and $1$.
  \end{itemize}
Furthermore, if $\alpha_{f,t_0}$ and $\widetilde\alpha_{f,t_0}$ respectively denote pointwise and local  H\"older exponents of $f$ at $t_0$, then definition \eqref{eq:def_2ml_spaces} in the particular cases $\sigma=0$ and $s'=0$ implies
\begin{itemize}
  	\item $\alpha_{f,t_0} = -\inf\brc{s' : \sigma_{f,t_0}(s')\geq 0}$;
  	\item $\widetilde\alpha_{f,t_0} = \sigma_{f,t_0}(0)$
\end{itemize}
with the convention $\alpha_{f,t_0} = +\infty$ if $\sigma_{f,t_0}$ is strictly positive. We note there exist other regularity exponents, like chirp, weak and oscillation exponents introduced in \cite{Arneodo.Bacry.ea(1998)} and \cite{Meyer(1998)} that can be retrieved from the 2-microlocal frontier (see \cite{Echelard(2006)} for an exhaustive list).\vspar

As an example, we consider the Chirp function $f:t\mapsto\abs{t}^\alpha \sin\pthb{\abs{t}^{-\beta}}$ introduced previously.  Its 2-microlocal frontier at $t_0=0$ (see Figure \ref{fig:2ml_chirp}) is
\[
  \forall s'\in\R;\quad \sigma_{f,0}(s') = \frac{s'+\alpha}{1+\beta}.
\]

Stochastic instances can also be exhibited. Indeed, let simply consider the stochastic process $X:t\mapsto B^2_t$ where $B$ is a Brownian motion. Based on results from \cite{Herbin.Levy-Vehel(2009)}, we observe that $X$ almost surely has the following 2-microlocal frontier at $t_0=0$
\[
   \forall s'\in\R;\quad \sigma_{X,0}(s') = \pthb{1+s'}\wedge\frac{1}{2},
\]

\begin{figure}[!ht]
  \subfloat[Chirp function $f:x\mapsto x\sin\pth{x^{-1}}$]{\label{fig:2ml_chirp}\includegraphics[width=0.49\textwidth]{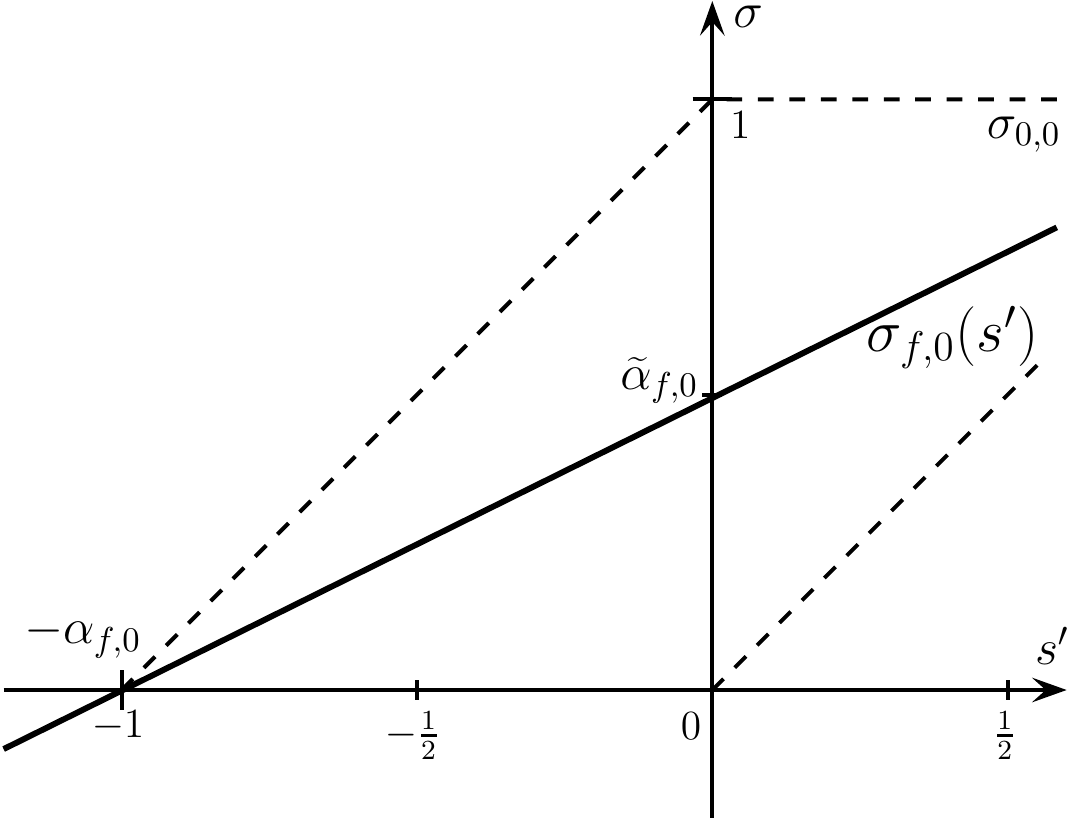}}
  \hfill
  \subfloat[Square of Brownian motion $X:t\mapsto B^2_t$]{\label{fig:2ml_sqbm}\includegraphics[width=0.49\textwidth]{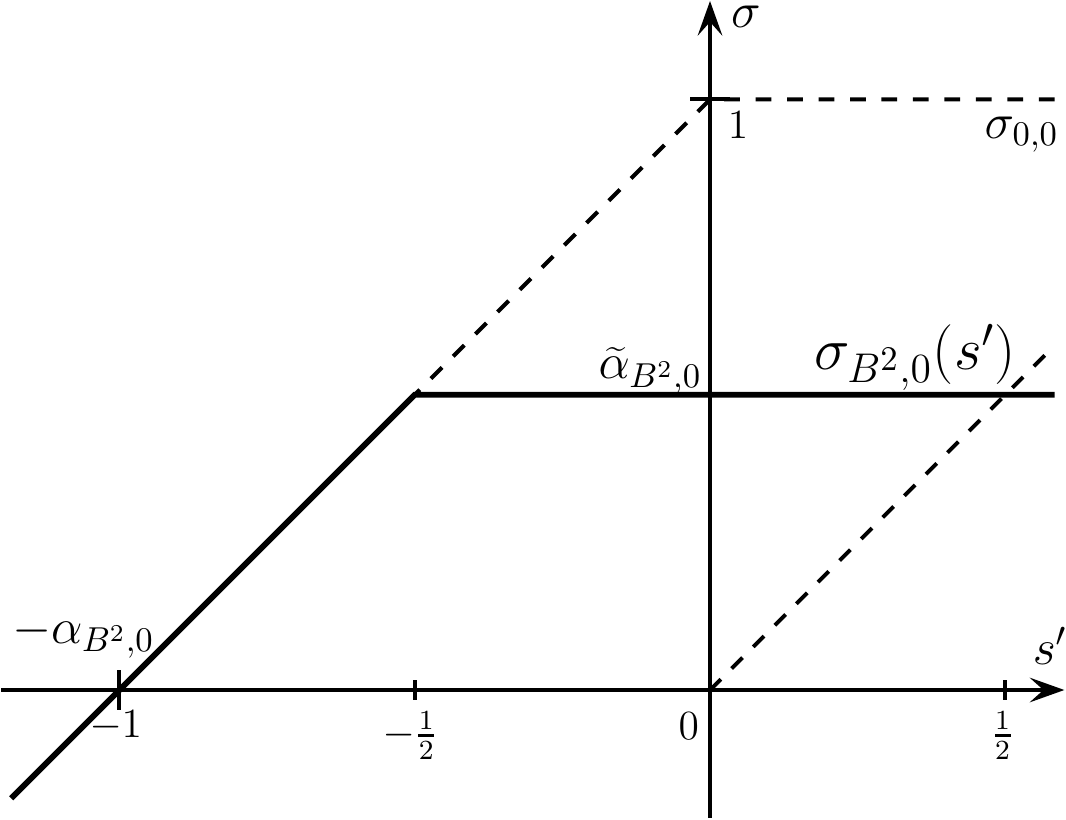}}
  \caption{Examples of 2-microlocal frontiers at $t_0=0$}
  \label{fig:2ml_frontiers}
\end{figure}

These two examples clearly illustrate the fact that pointwise and local H\"older exponents are not sufficient to describe entirely the local regularity of a function, whereas the complete characterization of the 2-microlocal frontier gives a wider insight.

In particular, the knowledge of the 2-microlocal frontier allows to predict the evolution of regularity if a (pseudo-)differential operator is applied to $f$. Let first recall the definition of the fractional integral of order $\alpha\in\R_+$ for a continuous function $f$:
\[
  I^\alpha_{x+}f : t\mapsto \frac{1}{\Gamma(\alpha)} \int_x^t (t-u)^{\alpha-1} f(u)\, \dt u,
\]
where $x\in\R$ is a fixed point. For every $\alpha\in\R_+$, the regularity of $I^\alpha_{x+}f$ satisfies
\[
  \forall s'\in\R;\quad \sigma_{I^\alpha_{x+}f, t_0} (s') = \sigma_{f, t_0}(s') + \alpha,
\]
for all $t_0>x$. Similarly, the function $g_\alpha:t\mapsto \abs{t-t_0}^\gamma (f(t)-f(t_0))$ has its 2-microlocal frontier at $t_0$ equal to
\[
  s'\longmapsto \sigma_{g_\alpha, t_0} (s') = \sigma_{f, t_0}(s'+\alpha).
\]

For sake of completeness, we conclude this section with the definition of 2-microlocal spaces when $\sigma\notin\ivfo{0,1}$. 

For all $\sigma\leq 0$ and $s'\in\R$, a continuous function $f$ is said to belong to $C^{\sigma,s'}_{t_0}$ if there exist $C>0$, $\rho>0$ and a polynomial $P$ such that for all $u,v\in B(t_0,\rho)$
\begin{equation} \label{eq:def_2ml_spaces2}
  \absb{\pthb{ I^{m}_{x+}f(u)-P(u) } - \pthb{ (I^{m}_{x+}f(v)-P(v) } } \leq C\abs{u-v}^{\sigma+m} \pthb{ \abs{u-t_0}+\abs{v-t_0} }^{-s'},
\end{equation}
where $x$ is a fixed point such that $x < t_0$ and $m=-\floor{\sigma}$. 

Similarly, for all $\sigma\geq 1$ and $s'\in\R$, a continuous function $f$ is said to belong to $C^{\sigma,s'}_{t_0}$ if $f$ is differentiable of order $\floor{\sigma}$ around $t_0$ and if there exist $C>0$, $\rho>0$ and a polynomial $P$ such that for all $u,v\in B(t_0,\rho)$
\begin{equation} \label{eq:def_2ml_spaces3}
  \absbb{ \frac{f^{(m)}(u) - P(u)}{\abs{u-t_0}^{\floor{s}-m}} - \frac{f^{(m)}(v) - P(v)}{\abs{v-t_0}^{\floor{s}-m}} } \leq C \abs{u-v}^{\sigma-m} \pthb{ \abs{u-t_0} + \abs{v-t_0} }^{-s'-\floor{s}+m},
\end{equation}
where $s=\sigma-s'$ and $m=\floor{\sigma}$. This last definition is not used through the article since stochastic processes studied are usually almost nowhere differentiable.

The characterization of 2-microlocal spaces given by equations \eqref{eq:def_2ml_spaces}, \eqref{eq:def_2ml_spaces2} and \eqref{eq:def_2ml_spaces3} has been first introduced in \cite{Kolwankar.Vehel(2002)} and then developed in \cite{Seuret.Vehel(2003)} and \cite{Echelard(2006)}. Equivalent definitions based Fourier (\cite{Bony(1986)}) or wavelet (\cite{Jaffard(1991)}, \cite{Meyer(1998)}) transforms have been studied in the literature. Finally, we note that in the case $\sigma\in\N$ or $s\in\N$, our characterization is slightly different from the classical one given in \cite{Bony(1986)}, but this does not affect results developed in this article since the 2-microlocal frontier is not sensitive to this singularity.

%%%%%%%%%%%%%%%%%%%%%%%%%%%%%%%%%%%%%%%%%%%%%%%%%%%%%%%%%%%%%%%%%%%%%%%%
%% Pseudo 2-microlocal analysis
%%%%%%%%%%%%%%%%%%%%%%%%%%%%%%%%%%%%%%%%%%%%%%%%%%%%%%%%%%%%%%%%%%%%%%%%

\section{Pseudo 2-microlocal analysis} \label{sec:ps_2ml}

According to definitions \eqref{eq:def_2ml_spaces}, \eqref{eq:def_2ml_spaces2} and \eqref{eq:def_2ml_spaces3}, the simple function $t\mapsto (t-t_0)^k$, $k\in\N$, has a 2-microlocal frontier at $t_0$ equal to $+\infty$, on the contrary to the case $k\in\R\setminus\N$.  Intuitively, we understand that 2-microlocal analysis does not take into account the polynomial component of a function around $t_0$. Nevertheless, it might sometimes be necessary and interesting to consider the regularity of polynomials. Indeed, if we simply consider a Brownian motion $B$, we know that its quadratic variation is $\VQ{B}_t = t$, and therefore, if we want to obtain a result which links up regularities of $B$ and $\VQ{B}$, we need a proper tool to characterize variations of $\VQ{B}$.

The concept of \emph{pseudo 2-microlocal analysis} introduced in this section has this purpose. The first step consists in the definition of \emph{pseudo 2-microlocal spaces} $\psd{C}^{s,s'}_{t_0}$ and \emph{pseudo 2-microlocal frontier} $\Sigma_{f,t_0}$ which are similar to classic spaces $C^{s,s'}_{t_0}$ and frontier $\sigma_{f,t_0}$, but which also consider polynomials in the characterization of regularity. Then, properties of this particular frontier are studied, and in particular Theorem \ref{th:classic_pseudo_frontiers} links up classic and pseudo 2-microlocal frontiers.

%% Pseudo 2-microlocal spaces and frontier %%
\subsection{Pseudo 2-microlocal spaces and frontier}

\begin{definition}[Pseudo 2-microlocal spaces] \label{def:pseudo_spaces}
  \hfill
  \begin{enumindent}
    \item Let $\sigma\in\ivfo{0,+\infty}$, $s'\in\R$ and $t_0\in\R$. A continuous function $f$ is said to belong to $\psd{C}^{\sigma,s'}_{t_0}$ if there exist $C>0$ and $\rho>0$ such that for all $u,v\in B(t_0,\rho)$,
    \[
      \abs{f(u) - f(v)} \leq C\abs{u-v}^\sigma \pthb{ \abs{u-t_0}+\abs{v-t_0} }^{-s'}.
    \]
    
    \item Let $\sigma\in\ivoo{-\infty,0}$, $s'\in\R$ and $t_0\in\R$. A continuous function $f$ is said to belong to $\psd{C}^{\sigma,s'}_{t_0}$ if there exist $C>0$ and $\rho>0$ such that for all $u,v\in B(t_0,\rho)$,
    \[
      \absb{ I^{m}_{t_0+}(f-f(t_0))(u) - I^{m}_{t_0+}(f-f(t_0))(v) } \leq C\abs{u-v}^{\sigma+m} \pthb{ \abs{u-t_0}+\abs{v-t_0} }^{-s'},
    \]
    where $m=-\floor{\sigma}$. 
  \end{enumindent}
\end{definition}

We note this definition differs from the characterizations \eqref{eq:def_2ml_spaces} and \eqref{eq:def_2ml_spaces3} of 2-microlocal spaces in the polynomial component which is subtracted in the classic case. Similarly, the pseudo 2-microlocal frontier $\Sigma_{f,t_0}$ is defined by 
\[
  \forall s'\in\R;\quad \Sigma_{f,t_0}(s') = \sup\brcB{\sigma\in\R : f\in \psd{C}^{\sigma,s'}_{t_0}}.
\]
This concept of \emph{pseudo 2-microlocal frontier} has been first introduced in \cite{Herbin.Levy-Vehel(2009)} in the particular case $(s',\sigma)\in\sigma_{0,0}$. The definition above extends it to the whole 2-microlocal domain.

As a corollary, we also define \emph{pseudo pointwise and local H\"older exponents}, which might differ from classic ones.
\begin{definition} \label{def:pseudo_exp}
  Let $f$ be continuous function and $t_0\in\R$. Pseudo pointwise and local H\"older exponents of $f$ at $t_0$ are respectively defined as
  \[
    \psd{\alpha}_{f,t_0} =  \sup\brcbb{\alpha : \limsup_{\rho\rightarrow 0} \sup_{u,v\in B(t_0,\rho)} \frac{\abs{f(u)-f(v)}}{\rho^{\alpha}} < \infty }
  \]
  and
  \[
    \widetilde{\psd{\alpha}}_{f,t_0} =  \sup\brcbb{\alpha : \limsup_{\rho\rightarrow 0} \sup_{u,v\in B(t_0,\rho)} \frac{\abs{f(u)-f(v)}}{\abs{u-v}^{\alpha}} < \infty }.
  \]
  These coefficients satisfy $\psd{\alpha}_{f,t_0} = -\inf\brc{s':\Sigma_{f,t_0}(s')\geq 0}$ and $\widetilde{\psd{\alpha}}_{f,t_0} = \Sigma_{f,t_0}(0)$.
\end{definition}

We observe that the definitions of classic and pseudo 2-microlocal spaces coincide inside the domain $\sigma_{0,0}$ previously introduced. Therefore, if the graph of one of the frontiers belongs to $\sigma_{0,0}$, both frontiers must coincide. For instance, it is the case with sample paths of a Brownian motion $B$ since it has been proved in \cite{Herbin.Levy-Vehel(2009)} that almost surely for all $t\in\R_+$,
\[
  \forall s'\in\R;\quad \sigma_{B,t}(s') = \pthbb{\frac{1}{2}+s'}\wedge\frac{1}{2} \qquad\Rightarrow\qquad \pthb{ s',\sigma_{B,t}(s') }\in\sigma_{0,0}.
\]
Theorem \ref{th:classic_pseudo_frontiers} completely characterizes the link between classic and pseudo 2-microlocal frontiers.

\begin{example} \label{ex:pseudo_power}
  We now illustrate these concepts on a simple example: $f:x\mapsto \abs{x}^\alpha$, where $\alpha > 0$ . We prove that the pseudo 2-microlocal frontier of $f$ at $0$ is equal to
  \[
    \forall s'\in\R;\quad \Sigma_{f,0}(s') = (\alpha+s')\wedge 1. 
  \]
  
  \begin{enumindent}
    \item To obtain the lower bound, we use a simple result from \cite{Echelard(2006)}: there exist $C>0$ and $\rho>0$ such that for all $u,v\in B(0,\rho)$,
    \[
      \abs{ f(u)- f(v) } \leq C \abs{u-v} \pthb{ \abs{u}+\abs{v} }^{\alpha-1},
    \]
    which implies for every $\sigma\in\ivff{0,1}$,
    \begin{align*}
      \abs{ f(u)- f(v) } 
      &= \abs{ f(u)- f(v) }^\sigma \cdot \abs{ f(u)- f(v) }^{1-\sigma} \\
      &\leq C \abs{u-v}^\sigma \pthb{ \abs{u}+\abs{v} }^{\sigma(\alpha-1)} \cdot \pthb{ \abs{u}+\abs{v} }^{\alpha(1-\sigma)} \\
      &= C \abs{u-v}^\sigma \pthb{ \abs{u}+\abs{v} }^{\alpha-\sigma}.
    \end{align*}
    Thus, for all $s'\in\ivff{-\alpha,1-\alpha}$, $f$ belongs to $\psd{C}^{\sigma,s'}_{0}$ when $\sigma \leq (s'+\alpha)\wedge 1$.
    
    \item On the other side, we note that for all $u\in\R$, $\abs{f(u)-f(0)} = \abs{u}^\alpha = \abs{u}^{\alpha+s'} \abs{u}^{-s'}$, and therefore $\Sigma_{\abs{x}^\alpha,0}(s') \leq \alpha+s'$.
    Furthermore, as $f$ is differentiable, for all $s'\in\R$, $\rho>0$ and $\eps>0$, we know that
    \[
      \sup_{u,v\in B(0,\rho)} \frac{ \abs{f(u) - f(v)} }{ \abs{u-v}^{1+\eps} \pthb{\abs{u}+\abs{v}}^{-s'} } = +\infty,
    \]
    and therefore $\Sigma_{f,0}(s') \leq 1$.
  \end{enumindent}
\end{example}
If we compare classic and pseudo 2-microlocal frontiers of $x\mapsto\abs{x}^\alpha$, we notice that for all $s'\in\R$
\[
  \Sigma_{\abs{x}^\alpha,0}(s') = (\alpha+s')\wedge 1 \quad\text{and}\quad \sigma_{\abs{x}^\alpha,0}(s') = 
  \begin{cases}
    \,\alpha + s' & \text{if } \alpha\in\R_+\setminus\N; \\
    \,+\infty & \text{if } \alpha\in\N.
  \end{cases}
\]
Therefore, we have shown the pseudo 2-microlocal frontier indeed takes into account polynomials and does not make any distinction between integer and non-integers powers in terms of regularity. Furthermore, this example also illustrates the fact that classic and pseudo 2-microlocal frontiers do not coincide in general.

\begin{remark}
  We recalled previously that there exist characterizations of 2-microlocal spaces using Wavelet (or Fourier) transform. Hence, we know from \cite{Jaffard(1991)} that $f$ belongs to $C^{\sigma,s'}_{t_0}$ if and only if
  \[
    \forall j,k\in\Z \text{ s.t. } \abs{t_0-k2^{-j}}\leq 1; \quad \abs{d_{j,k}} \leq C 2^{-j\sigma}\pthb{2^{-j}+\abs{k2^{-j}-t_0}}^{-s'},
  \]
  where $N > \max(\sigma,\sigma-s')$, $d_{j,k} = 2^j\scpr{f}{\psi(2^jx - k)}$, $\psi\in\Si(\R)$ has $N$ vanishing moments and is such that $\brcb{\psi_{j,k} = 2^{j/2}\psi\pth{2^jx-k} }_{(j,k)\in\Z^2}$ forms an orthonormal basis of $L^2(\R)$.
  
  Therefore, a natural question is to wonder if this characterization can be adapt to pseudo 2-microlocal spaces. In fact, a simple calculation proves that if we replace the vanishing moments hypothesis by
  \[
    \forall k\in\brcb{ 1,\dotsc,N}; \quad \scpr{x^k}{\psi} \neq 0,
  \]
  where as previously $N > \max(\sigma,\sigma-s')$, then this definition becomes a characterization of pseudo 2-microlocal spaces.
\end{remark}

%% Properties %%
\subsection{Properties}

In this second part, we prove a few important results related to the pseudo 2-microlocal frontier and which are useful later in the article.
The main following theorem gives a general formula which links up classic and pseudo 2-microlocal frontiers for continuous functions.
\begin{theorem} \label{th:classic_pseudo_frontiers}
  Let $f$ be a continuous function and $t_0$ be in \R. We define $p_{f,t_0}$ as the integer
  \[
    p_{f,t_0} = \inf\brcb{n\geq 1 : f^{(n)}(t_0)\text{ exists and }f^{(n)}(t_0)\neq 0},
  \]
  with the usual convention $\inf\brc{\emptyset} = +\infty$.
  
  Then, the pseudo 2-microlocal frontier of $f$ at $t_0$ is equal to
  \[
    \forall s'\in\R ;\quad \Sigma_{f,t_0}(s') = \sigma_{f,t_0}(s')\wedge\pth{s'+p_{f,t_0}}\wedge 1,
  \]
  unless $f$ is locally constant at $t_0$, which implies in that specific case:
  \[
    \forall s'\in\R ;\quad \Sigma_{f,t_0}(s') = \sigma_{f,t_0}(s') = +\infty.
  \]
\end{theorem}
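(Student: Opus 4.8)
The plan is to first dispose of the exceptional locally constant case, then to obtain the formula by matching three upper bounds against a lower bound, reducing the fractional regime $\sigma<0$ to the core regime $\sigma\in\ivfo{0,1}$. If $f$ is constant on some ball $B(t_0,\rho)$, then $\abs{f(u)-f(v)}=0$ there, so $f\in\psd{C}^{\sigma,s'}_{t_0}$ for every $\sigma$ and $\Sigma_{f,t_0}\equiv+\infty$, while the Taylor polynomial of $f$ is constant and $\sigma_{f,t_0}\equiv+\infty$; this is the stated exception. Otherwise set $p=p_{f,t_0}\in\{1,2,\dots\}\cup\{+\infty\}$. By minimality of $p$, every derivative $f^{(n)}(t_0)$ that exists for $n<p$ vanishes, so the Taylor polynomial of $f$ at $t_0$ of any order $\leq p-1$ reduces to the constant $f(t_0)$ — this observation drives the whole proof.

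Next I would prove the three upper bounds, whose minimum is the claimed value. Comparing the definitions, the pseudo inequality is the classic inequality \eqref{eq:def_2ml_spaces} with the choice $P\equiv 0$, hence $\psd{C}^{\sigma,s'}_{t_0}\subseteq C^{\sigma,s'}_{t_0}$ and $\Sigma_{f,t_0}\leq\sigma_{f,t_0}$. Setting $v=t_0$ in Definition \ref{def:pseudo_spaces} gives $\abs{f(u)-f(t_0)}\leq C\abs{u-t_0}^{\sigma-s'}$, which together with the Taylor lower bound $\abs{f(u)-f(t_0)}\geq c\abs{u-t_0}^{p}$ (valid near $t_0$ when $p<\infty$) forces $\sigma-s'\leq p$ as $u\to t_0$, i.e. $\Sigma_{f,t_0}(s')\leq s'+p$. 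Finally, if $f\in\psd{C}^{\sigma,s'}_{t_0}$ for some $\sigma>1$, then restricting $u,v$ to a small neighborhood of a fixed $w\neq t_0$ in $B(t_0,\rho)$ keeps $\pthb{\abs{u-t_0}+\abs{v-t_0}}^{-s'}$ between two positive constants, so $\abs{f(u)-f(v)}\leq C'\abs{u-v}^{\sigma}$ with $\sigma>1$; this makes $f$ constant on each side of $t_0$ and, by continuity, on all of $B(t_0,\rho)$, contradicting our assumption. Hence $\Sigma_{f,t_0}(s')\leq 1$.

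For the lower bound I would fix $\sigma<\sigma_{f,t_0}(s')\wedge(s'+p)\wedge 1$ and show $f\in\psd{C}^{\sigma,s'}_{t_0}$. When the target is nonnegative this can be done in the core regime $\sigma\in\ivfo{0,1}$, and this is where the argument is cleanest: since $\sigma<\sigma_{f,t_0}(s')$ we have $f\in C^{\sigma,s'}_{t_0}$, realised in \eqref{eq:def_2ml_spaces} by the Taylor polynomial $P$ of order $\floor{\sigma-s'}$; but $\sigma<s'+p$ gives $\floor{\sigma-s'}\leq p-1$, so by the first paragraph $P\equiv f(t_0)$, whence $P(u)-P(v)=0$ and the classic inequality is literally the pseudo inequality of Definition \ref{def:pseudo_spaces}. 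Taking the supremum over such $\sigma$ yields $\Sigma_{f,t_0}(s')\geq\sigma_{f,t_0}(s')\wedge(s'+p)\wedge 1$ on the set where the right-hand side is nonnegative.

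The main obstacle is the remaining regime where the target is negative, so that Definition \ref{def:pseudo_spaces} is phrased through the fractional primitive $g:=I^{m}_{t_0+}(f-f(t_0))$ with $m=-\floor{\sigma}\geq 1$. Here I would reduce to the core case applied to $g$: directly from the definitions $f\in\psd{C}^{\sigma,s'}_{t_0}\iff g\in\psd{C}^{\sigma+m,s'}_{t_0}$ with $\sigma+m\in\ivfo{0,1}$, so $\Sigma_{f,t_0}(s')=\Sigma_{g,t_0}(s')-m$. It then remains to identify the invariants of $g$, and this is the genuinely technical step: comparing $I^{m}_{t_0+}(f-f(t_0))$ with $I^{m}_{x+}f$ shows the two differ by a polynomial of degree $\leq m$, whence $\sigma_{g,t_0}=\sigma_{f,t_0}+m$, while a Beta-integral asymptotic $g(t)\sim c\,(t-t_0)^{p+m}$ near $t_0$ gives $p_{g,t_0}=p+m$. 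Inserting these into the core formula for $g$ gives $\Sigma_{g,t_0}(s')=(\sigma_{f,t_0}(s')+m)\wedge(s'+p+m)\wedge 1$, and subtracting $m$ returns $\sigma_{f,t_0}(s')\wedge(s'+p)\wedge(1-m)$; since the value is negative in this regime the cap $1-m$ is inactive and coincides with $\sigma_{f,t_0}(s')\wedge(s'+p)\wedge 1$. The points requiring care are the polynomial-difference identity between the two fractional primitives, the order-$(p+m)$ vanishing of $g$, and the bookkeeping when $p=+\infty$.
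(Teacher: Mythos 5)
Your proof is correct and shares the paper's overall skeleton: dispose of the locally constant case, prove the cap $\Sigma_{f,t_0}\leq 1$ by the same ``zero derivative away from $t_0$'' contradiction, obtain the lower bound from the fact that the Taylor polynomial realising $f\in C^{\sigma,s'}_{t_0}$ reduces to the constant $f(t_0)$ once its order is $\leq p_{f,t_0}-1$, and push the negative regime through the fractional primitive. Where you genuinely depart from the paper is the converse inequality: the paper runs a trichotomy on whether $\sigma_{f,t_0}(s')$ is greater than, less than, or equal to $s'+p_{f,t_0}$, and needs the two-sided estimates on the increments $\absb{(u-t_0)^k-(v-t_0)^k}$ to transfer a $\limsup$ from $P$ to $f$; you instead prove the two bounds $\Sigma_{f,t_0}\leq\sigma_{f,t_0}$ (from the inclusion $\psd{C}^{\sigma,s'}_{t_0}\subseteq C^{\sigma,s'}_{t_0}$ with $P=0$) and $\Sigma_{f,t_0}(s')\leq s'+p_{f,t_0}$ (by setting $v=t_0$ and invoking the Peano form of Taylor's theorem at $t_0$) independently, which is more elementary and removes the case analysis. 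Two small points to watch: the inclusion $\psd{C}^{\sigma,s'}_{t_0}\subseteq C^{\sigma,s'}_{t_0}$ is only literal for $\sigma\in\ivfo{0,1}$ --- for $\sigma<0$ the two definitions integrate from different base points, so you must absorb the resulting polynomial of degree at most $m$ into the $P$ of \eqref{eq:def_2ml_spaces2} --- which is harmless once $\Sigma_{f,t_0}\leq 1$ is in hand; and in the negative regime the selection of the correct band for $m$ and the monotonicity of the pseudo spaces across bands deserve a sentence, though the paper's own final step is no more detailed than yours on this point.
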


We note that this theorem is consistent with our previous calculation of the pseudo frontier of $x\mapsto\abs{x}^\alpha$ in Example \ref{ex:pseudo_power}. Furthermore, properties on the map $s'\mapsto\Sigma_{f,t_0}(s')$ can be deduced as a simple corollary.
\begin{corollary} \label{cor:pseudo_properties}
  Similarly to the classic frontier, the pseudo 2-microlocal frontier of $f$ at $t_0$ satisfies
  \begin{itemize}
  	\item $\Sigma_{f,t_0}$ is a concave and non-decreasing function;
  	\item $\Sigma_{f,t_0}$ has left and right derivatives between $0$ and $1$.
  \end{itemize}
\end{corollary}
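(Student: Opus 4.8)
The plan is to deduce everything from the explicit formula of Theorem \ref{th:classic_pseudo_frontiers}, which off the locally constant case expresses the pseudo frontier as the pointwise minimum of three elementary functions,
\[
  \Sigma_{f,t_0}(s') = \sigma_{f,t_0}(s') \wedge \pth{s' + p_{f,t_0}} \wedge 1,
\]
so that it suffices to check that each asserted property is stable under taking a minimum. First I would record the properties of the three building blocks. The classic frontier $s'\mapsto\sigma_{f,t_0}(s')$ is concave, non-decreasing, with one-sided derivatives in $[0,1]$, by the properties recalled in Section \ref{sec:cl_2ml}; the affine map $s'\mapsto s'+p_{f,t_0}$ is concave, non-decreasing, with constant slope $1$; and the constant map $s'\mapsto 1$ is concave and non-decreasing with slope $0$. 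The locally constant case needs no argument, since there $\Sigma_{f,t_0}\equiv+\infty$ satisfies the statement trivially as a constant function.

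Next I would invoke the two standard stability facts. The pointwise minimum of concave functions is concave: writing $\Sigma_{f,t_0}=g_1\wedge g_2\wedge g_3$, for any $s'_1,s'_2$ and $\lambda\in[0,1]$ each block obeys $g_i\pth{\lambda s'_1+(1-\lambda)s'_2}\geq \lambda g_i(s'_1)+(1-\lambda)g_i(s'_2)\geq \lambda\Sigma_{f,t_0}(s'_1)+(1-\lambda)\Sigma_{f,t_0}(s'_2)$, and taking the minimum over $i$ on the left (the right-hand side being independent of $i$) yields concavity. The minimum of non-decreasing functions is obviously non-decreasing. Concavity then guarantees the existence of left and right derivatives everywhere in the interior of the domain, and monotonicity forces both to be $\geq 0$.

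The remaining, and only delicate, point is that both one-sided derivatives are $\leq 1$. Since each of the three blocks is $1$-Lipschitz (a non-decreasing function with one-sided derivatives at most $1$ is $1$-Lipschitz), their minimum $\Sigma_{f,t_0}$ is $1$-Lipschitz, so every difference quotient, and hence every one-sided derivative, lies in $[0,1]$. I expect the main obstacle to be making this airtight where the value $+\infty$ intervenes: the term $s'+p_{f,t_0}$ is infinite when $p_{f,t_0}=+\infty$, and $\sigma_{f,t_0}$ may itself equal $+\infty$ at finite $s'$, as for a nonconstant polynomial. To resolve this I would use that the constant block $1$ keeps $\Sigma_{f,t_0}\leq 1$ finite away from the locally constant case, so that at each point the minimizing block is genuinely finite and $1$-Lipschitz on a neighbourhood; the increment of $\Sigma_{f,t_0}$ over an interval is then dominated by that of the block active at the left endpoint, and for the left derivative one passes to the limit using that, for a concave function, it is the decreasing limit from the left of the right derivatives, each of which the active-block argument bounds by $1$.
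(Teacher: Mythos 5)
Your proposal is correct and follows exactly the route the paper intends: the paper offers no separate argument for this corollary beyond the remark that it "can be deduced as a simple corollary" of Theorem \ref{th:classic_pseudo_frontiers}, and your deduction from the formula $\Sigma_{f,t_0}=\sigma_{f,t_0}\wedge(s'+p_{f,t_0})\wedge 1$ via stability of concavity, monotonicity and the $1$-Lipschitz bound under pointwise minima is precisely that deduction. Your careful handling of the $+\infty$ cases (locally constant $f$, infinite $p_{f,t_0}$, infinite classic frontier) is a welcome addition that the paper leaves implicit.
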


Using the previous theorem, we can also obtain a useful result which illustrates the behaviour of the pseudo 2-microlocal frontier when a function is integrated.
\begin{theorem} \label{th:int_pseudo_frontier}
  Let $f$ be a continuous function and $F$ be
  \[
    \forall t\in\R;\quad F(t) = \int_0^t f(s) \,\dt s.
  \]
  Then, for any $t_0$ in \R, the pseudo 2-microlocal frontier of $F$ at $t_0$ is equal to:
  \[
    \forall s'\in\R;\quad \Sigma_{F,t_0}(s') = 
    \begin{cases}
      \pthb{1+s'} \wedge 1 & \text{if } f(t_0)\neq 0; \\
      \pthb{ \Sigma_{f,t_0}(s')+1 } \wedge 1 & \text{if } f(t_0)=0 \text{ and is not locally constant}; \\
      +\infty & \text{if $f$ is locally equal to 0 at } t_0,
    \end{cases}
  \]
\end{theorem}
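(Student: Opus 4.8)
The plan is to reduce everything to Theorem \ref{th:classic_pseudo_frontiers}, which expresses $\Sigma_{F,t_0}$ in terms of the classic frontier $\sigma_{F,t_0}$ and the order $p_{F,t_0}$ of the first non-vanishing derivative of $F$ at $t_0$. Since $f$ is continuous, the fundamental theorem of calculus gives $F\in C^1$ with $F'=f$, and more generally $F^{(n)}(t_0)=f^{(n-1)}(t_0)$ whenever these derivatives exist. The two ingredients I need are therefore the classic frontier of $F$ and the integer $p_{F,t_0}$, both of which I can read off from $f$.

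For the classic frontier, I would observe that for any $x<t_0$ the function $F$ differs from the fractional integral $I^1_{x+}f$ only by an additive constant; since neither the classic nor the pseudo frontier sees additive constants (both depend only on the increments $F(u)-F(v)$), the fractional integration rule recalled in Section \ref{sec:cl_2ml} applies and yields $\sigma_{F,t_0}(s')=\sigma_{f,t_0}(s')+1$ for every $s'\in\R$. Next I would determine $p_{F,t_0}$ via $F^{(n)}(t_0)=f^{(n-1)}(t_0)$. If $f(t_0)\neq0$ then $F'(t_0)=f(t_0)\neq0$, so $p_{F,t_0}=1$, and Theorem \ref{th:classic_pseudo_frontiers} gives $\Sigma_{F,t_0}(s')=(\sigma_{f,t_0}(s')+1)\wedge(s'+1)\wedge1$. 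If $f(t_0)=0$ but $f$ is not locally $0$, then $F'(t_0)=0$ while $F$ is not locally constant (otherwise $f=F'=0$ near $t_0$), and the order of the first non-zero derivative shifts by one: $p_{F,t_0}=p_{f,t_0}+1$, with the convention $+\infty+1=+\infty$. Substituting this into Theorem \ref{th:classic_pseudo_frontiers} and inserting the same theorem applied to $f$ turns $(\sigma_{f,t_0}(s')+1)\wedge(s'+p_{f,t_0}+1)\wedge1$ into exactly $(\Sigma_{f,t_0}(s')+1)\wedge1$. Finally, if $f$ is locally $0$ at $t_0$ then $F$ is locally constant, so the exceptional branch of Theorem \ref{th:classic_pseudo_frontiers} gives $\Sigma_{F,t_0}\equiv+\infty$.

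The one step that requires genuine care is the simplification in the case $f(t_0)\neq0$: I must show that the term $\sigma_{f,t_0}(s')+1$ never binds, i.e. that $(s'+1)\wedge1\le\sigma_{f,t_0}(s')+1$, equivalently $\sigma_{f,t_0}(s')\ge s'\wedge0$. This is where the qualitative shape of the classic frontier enters. Because $f$ is continuous it is locally bounded, hence $\sigma_{f,t_0}(0)=\widetilde\alpha_{f,t_0}\ge0$; and since $\sigma_{f,t_0}$ is concave, non-decreasing and has slopes in $[0,1]$, one gets $\sigma_{f,t_0}(s')\ge\sigma_{f,t_0}(0)\ge0$ for $s'\ge0$, and $\sigma_{f,t_0}(s')\ge\sigma_{f,t_0}(0)+s'\ge s'$ for $s'\le0$. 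Together these give $\sigma_{f,t_0}(s')\ge s'\wedge0$, whence $\Sigma_{F,t_0}(s')=(1+s')\wedge1$, completing that case.

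I expect this domination argument, together with the correct bookkeeping of $p_{F,t_0}$ through $F^{(n)}(t_0)=f^{(n-1)}(t_0)$, to be the only non-routine parts; the classic-frontier computation and the treatment of the locally-constant case are immediate consequences of the fractional integration rule and of Theorem \ref{th:classic_pseudo_frontiers} itself. As a sanity check I would verify the formula on $f(t)=1$ (giving $F(t)=t$ and $\Sigma_{F,t_0}(s')=(1+s')\wedge1$) and on $f(t)=t$ at $t_0=0$ (giving $F(t)=t^2/2$, $p_{F,0}=2$ and $\Sigma_{F,0}(s')=(s'+2)\wedge1$), both of which are consistent with the three branches of the statement.
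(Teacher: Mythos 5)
Your proposal is correct, and for the two substantive branches ($f(t_0)=0$ not locally constant, and $f$ locally zero) it is exactly the paper's argument: the fractional-integration rule $\sigma_{F,t_0}=\sigma_{f,t_0}+1$, the bookkeeping $p_{F,t_0}=p_{f,t_0}+1$, and substitution into Theorem \ref{th:classic_pseudo_frontiers} applied to both $F$ and $f$. The only divergence is the case $f(t_0)\neq 0$: the paper does not go through Theorem \ref{th:classic_pseudo_frontiers} at all there, but instead uses continuity and $f(t_0)\neq 0$ to get a two-sided bound $C_1\abs{u-v}\leq\abs{F(u)-F(v)}\leq C_2\abs{u-v}$ near $t_0$, so that $F$ locally behaves like the identity and the pseudo frontier is read off directly. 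Your route instead sets $p_{F,t_0}=1$ and then must prove that the term $\sigma_{f,t_0}(s')+1$ never realizes the minimum; your domination lemma $\sigma_{f,t_0}(s')\geq s'\wedge 0$, deduced from $\sigma_{f,t_0}(0)\geq 0$ (local boundedness) together with concavity, monotonicity and slopes in $\ivff{0,1}$, is valid and closes that gap. The paper's version of this case is more elementary and self-contained; yours has the small advantage of treating all three branches uniformly through the single reduction to Theorem \ref{th:classic_pseudo_frontiers}, at the cost of invoking the qualitative shape properties of the classic frontier.
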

For sake of readability, technical proofs of Theorems \ref{th:classic_pseudo_frontiers} and \ref{th:int_pseudo_frontier} are given in Appendix \ref{sec:appendix}.

%As an example of application of Theorem \ref{th:classic_pseudo_frontiers}, we deduce the pseudo 2-microlocal frontier of the "chirp" function introduced previously.
%\begin{example}
%  Let $f$ be a "chirp" function $f:t\mapsto \abs{t - t_0}^\alpha \sin\pthb{ \abs{t - t_0}^{-\beta} }$, with $\alpha,\beta > 0$. Then, the pseudo 2-microlocal frontier of $f$ at $t_0$ is equal to:
%  \[
%    \forall s'\in\R;\quad \Sigma_{f,t_0}(s') = \sigma_{f,t_0}(s')\wedge 1 = \frac{\alpha + s'}{1+\beta} \wedge 1.
%  \]
%  The proof simply relies on Theorem \ref{th:classic_pseudo_frontiers}, as we verify that $p_{f,t_0} = +\infty$ for every $\alpha,\beta$ ($f$'s derivatives of any order at $t_0$ are always null when they exist).
%\end{example}

To end this section, we establish a result on the pseudo 2-microlocal frontier of composed functions, which is necessary in Section \ref{sec:2ml_mg} to determine the regularity of martingales.
\begin{proposition} \label{prop:pseudo_2ml_time_change}
  Let $f$ and $g$ be two continuous functions and let $h$ be the composition $g \circ f$. Then, for every $t\in\R$, the pseudo 2-microlocal frontier $\Sigma_{h,t}$ of $h$ at $t$ satisfies the inequality
  \[
    \Sigma_{h,t}\pthb{ s'_f \cdot \Sigma_{g,f(t)}(s'_g) + s'_{g} \cdot \psd{\alpha}_{f,t} } \geq \Sigma_{g,f(t)}(s'_g) \cdot \Sigma_{f,t}(s'_f),
  \]
  for all $s'_f \in \ivfo{-\psd{\alpha}_{f,t},+\infty}$ and $s'_g \in [-\psd{\alpha}_{g,f(t)},0]$.
\end{proposition}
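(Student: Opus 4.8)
The plan is to prove the inequality by producing, for every admissible choice of approximating exponents, an explicit membership of $h$ in a pseudo 2-microlocal space, and then passing to the limit. Write $y_0 = f(t)$, $\sigma_f = \Sigma_{f,t}(s'_f)$ and $\sigma_g = \Sigma_{g,y_0}(s'_g)$. The hypotheses $s'_f \geq -\psd{\alpha}_{f,t}$ and $s'_g \geq -\psd{\alpha}_{g,y_0}$ guarantee, via Definition \ref{def:pseudo_exp} and the monotonicity of the frontier, that $\sigma_f \geq 0$ and $\sigma_g \geq 0$, which keeps us throughout in the regime covered by the first item of Definition \ref{def:pseudo_spaces}. (If $h$, $f$ or $g$ is locally constant, the corresponding frontier is $+\infty$ and the inequality is immediate, so we may assume none of them is.) Fix then $\sigma_f^- \in (0,\sigma_f)$, $\sigma_g^- \in (0,\sigma_g)$ and $\alpha^- \in (0,\psd{\alpha}_{f,t})$, chosen arbitrarily close to $\sigma_f$, $\sigma_g$ and $\psd{\alpha}_{f,t}$.

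The core is a single chain of estimates valid for $u,v$ in a sufficiently small ball $B(t,\rho)$. Since $f$ is continuous at $t$, the values $f(u)$ and $f(v)$ lie in any prescribed neighbourhood of $y_0$ once $\rho$ is small, so the membership $g\in\psd{C}^{\sigma_g^-,s'_g}_{y_0}$ (valid because $\sigma_g^- < \sigma_g$) applies with $y=f(u)$, $w=f(v)$ and yields
\[
  \abs{h(u) - h(v)} \leq C_g \, \abs{f(u)-f(v)}^{\sigma_g^-} \pthb{ \abs{f(u)-y_0} + \abs{f(v)-y_0} }^{-s'_g}.
\]
I then bound the two factors separately. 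The definition of $\sigma_f = \Sigma_{f,t}(s'_f)$ gives $\abs{f(u)-f(v)} \leq C_f \abs{u-v}^{\sigma_f^-} \pthb{ \abs{u-t}+\abs{v-t} }^{-s'_f}$, and raising this to the nonnegative power $\sigma_g^-$ is harmless. For the second factor the key point is that $s'_g \leq 0$: the exponent $-s'_g$ is nonnegative, so it suffices to bound $\abs{f(u)-y_0}+\abs{f(v)-y_0}$ from \emph{above}, which the pseudo pointwise exponent supplies through $\abs{f(u)-y_0} \leq C\abs{u-t}^{\alpha^-}$ together with $\abs{u-t}^{\alpha^-} \leq \pthb{\abs{u-t}+\abs{v-t}}^{\alpha^-}$. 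Collecting the three bounds gives, for a constant $C'$ depending only on the chosen exponents,
\[
  \abs{h(u)-h(v)} \leq C' \abs{u-v}^{\sigma_f^- \sigma_g^-} \pthb{ \abs{u-t}+\abs{v-t} }^{-s'_f\sigma_g^- - \alpha^- s'_g},
\]
which is exactly the statement that $h \in \psd{C}^{\sigma_f^-\sigma_g^-,\, s'_f\sigma_g^- + \alpha^- s'_g}_t$, and hence $\Sigma_{h,t}\pthb{ s'_f\sigma_g^- + \alpha^- s'_g } \geq \sigma_f^-\sigma_g^-$.

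It remains to let $\sigma_f^- \uparrow \sigma_f$, $\sigma_g^- \uparrow \sigma_g$ and $\alpha^- \uparrow \psd{\alpha}_{f,t}$. The arguments $s'_f\sigma_g^- + \alpha^- s'_g$ converge to $s'_f \sigma_g + \psd{\alpha}_{f,t}\, s'_g$, while the lower bounds $\sigma_f^-\sigma_g^-$ converge to $\sigma_f\sigma_g$. Because $\Sigma_{h,t}$ is concave (Corollary \ref{cor:pseudo_properties}) and finite, it is continuous, so passing to the limit in the displayed inequality yields $\Sigma_{h,t}\pthb{ s'_f\sigma_g + \psd{\alpha}_{f,t}\, s'_g } \geq \sigma_f\sigma_g$, which is the claim.

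I expect the main obstacle to be this final limiting step rather than the estimate itself: one must check that the moving argument of $\Sigma_{h,t}$ actually converges to the target point and invoke continuity of the frontier there, being careful about the sign of $s'_g$ (which governs the direction of approach) and about the degenerate boundary cases $\sigma_g=0$ or local constancy, where the right-hand side is $0$ or the frontier is infinite. The genuinely structural point to recognise is that the hypothesis $s'_g \leq 0$ is precisely what permits the second factor to be handled by an upper bound on the oscillation of $f$, that is, by the pseudo pointwise exponent $\psd{\alpha}_{f,t}$.
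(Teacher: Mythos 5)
Your proof is correct and follows essentially the same route as the paper's: apply the pseudo 2-microlocal bound for $g$ to the increments of $f$, control the first factor by $\Sigma_{f,t}(s'_f)$ and the second by $\psd{\alpha}_{f,t}$ using $-s'_g\geq 0$, then pass to the limit via continuity of the frontier. The only difference is notational (your $\sigma^{-}$ parameters versus the paper's $\Sigma-\eps$), plus a few explicit remarks on degenerate cases that the paper omits.
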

\begin{proof}
  Let $t\in\R$, $s'_f \geq -\psd{\alpha}_{f,t}$ and $s'_g \in [-\psd{\alpha}_{g,f(t)},0]$.
  For every $\eps > 0$, there exist $C_f,C_g>0$ and $\rho>0$ such for all $u,v\in B(t,\rho)$,
  \[
    \abs{ f(u) - f(v) } \leq C_f \abs{ u-v }^{ \Sigma_{f,t}(s'_f) - \eps } \pthb{ \abs{u-t}+\abs{v-t} }^{ -s'_f }
  \]
  and for all $x,y\in B(f(t),\rho)$
  \[
    \abs{ g(x)-g(y) } \leq C_g \abs{ x-y }^{ \Sigma_{g,f(t)}(s'_g) - \eps } \pthb{ \abs{x-f(t)}+\abs{y-f(t)} }^{ -s'_g }.
  \]  
  Therefore, we obtain
  \begin{align*}
    &\abs{ h(u) - h(v) } \\
    &= \abs{ (g \circ f)(u) - (g \circ f)(v) } \\
    &\leq C_g \abs{ f(u) - f(v) }^{\Sigma_{g,f(t)}(s'_g) - \eps} \pthb{ \abs{f(u) - f(t)} + \abs{f(v) - f(t)} }^{-s'_g} \\
    &\leq C \abs{ u - v }^{(\Sigma_{g,f(t)}(s'_g) - \eps)(\Sigma_{f,t}(s'_f) - \eps)} \pthb{ \abs{u - t} + \abs{v - t} }^{-s'_f\cdot (\Sigma_{g,f(t)}(s'_g) - \eps) - s'_g\cdot (\psd{\alpha}_{f,t}-\eps)},
  \end{align*}
  since $\abs{f(u)-f(t)} \leq C_f \abs{u-t}^{\psd{\alpha}_{f,t}-\eps}$ and $-s'_g \geq 0$.
  Then, using the continuity of the pseudo 2-microlocal frontier, we get the expected inequality.
\end{proof}

Using the previous proposition, we can obtain simpler inequalities, which nevertheless might be less accurate with specific functions $f$ and $g$.
\begin{corollary} \label{cor:pseudo_2ml_time_change}
  Let $f$ and $g$ be two continuous functions and let $h$ be the composition $g \circ f$. Then, for any $x\in\R$, the pseudo 2-microlocal frontier $\Sigma_{h,x}$ of $h$ at $x$ satisfies the inequalities:
  \[
    \forall s'\geq-\psd{\alpha}_{h,x};\quad \Sigma_{h,x}(s') \geq \widetilde{\psd{\alpha}}_{g,f(x)} \cdot \Sigma_{f,x}(s' / \widetilde{\psd{\alpha}}_{g,f(x)} ),
  \]
  and
  \[
    \forall s'\geq-\psd{\alpha}_{h,x};\quad \Sigma_{h,x}(s') \geq \widetilde{\psd{\alpha}}_{f,x} \cdot \Sigma_{g,f(x)}(s' / \psd{\alpha}_{f,x} ).
  \]
  In particular, if we consider the pseudo pointwise and local H\"older exponents, we obtain:
  and thus, in particular of pseudo pointwise exponents:
  \[
    \psd{\alpha}_{h,x} \geq \psd{\alpha}_{g,f(x)} \cdot \psd{\alpha}_{f,x}\qquad\text{and}\qquad \widetilde{\psd{\alpha}}_{h,x} \geq \widetilde{\psd{\alpha}}_{g,f(x)} \cdot \widetilde{\psd{\alpha}}_{f,x}
  \]
\end{corollary}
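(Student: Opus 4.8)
The plan is to derive every assertion of Corollary \ref{cor:pseudo_2ml_time_change} by specializing Proposition \ref{prop:pseudo_2ml_time_change} at well-chosen values of $s'_f$ and $s'_g$, followed by an elementary change of variable. Throughout I would use the two identities of Definition \ref{def:pseudo_exp}, namely $\widetilde{\psd{\alpha}}_{f,x} = \Sigma_{f,x}(0)$ and $\psd{\alpha}_{f,x} = -\inf\brc{s' : \Sigma_{f,x}(s')\geq 0}$ (and their analogues for $g$ and $h$), together with the fact, recorded in Corollary \ref{cor:pseudo_properties}, that each pseudo frontier is non-decreasing and continuous.

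For the first inequality I would set $s'_g = 0$, which is admissible since $0\in[-\psd{\alpha}_{g,f(x)},0]$. Then $\Sigma_{g,f(x)}(0)=\widetilde{\psd{\alpha}}_{g,f(x)}$ and the second summand in the argument of $\Sigma_{h,x}$ vanishes, so Proposition \ref{prop:pseudo_2ml_time_change} reduces to $\Sigma_{h,x}\pthb{ s'_f\,\widetilde{\psd{\alpha}}_{g,f(x)} } \geq \widetilde{\psd{\alpha}}_{g,f(x)}\,\Sigma_{f,x}(s'_f)$. The substitution $s' = s'_f\,\widetilde{\psd{\alpha}}_{g,f(x)}$, i.e.\ $s'_f = s'/\widetilde{\psd{\alpha}}_{g,f(x)}$, gives exactly the claimed bound. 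For the second inequality I would symmetrically set $s'_f=0$ (admissible, as $0\geq-\psd{\alpha}_{f,x}$), so that the argument of $\Sigma_{h,x}$ becomes $s'_g\,\psd{\alpha}_{f,x}$ and the right-hand side becomes $\Sigma_{g,f(x)}(s'_g)\,\widetilde{\psd{\alpha}}_{f,x}$; the change of variable $s' = s'_g\,\psd{\alpha}_{f,x}$ then produces the stated form.

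The two exponent inequalities follow by evaluating at the endpoints of the admissible ranges. Taking $s'_f=-\psd{\alpha}_{f,x}$ and $s'_g=-\psd{\alpha}_{g,f(x)}$ makes both factors $\Sigma_{f,x}(s'_f)$ and $\Sigma_{g,f(x)}(s'_g)$ equal to $0$ (by the characterization of $\psd{\alpha}$ and continuity of the frontiers), while the argument of $\Sigma_{h,x}$ becomes $-\psd{\alpha}_{g,f(x)}\,\psd{\alpha}_{f,x}$; hence $\Sigma_{h,x}\pthb{-\psd{\alpha}_{g,f(x)}\,\psd{\alpha}_{f,x}}\geq 0$, and monotonicity together with $\psd{\alpha}_{h,x} = -\inf\brc{s':\Sigma_{h,x}(s')\geq 0}$ yields $\psd{\alpha}_{h,x}\geq\psd{\alpha}_{g,f(x)}\,\psd{\alpha}_{f,x}$. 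Taking instead $s'_f=s'_g=0$ gives $\Sigma_{h,x}(0)\geq\widetilde{\psd{\alpha}}_{g,f(x)}\,\widetilde{\psd{\alpha}}_{f,x}$, which is the local-exponent inequality (and also follows from the first displayed inequality at $s'=0$).

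I expect the one genuinely delicate point to be the \emph{range} of validity. Each substitution is licensed only for $s'_f$, $s'_g$ in the intervals of the proposition, so a direct reading establishes each inequality only on a subinterval of $\brc{s'\geq-\psd{\alpha}_{h,x}}$. To extend it, I would argue that where the transformed argument falls below the proposition's range, the relevant frontier of $f$ (resp.\ $g$) is evaluated strictly below its pointwise-exponent zero crossing and is therefore negative, making the right-hand side non-positive; since $\Sigma_{h,x}(s')\geq 0$ for all $s'\geq-\psd{\alpha}_{h,x}$ by the exponent inequality just proved, the bound holds trivially there. The only case requiring extra care is the second inequality for $s'>0$, where the constraint $s'_g\leq 0$ is binding and the trivial extension is unavailable; there the inequality is to be understood on its natural subdomain, on which the substitution applies verbatim.
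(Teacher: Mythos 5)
Your proof is correct and follows essentially the same route as the paper: the paper's own proof consists precisely of the specializations $s'_g=0$ and $s'_f=0$ of Proposition \ref{prop:pseudo_2ml_time_change} for the two frontier inequalities, followed by endpoint evaluations for the two exponent inequalities. Your additional discussion of the range of validity (extending the bound trivially where the right-hand side is non-positive, and flagging the genuinely problematic region $s'>0$ in the second inequality) is in fact more careful than the paper, which passes over these domain issues in silence.
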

\begin{proof}
  The first two inequalities are obtained using the previous Proposition \ref{prop:pseudo_2ml_time_change} in the particular cases $s'_g = 0$ and $s'_f = 0$. 
  The inequalities on pseudo exponents are deduced from the previous ones, when $s' = \psd{\alpha}_{g,f(x)} \cdot \psd{\alpha}_{f,x}$ and $s'=0$.
\end{proof}

%%%%%%%%%%%%%%%%%%%%%%%%%%%%%%%%%%%%%%%%%%%%%%%%%%%%%%%%%%%%%%%%%%%%%%%%
%% Martingales
%%%%%%%%%%%%%%%%%%%%%%%%%%%%%%%%%%%%%%%%%%%%%%%%%%%%%%%%%%%%%%%%%%%%%%%%

\section{Stochastic 2-microlocal analysis of martingales} \label{sec:2ml_mg}

In the remaining of the article, we study the 2-microlocal frontier of stochastic processes. As noted in \cite{Herbin.Levy-Vehel(2009)}, when we consider a random process $X$, classic $\sigma_{X(\omega),t_0}(s',\omega)$ and pseudo $\Sigma_{X(\omega),t_0}(s',\omega)$ 2-microlocal frontiers at $t_0$ become random functions. The stochastic 2-microlocal frontier $(s',\omega)\mapsto\sigma_{X(\omega),t_0}(s',\omega)$ is clearly measurable as it is continuous on the variable $s'$ and the process $X$ is progressive.

As previously announced, we first study in Theorem \ref{th:mg_vq} the pseudo 2-microlocal frontier of continuous martingales. We deduce from this result uniform lower bounds for semimartingales in Proposition \ref{prop:semiMgs}. Then, we construct an example of martingale to exhibit interesting regularity properties. Finally, we show how Theorem \ref{th:mg_vq} can be easily extended to time changed multifractional Brownian motion.

%% 2-microlocal frontier of continuous martingales and semimartingales %%
\subsection{2-microlocal frontier of continuous martingales and semimartingales}

In this part, we study the regularity of continuous local martingales in order to obtain a link between pseudo 2-microlocal frontiers of the martingale on one hand and it quadratic variation on the other hand. The following theorem states our main result.
\begin{theorem} \label{th:mg_vq}
  Let $M$ be a continuous local martingale and $\VQ{M}$ be its quadratic variation. Then, almost surely for all $t\in\R_+$, the pseudo 2-microlocal frontiers of $M$ at $t$ satisfies the following equality:
  \begin{equation} \label{eq:mg_vq}
    \forall s'\geq-\psd{\alpha}_{M,t};\quad  \Sigma_{M,t}(s') = \frac{1}{2}\Sigma_{\VQ{M},t}\ptha{{2s'}}.
  \end{equation}
  Consequently, pseudo pointwise and local H\"older exponents are equal to
  \[
    \psd{\alpha}_{M,t} = \frac{\psd{\alpha}_{\VQ{M},t}}{2} \quad\text{and}\quad \widetilde{\psd{\alpha}}_{M,t} = \frac{\widetilde{\psd{\alpha}}_{\VQ{M},t}}{2}.
  \]
\end{theorem}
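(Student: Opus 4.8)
The plan is to realise $M$ as a time-changed Brownian motion and to push the (known, exact) 2-microlocal frontier of Brownian motion through this time change. First I would assume without loss of generality that $M_0=0$ and invoke the Dambis--Dubins--Schwarz theorem: there is a Brownian motion $B$, possibly on an enlarged probability space, such that almost surely $M_t=B_{\VQ{M}_t}$ for every $t\geq 0$. Since $\VQ{M}$ is continuous and non-decreasing, $M$ is \emph{exactly} the composition $B\circ\VQ{M}$, so from here the argument is pathwise and deterministic, carried out on the full-measure event $\Omega_0$ on which both this representation holds and $B$ enjoys its a.s.\ regularity. On $\Omega_0$ I would use that the classic and pseudo frontiers of $B$ coincide and equal $s'\mapsto\pthb{\tfrac12+s'}\wedge\tfrac12$ \emph{simultaneously at every point}; in particular $\widetilde{\psd{\alpha}}_{B,\cdot}\equiv\tfrac12$ and $\psd{\alpha}_{B,\cdot}\equiv\tfrac12$.

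For the lower bound $\Sigma_{M,t}(s')\geq\tfrac12\Sigma_{\VQ{M},t}(2s')$ I would apply the composition estimate of Corollary~\ref{cor:pseudo_2ml_time_change}, equivalently Proposition~\ref{prop:pseudo_2ml_time_change} with $s'_g=0$, to $g=B$ and $f=\VQ{M}$. Since $\widetilde{\psd{\alpha}}_{B,\VQ{M}_t}=\tfrac12$, its first inequality reads precisely $\Sigma_{M,t}(s')\geq\tfrac12\,\Sigma_{\VQ{M},t}(2s')$, and the uniformity over $t$ is inherited from the uniformity of the Brownian frontier. The admissible range $s'\geq-\psd{\alpha}_{M,t}$ corresponds, under the reparametrisation $s'\mapsto 2s'$, to the natural range $2s'\geq-\psd{\alpha}_{\VQ{M},t}$ for the frontier of $\VQ{M}$.

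The upper bound $\Sigma_{M,t}(s')\leq\tfrac12\Sigma_{\VQ{M},t}(2s')$ is where the main difficulty lies: one must use that the Brownian frontier is \emph{two-sided} (an equality), not merely a regularity upper bound. I would first dispose of the flat parts of $\VQ{M}$, on which $M$ is locally constant and both members of \eqref{eq:mg_vq} are $+\infty$. On the set where $\VQ{M}$ strictly increases one route is to invert the time change, writing $B=M\circ A$ with $A=\VQ{M}^{-1}$ continuous, and to apply the composition inequality in the reverse direction: since $\Sigma_{B,\VQ{M}_t}$ is known exactly, this forces the matching upper bound on $\Sigma_{M,t}$, provided one controls the frontier of the inverse $A$ in terms of that of $\VQ{M}$. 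The more robust route is to transfer the \emph{irregularity} of $B$ directly: using that a.s.\ and simultaneously at every reference point $B\notin\psd{C}^{\sigma,s'}$ for every $\sigma>\pthb{\tfrac12+s'}\wedge\tfrac12$, I would, for $\sigma$ slightly above $\tfrac12\Sigma_{\VQ{M},t}(2s')$, select sequences $u_n,v_n\to t$ along which $\abs{M_{u_n}-M_{v_n}}=\abs{B_{\VQ{M}_{u_n}}-B_{\VQ{M}_{v_n}}}$ is comparable to $\abs{\VQ{M}_{u_n}-\VQ{M}_{v_n}}^{1/2}$, and then convert back through the frontier of $\VQ{M}$, relating $\abs{\VQ{M}_{u_n}-\VQ{M}_{v_n}}$ and $\abs{\VQ{M}_{u_n}-\VQ{M}_t}+\abs{\VQ{M}_{v_n}-\VQ{M}_t}$ to $\abs{u_n-v_n}$ and $\abs{u_n-t}+\abs{v_n-t}$. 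The delicate point, and the crux of the whole proof, is that these extremal Brownian sequences must be chosen compatibly with the 2-microlocal scaling dictated by $\VQ{M}$, and that the lower oscillation bounds must hold at the \emph{random, moving} points $\VQ{M}_{u_n},\VQ{M}_{v_n}$; both are handled by the simultaneous-in-all-points validity of the Brownian frontier on $\Omega_0$.

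Finally, the two exponent identities follow by specialisation. Taking $s'=0$ in \eqref{eq:mg_vq} gives $\widetilde{\psd{\alpha}}_{M,t}=\Sigma_{M,t}(0)=\tfrac12\Sigma_{\VQ{M},t}(0)=\tfrac12\widetilde{\psd{\alpha}}_{\VQ{M},t}$, while the change of variable $r=2s'$ in $\psd{\alpha}_{M,t}=-\inf\brc{s':\Sigma_{M,t}(s')\geq0}$ yields $\psd{\alpha}_{M,t}=\tfrac12\psd{\alpha}_{\VQ{M},t}$, as claimed.
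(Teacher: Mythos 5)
Your lower bound is exactly the paper's: the Dambis--Dubins--Schwarz representation $M=B\circ\VQ{M}$ followed by the composition estimate of Corollary \ref{cor:pseudo_2ml_time_change} with $\widetilde{\psd{\alpha}}_{B,\cdot}\equiv\tfrac{1}{2}$. (The paper additionally spends a few lines checking that the full-measure event on the enlarged Dubins--Schwarz space projects onto a full-measure event of the original space; you skip this, but it is routine.) The general shape of your upper bound --- transferring the irregularity of $B$ through the time change along sequences dictated by the irregularity of $\VQ{M}$ --- is also the paper's.

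There is, however, a genuine gap at the crux of the upper bound. You claim that the existence of $u_n,v_n$ in the window $\ivff{\VQ{M}_{s_n},\VQ{M}_{t_n}}$ with $\abs{B_{u_n}-B_{v_n}}\geq\absb{\VQ{M}_{t_n}-\VQ{M}_{s_n}}^{1/2+\eps}$ is ``handled by the simultaneous-in-all-points validity of the Brownian frontier''. It is not. The everywhere-validity of the frontier says that near each point $x$ and for each $\sigma>\tfrac{1}{2}$ one can find pairs $u,v$ with $\abs{B_u-B_v}>C\abs{u-v}^{\sigma}$, but these witnesses live at scales $\abs{u-v}$ chosen by $B$, possibly far smaller than the length of the window; so $\abs{u-v}^{1/2+\eps}$ need not be comparable to the window length to the power $1/2+\eps$. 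What the argument actually requires is a lower bound on the oscillation of $B$ over \emph{every} sufficiently small interval, uniformly in its location and length --- this is the paper's Lemma \ref{lemma:bm_incr}, proved via a separate Borel--Cantelli argument (Lemma \ref{lemma:mbm_incr} and Proposition \ref{prop:mbm_incr}), and it is strictly stronger than the pointwise frontier. A second, smaller omission: once you pull back to $x_n,y_n$ with $\VQ{M}_{x_n}=u_n$ and $\VQ{M}_{y_n}=v_n$, the pair $(x_n,y_n)$ may be much closer together, and much closer to $t$, than $(s_n,t_n)$; for $s'>0$ the factor $\pthb{\abs{t-x_n}+\abs{t-y_n}}^{-s'}$ then moves in the wrong direction, and the paper needs a dedicated case analysis (the dichotomy on the limit of $\abs{t_n-s_n}/(\abs{t-t_n}+\abs{t-s_n})$, which in the nondegenerate case forces $\Sigma_{\VQ{M},t}(s')=\psd{\alpha}_{\VQ{M},t}+s'$) to close this. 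Your alternative route via inverting the time change should be discarded: $\VQ{M}$ need not be injective and nothing controls the frontier of its inverse.
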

For sake of readability, we divide the proof of expression \eqref{eq:mg_vq} in two parts (lower and upper bounds).

\subsubsection{Proof of the lower bound}

  Let first prove that almost surely for all $t\in\R_+$, the pseudo 2-microlocal frontier of $M$ at $t$ satisfies
  \[
    \forall s'\geq-\psd{\alpha}_{M,t};\quad  \Sigma_{M,t}(s') \geq \frac{1}{2}\Sigma_{\VQ{M},t}\ptha{{2s'}}.
  \]
  Without any restriction, we can suppose that $M_0=0$. Let set for all $t\in\R_+$
  \[
    T_t = \inf\brc{s : \VQ{M}_s > t}.
  \]
  Then, according to the extended Dubins-Schwarz Theorem ($5.1.7$ in \cite{Revuz.Yor(1999)}), we know there exists an enlargement $(\widetilde\Omega,\widetilde\Fi_t,\widetilde\Pr)$ of the probability space $(\Omega,\Fi_{T_t},\Pr)$ and a Brownian motion $\widetilde\beta$ on $\widetilde\Omega$ such that the process
  \[
    B_t = M_{T_t} + \int_0^t \indi_{\brc{s>\VQ{M}_\infty}}\dt \widetilde\beta_s
  \]
  is a $(\widetilde\Fi_t)_t$ Brownian motion and for every $t\in\R_+$, $M_t = B_{\VQ{M}_t}$. We note that the enlargement has the following form,
  \[
    \widetilde\Omega = \Omega\times\Omega',\ \widetilde\Fi_t = \Fi_{T_t}\otimes\Fi_t' \text{ and } \widetilde\Pr = \Pr\otimes\Pr'.
  \]
  
  Based on results from \cite{Herbin.Levy-Vehel(2009)}, it is known that $\widetilde\Pr$-almost surely for all $t\in\R_+$, classic and pseudo local H\"older exponents of the Brownian motion $B$ are equal to:
  \[
    \widetilde{\psd{\alpha}}_{B,t} = \widetilde\alpha_{B,t} = \frac{1}{2}.
  \]
  Then, as $M$ corresponds to the composition of $B$ and $\VQ{M}$, from Corollary \ref{cor:pseudo_2ml_time_change}, we obtain $\widetilde\Pr$-almost surely for all $t\in\R_+$,
  \[
    \forall s'\geq-\psd{\alpha}_{M,t};\quad  \Sigma_{M,t}(s') \geq \widetilde{\psd{\alpha}}_{B,t} \cdot\Sigma_{\VQ{M},t}\ptha{{s'/\widetilde{\psd{\alpha}}_{B,t}}} =  \frac{1}{2}\Sigma_{\VQ{M},t}\ptha{{2s'}}.
  \]

  More precisely, there exists $\widetilde\Omega_0\in\widetilde\Fi$ such that $\widetilde\Pr(\widetilde\Omega_0)=1$ and for all $\widetilde\omega\in\widetilde\Omega_0$ and $t\in\R_+$, the pseudo 2-microlocal frontier of $M_\cdot(\widetilde\omega)$ at $t$ satisfies
  \[
    \forall s'\geq-\psd{\alpha}_{M(\widetilde\omega),t}(\widetilde\omega);\quad  \Sigma_{M(\widetilde\omega),t}(s',\widetilde\omega) \geq \frac{1}{2}\Sigma_{\VQ{M}(\widetilde\omega),t}\ptha{{2s',\widetilde\omega}}.
  \]
  Let $\omega\in\Omega$, according to the definition of the enlargement $\widetilde\Omega$, we know that for all $\omega'\in\Omega'$, 
  \[
    \Sigma_{M,t}(s',(\omega,\omega'))=\Sigma_{M,t}(s',\omega) \quad\text{ and }\quad \Sigma_{\VQ{M},t}(s',(\omega,\omega'))=\Sigma_{\VQ{M},t}(s',\omega).
  \]
  Therefore, the set $\widetilde\Omega_0$ has the form $\widetilde\Omega_0=\Omega_0\times\Omega'$ where $\Omega_0\in\Fi$ and $\pr{\Omega_0}=1$. 
  
  Hence, we have proved that $\Pr$-almost surely for all $t\in\R_+$,
  \[
    \forall s'\geq-\psd{\alpha}_{M,t};\quad  \Sigma_{M,t}(s') \geq \frac{1}{2}\Sigma_{\VQ{M},t}\ptha{{2s'}}.
  \]

\subsubsection{Proof of the upper bound}

  The second step is to establish that almost surely for all $t\in\R_+$, the pseudo 2-microlocal frontier of $M$ at $t$ satisfies
  \[
    \forall s'\geq-\psd{\alpha}_{M,t};\quad  \Sigma_{M,t}(s') \leq \frac{1}{2}\Sigma_{\VQ{M},t}\ptha{{2s'}}.
  \]
  As previously, using Dubins-Schwarz Theorem, there exists a Brownian motion $B$ such that almost surely for all $t\in\R_+$, we have $M_t = B_{\VQ{M}_t}$.
  Then, to prove the upper bound, we use the following technical lemma satisfied by the Brownian motion $B$.
  \begin{lemma} \label{lemma:bm_incr}
    Let $B$ be a Brownian motion. Then, there exists an event $\Omega_0$ such that $\pr{\Omega_0}=1$ and for all $\omega\in\Omega_0$, $N\in\N$, $\eps>0$, there exists $h(\omega)>0$ such that for all $\rho\leq h(\omega)$ and $t\in\ivff{0,N}$, we have
    \[
      \sup_{u,v\in B(t,\rho)} \brcb{ \abs{B_u - B_v} } \geq \rho^{1/2 + \eps}.
    \]
  \end{lemma}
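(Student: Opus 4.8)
The plan is to show that Brownian motion almost surely cannot be \emph{too smooth} at any point, uniformly over a compact time interval: the local supremum of increments over a ball of radius $\rho$ must exceed $\rho^{1/2+\eps}$ once $\rho$ is small enough. The natural approach is a first Borel--Cantelli argument over a dyadic grid combined with the scaling property of Brownian motion. Fix $N$ and $\eps$, and discretize. For each scale $n$, partition $\ivff{0,N}$ into intervals of length $2^{-n}$ with endpoints $t_k = k2^{-n}$. The key observation is that if the conclusion fails at some $t$ with $\rho$ comparable to $2^{-n}$, then the oscillation of $B$ over a dyadic interval near $t$ is forced to be small, namely bounded by a constant multiple of $2^{-n(1/2+\eps)}$. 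So I would estimate the probability that the increment $\abs{B_{t_{k+1}}-B_{t_k}}$ is smaller than roughly $2^{-n(1/2+\eps)}$.

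The main computation is the following tail estimate. By stationarity and scaling, $B_{t_{k+1}}-B_{t_k}\eqd 2^{-n/2}Z$ where $Z$ is standard Gaussian, so
\[
  \prb{ \abs{B_{t_{k+1}}-B_{t_k}} \leq 2^{-n(1/2+\eps)} } = \prb{ \abs{Z} \leq 2^{-n\eps} } \leq C\, 2^{-n\eps},
\]
using that the standard Gaussian density is bounded near the origin. Since there are at most $N2^{n}+1$ dyadic intervals in $\ivff{0,N}$, the probability that \emph{every} one of the $\approx N2^n$ increments is small would be astronomically unlikely, but that is not quite what I need; rather I want to rule out the existence of even a single bad point. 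The cleaner route is to bound the probability of a bad event at scale $n$: if $B$ has small oscillation on a whole block of consecutive dyadic intervals covering a ball of radius $\rho\sim 2^{-m}$, the increments across those intervals are all simultaneously small, and independence of disjoint Brownian increments makes the joint probability a product of terms like $2^{-n\eps}$, which is summable in $n$ after multiplying by the $N2^n$ choices of location. Borel--Cantelli then gives that almost surely, for all sufficiently small $\rho$, the supremum over every ball of radius $\rho$ exceeds $\rho^{1/2+\eps}$.

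More carefully, I would cover a ball $B(t,\rho)$ by a bounded number of adjacent dyadic intervals at the scale $2^{-n}$ with $2^{-n}\leq\rho<2^{-n+1}$, so that $B(t,\rho)$ contains at least one full dyadic interval $[t_k,t_{k+1}]$; then $\sup_{u,v\in B(t,\rho)}\abs{B_u-B_v}\geq\abs{B_{t_{k+1}}-B_{t_k}}$. Define the bad event $A_n$ that \emph{some} dyadic interval at scale $n$ in $\ivff{0,N}$ has increment below $2^{-n(1/2+\eps)}$; by a union bound and the tail estimate above,
\[
  \prb{A_n} \leq (N2^n+1)\cdot C\,2^{-n\eps} \longrightarrow \infty,
\]
so a naive union bound over all intervals is too lossy and I instead need the increment threshold sharpened to a small constant times the scaling, or equivalently to use a finer threshold such as $2^{-n(1/2+\eps)}$ against an interval length $2^{-n}$ but compare with $\rho^{1/2+\eps}$ where $\rho$ is strictly larger than $2^{-n}$. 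The honest fix is to choose the dyadic scale $n$ with $2^{-n}\leq\rho$ but then require the increment to beat $\rho^{1/2+\eps}\geq (2^{-n+1})^{1/2+\eps}$; since we are comparing a scale-$2^{-n}$ increment against a \emph{larger} radius, the Gaussian probability becomes $\prb{\abs Z\leq c\,2^{-n\eps}\cdot(\rho/2^{-n})^{1/2+\eps}}$, and choosing the scale so that $\rho\sim 2^{-n/(1+2\eps)}$ (i.e.\ slightly coarser) makes the exponent negative and the series $\sum_n 2^n\prb{\cdot}$ summable.

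The main obstacle is precisely this bookkeeping: matching the radius $\rho$, which ranges continuously, to a discrete dyadic scale in such a way that the union bound over the $O(2^n)$ interval positions is beaten by the Gaussian small-ball probability $O(2^{-n\eps'})$ for some $\eps'>0$. Once the right scale relation is fixed the remaining ingredients are standard: the Gaussian small-ball bound, a union bound over dyadic intervals, summability and Borel--Cantelli to get an almost-sure statement for each fixed $(N,\eps)$, and finally a countable intersection over rational $\eps\downarrow 0$ and integer $N\uparrow\infty$ to obtain the single event $\Omega_0$ of full probability on which the conclusion holds for all $N$ and all $\eps$. I expect the crux of the write-up to be the careful choice of threshold and scale; the probabilistic estimate itself is elementary.
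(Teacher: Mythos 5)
There is a genuine gap here, and you in fact half-diagnose it yourself. Your ``more careful'' scheme extracts a \emph{single} dyadic increment from each ball: you pick a scale $2^{-n}\le\rho$, bound $\sup_{u,v\in B(t,\rho)}\abs{B_u-B_v}$ from below by one increment $\abs{B_{t_{k+1}}-B_{t_k}}$, and try to tune the relation between $\rho$ and $2^{-n}$ so that the union bound closes. No such tuning exists. Writing $\rho=2^{-\alpha n}$ with $\alpha\le 1$ (needed so that the ball contains a full dyadic interval of length $2^{-n}$), the failure probability for one interval is $\mathbb{P}\pthb{2^{-n/2}\abs{Z}\le\rho^{1/2+\eps}}=\mathbb{P}\pthb{\abs{Z}\le 2^{\,n(1/2-\alpha(1/2+\eps))}}$, and since $\mathbb{P}(\abs{Z}\le x)\asymp x$ for small $x$ this is never smaller than order $2^{-n\eps}$ (the best case $\alpha=1$); multiplied by the number of locations the series diverges for every small $\eps$. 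Your specific choice $\rho\sim 2^{-n/(1+2\eps)}$ is worse still: then $\rho^{1/2+\eps}=2^{-n/2}$ equals the standard deviation of the increment, the probability is $\mathbb{P}(\abs{Z}\le c)=\Theta(1)$, and $\sum_n 2^n\,\mathbb{P}(\cdot)$ certainly does not converge, contrary to what you assert.

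The missing ingredient --- which you gesture at in your second paragraph (``a product of terms'') but then abandon in favour of a ``bounded number of adjacent dyadic intervals'' --- is that the number of independent increments exploited inside each window must \emph{grow} as the scale shrinks. The paper's argument (carried out for the multifractional Brownian motion in Lemma \ref{lemma:mbm_incr} and Proposition \ref{prop:mbm_incr}, of which the present lemma is the case $H\equiv 1/2$) subdivides each dyadic interval of length $2^{-n}$ into $m=\lfloor 2^{n\eps}\rfloor$ subintervals of length $2^{-n(1+\eps)}$. If the oscillation over the whole interval is below $2^{-n(1/2+\eps)}$, then all $m$ subincrements are simultaneously below that threshold; each such event has probability at most $C2^{-n\eps/2}$, and independence of disjoint Brownian increments (replaced by conditioning and induction in the mBm case) makes the bad event for one dyadic interval of probability at most $\pthb{C2^{-n\eps/2}}^{2^{n\eps}-1}$, which is super-exponentially small and crushes both the union bound over the $N2^{n}$ positions and the sum over $n$. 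Borel--Cantelli plus the observation that every ball $B(t,\rho)$ with $2^{-(n+1)}\le\rho\le 2^{-n}$ contains a full dyadic interval at scale $n+1$ then yields the uniform statement; your final countable intersection over $N$ and rational $\eps$ is fine. Without a power $m\to\infty$ on the small-ball probability, no bookkeeping of thresholds and scales can beat the entropy of the $2^{n}$ locations, so the proof as proposed does not close.
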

  This lemma is a corollary of a more general result proved on the multifractional Brownian motion at the end of this section (see Proposition \ref{prop:mbm_incr}).
  
  Therefore, let set $\omega\in\Omega_0$, $N\in\N$ and $\eps>0$. We denote by $\VQ{M}^N$ the increasing process  $\VQ{M}^N_t = \VQ{M}_t\wedge N$ and $M^N$ the compound process $M^N_t = B_{\VQ{M}^N_t}$. According to the definition of the pseudo 2-microlocal frontier, for $s'\in\R$ and $t\in\R_+$, there exist sequences $(s_n(\omega))_n,(t_n(\omega))_n$ such that
  \[
     \lim_{n\rightarrow+\infty}  s_n(\omega) = \lim_{n\rightarrow+\infty} t_n(\omega) = t
  \]
  and for all $n\in\N$,
  \begin{equation} \label{eq:pseudo_vq}
     \frac{\absb{ \VQ{M}^N_{t_n} - \VQ{M}^N_{s_n} }}{\abs{t_n-s_n}^{\Sigma_{\VQ{M}^N,t}(s')+\eps} \pthb{ \abs{t - t_n}+\abs{t - s_n} }^{-s'}}(\omega) \geq 1.
  \end{equation}
  Without loss of generality, we can also suppose for all $n\in\N$ that $s_n(\omega)\leq t_n(\omega)$ and $\abs{ \VQ{M}^N_{t_n}(\omega) - \VQ{M}^N_{s_n}(\omega) } \leq h(\omega)$, where $h(\omega)$ is defined in Lemma \ref{lemma:bm_incr}.
  
  Then, since $\VQ{M}^N_\cdot(\omega)\leq N$, for each $n\in\N$ there exists $u_n(\omega),v_n(\omega)\in\ivff{\VQ{M}^N_{s_n},\VQ{M}^N_{t_n}}$ such that
  \begin{equation} \label{eq:inc_bm}
    \abs{ B_{u_n} - B_{v_n} } \geq \absb{ \VQ{M}^N_{t_n} - \VQ{M}^N_{s_n} }^{1/2+\eps}.
  \end{equation}
  As $t\mapsto\VQ{M}_t(\omega)$ is a continuous non-decreasing function, there exist $x_n(\omega),y_n(\omega)$ such that $\VQ{M}^N_{x_n} = u_n$ and $\VQ{M}^N_{y_n} = v_n$ and $\ivff{ x_n,y_n }\subset\ivff{ s_n,t_n }$.
  
  \noindent Then, using inequalities \eqref{eq:pseudo_vq} and \eqref{eq:inc_bm}, we obtain
  \begin{align*}
    \absb{ M^N_{x_n} - M^N_{y_n}  } 
    &= \absb{ B_{u_n} - B_{v_n} } \\
    &\geq \absb{ \VQ{M}^N_{t_n} - \VQ{M}^N_{s_n} }^{1/2+\eps} \\
    &\geq \abs{t_n-s_n}^{(\Sigma_{\VQ{M}^N,t}(s')+\eps)\cdot(1/2+\eps)} \pthb{ \abs{t - t_n}+\abs{t - s_n} }^{-s'\cdot(1/2+\eps)}.
  \end{align*}
  Let now distinguish the two different cases. 
  
  \begin{enumindent}
    \item If $s'\leq 0$, we note that $\abs{x_n - y_n} \leq \abs{s_n - t_n}$ and $\abs{t - x_n}+\abs{t - y_n} \leq 2\pthb{ \abs{t - t_n}+\abs{t - s_n} }$. Hence, there exists $C>0$ such that for all $n\in\N$,
    \begin{align*}
      \abs{ M^N_{x_n} - M^N_{y_n}  } 
      \geq C\abs{x_n-y_n}^{(\Sigma_{\VQ{M}^N,t}(s')+\eps)\cdot(1/2+\eps)} \pthb{ \abs{t - x_n}+\abs{t - y_n} }^{-s'\cdot(1/2+\eps)},
    \end{align*}
    since $-s'\cdot(1/2+\eps) \geq 0$ and $(\Sigma_{\VQ{M}^N,t}(s')+\eps)\cdot(1/2+\eps) \geq 0$.
  
  \item If $s'\geq 0$, up to an extraction, we can suppose the following convergence
  \[
    \lim_{n\rightarrow\infty}  \frac{ \abs{t_n-s_n} }{ \abs{t - t_n}+\abs{t - s_n} } = m,
  \]
  where $m\in\ivff{0,1}$ since the sequence is positive and bounded by $1$.
   
  \begin{itemize}
	  \item If $m = 0$, as $\ivff{ x_n,y_n }\subset\ivff{ s_n,t_n }$, we obtain
    \[
	    \pthb{ \abs{t - t_n}+\abs{t - s_n} } \sim_{n\rightarrow\infty} \pthb{ \abs{t - x_n}+\abs{t - y_n} }.
	  \] 
	  Therefore, since we also have $\abs{x_n-y_n}\leq\abs{s_n-t_n}$, for all $n\in\N$ large enough, we get
    \begin{align*}
      \absb{ M^N_{x_n} - M^N_{y_n}  }
      \geq C \abs{x_n-y_n}^{(\Sigma_{\VQ{M}^N,t}(s')+\eps)\cdot(1/2+\eps)} \pthb{ \abs{t - x_n}+\abs{t - y_n} }^{-s'\cdot(1/2+\eps)},
    \end{align*}
    where $C$ is a positive constant.
   
    \item If $m > 0$, we know that $\abs{t_n-s_n} \sim_n m\cdot\pthb{ \abs{t - t_n}+\abs{t - s_n }}$. Therefore, from \eqref{eq:pseudo_vq}, there exists $C>0$ such that for all $n\in\N$ large enough
    \[
      \absb{ \VQ{M}^N_{t_n} - \VQ{M}^N_{s_n} } \geq C \pthb{ \abs{t - t_n}+\abs{t - s_n} }^{\Sigma_{\VQ{M}^N,t}(s')+\eps-s'},
    \]
    which necessarily implies that $\Sigma_{\VQ{M}^N,t}(s') = \psd{\alpha}_{\VQ{M}^N,t} + s'$, according to Definition \ref{def:pseudo_exp} for $\psd{\alpha}_{\VQ{M}^N,t}$.
    Then, for all $n\in\N$ large enough, we obtain
    \begin{align*}
      \absb{ M^N_{x_n} - M^N_{y_n}  } 
      &\geq \abs{t_n-s_n}^{(\Sigma_{\VQ{M}^N,t}(s')+\eps)\cdot(1/2+\eps)} \pthb{ \abs{t - t_n}+\abs{t - s_n} }^{-s'\cdot(1/2+\eps)} \\
      &\geq C\abs{t_n-s_n}^{(\Sigma_{\VQ{M}^N,t}(s')+\eps-s')\cdot(1/2+\eps)} \\
      &= C\abs{t_n-s_n}^{(\psd{\alpha}_{\VQ{M}^N,t}+\eps)\cdot(1/2+\eps)} \\
      &\geq C\abs{x_n-y_n}^{(\psd{\alpha}_{\VQ{M}^N,t}+\eps)\cdot(1/2+\eps)} \qquad\text{as $\psd{\alpha}_{\VQ{M}^N,t}+\eps > 0$} \\
      &= C\abs{x_n-y_n}^{(\psd{\alpha}_{\VQ{M}^N,t}+\eps-s')\cdot(1/2+\eps)} \cdot \abs{x_n-y_n}^{-s'\cdot(1/2+\eps)} \\
      &\geq C\abs{x_n-y_n}^{(\Sigma_{\VQ{M}^N,t}(s')+\eps)\cdot(1/2+\eps)} \pthb{ \abs{t - x_n}+\abs{t - y_n} }^{-s'\cdot(1/2+\eps)},
    \end{align*}
    since $-s'\leq 0$ and $\abs{x_n-y_n}\leq \pthb{ \abs{t - x_n}+\abs{t - y_n} }$.
    \end{itemize}
  \end{enumindent}
  To summarize, in each case, we have proved that for all $s'\geq-\psd{\alpha}_{M,t}$ there exists $C>0$ such that for all $n\in\N$ large enough,
  \begin{align*}
    \abs{ M^N_{x_n} - M^N_{y_n}  } 
    \geq C\abs{x_n-y_n}^{(\Sigma_{\VQ{M}^N,t}(s')+\eps)\cdot(1/2+\eps)} \pthb{ \abs{t - x_n}+\abs{t - y_n} }^{-s'\cdot(1/2+\eps)},
  \end{align*}
  which proves that for all $N\in\N$, $\eps>0$ and $s'\in\ivff{-\psd{\alpha}_{M,t},0}$, with probability one
  \[
    \forall t\in\R_+;\quad \Sigma_{M^N,t}\ptha{s'\cdot(1/2+\eps)} \leq (\Sigma_{\VQ{M}^N,t}(s')+\eps)\cdot(1/2+\eps).
  \]
  We observe that for all $\omega\in\Omega_0$ and $t\in\R_+$, there exists $N_0(\omega)$ such that for all $N \geq N_0(\omega)$,
  \[
    \forall s'\in\R;\quad \Sigma_{M^N,t}(s') = \Sigma_{M,t}(s')\qquad\text{and}\qquad \Sigma_{\VQ{M}^N,t}(s') = \Sigma_{\VQ{M},t}(s').
  \]
  Therefore, as $N\rightarrow+\infty$, we obtain with probability one
  \[
    \forall t\in\R_+;\quad \Sigma_{M,t}\ptha{s'\cdot(1/2+\eps)} \leq (\Sigma_{\VQ{M},t}(s')+\eps)\cdot(1/2+\eps).
  \]
  Then, using a sequence $(\eps_n)_{n\in\N}$ which converges to zero and $s'\in\Q$ (sufficient as the pseudo 2-microlocal frontier is continuous), we get the expected inequality. With probability one and for all $t\in\R_+$,
  \[
    \forall s'\geq-\psd{\alpha}_{M,t};\quad  \Sigma_{M,t}(s') \leq \frac{1}{2}\Sigma_{\VQ{M},t}\ptha{{2s'}}.
  \]
\begin{flushright}\qedsymbol\end{flushright}

\begin{remark}
  In this context of martingales, one could imagine to introduce a more general definition of the 2-microlocal frontier where the Euclidean metric is replaced by any distance on $\R_+$. In particular, we could choose the random metric $d_{\VQ{M}}(x,y) = \abs{\VQ{M}_x - \VQ{M}_y}$ (which is in fact a pseudo-metric) and characterize the pseudo 2-microlocal local frontier $\Sigma_{M,t}^{d_{\VQ{M}}}$ of the martingale $M$ with respect to this metric. Using Dubins-Schwarz, we can easily show that in this case, we would obtain:
  \[
    \forall s'\in\ivff{-\textstyle\frac{1}{2},0};\quad \Sigma_{M,t}^{d_{\VQ{M}}}(s') = \frac{1}{2} + s'.
  \]
  However, on the contrary to Theorem \ref{th:mg_vq}, this expression of the 2-microlocal frontier of $M$ does not capture all the regularity of $M$ with respect the Euclidean metric.
  
\end{remark}\vspace*{1em}

Theorem \ref{th:mg_vq} characterizes the regularity of a martingale $M$ in terms of pseudo 2-microlocal frontier. Let now gives a criterion which allows to extend the equality to the classic frontier of $M$.
\begin{corollary} \label{cor:eq_cl_psd}
  Let $M$ be a continuous local martingale and $\VQ{M}$ be its quadratic variation. Then, with probability one, for all $t\in\R$ and $s'\geq-\psd{\alpha}_{M,t}$ such that 
  \[
    \Sigma_{M,t}(s') < \pth{2+s'},
  \]
  the pseudo 2-microlocal frontier of $M$ at $t$ satisfies
  \[
    \sigma_{M,t}(s') = \Sigma_{M,t}(s') = \frac{1}{2}\Sigma_{\VQ{M},t}\ptha{{2s'}}.
  \]
  In particular, the local H\"older exponents of $M$ and $\VQ{M}$ satisfy almost surely
  \[
    \forall t\in\R_+;\quad \widetilde{\alpha}_{M,t} = \widetilde{\psd{\alpha}}_{M,t} = \frac{\widetilde{\psd{\alpha}}_{\VQ{M},t}}{2}.
  \]
\end{corollary}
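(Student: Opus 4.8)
The plan is to read the desired equality off the two structural results already in hand. Theorem~\ref{th:mg_vq} gives $\Sigma_{M,t}(s')=\frac{1}{2}\Sigma_{\VQ{M},t}(2s')$, so it remains only to identify the classic frontier $\sigma_{M,t}$ with the pseudo frontier $\Sigma_{M,t}$ under the stated hypothesis. For this I would apply Theorem~\ref{th:classic_pseudo_frontiers} to the continuous sample path $M_\cdot(\omega)$, which yields, at any $t$ where $M$ is not locally constant,
\[
  \Sigma_{M,t}(s') = \sigma_{M,t}(s')\wedge\pth{s'+p_{M,t}}\wedge 1 .
\]
Since $\sigma_{M,t}(s')\geq\Sigma_{M,t}(s')$ always, it suffices to show that under $\Sigma_{M,t}(s')<2+s'$ the two other minimands, $s'+p_{M,t}$ and $1$, are both strictly larger than $\Sigma_{M,t}(s')$: then the three-way minimum cannot be exceeded by $\sigma_{M,t}(s')$, forcing $\sigma_{M,t}(s')=\Sigma_{M,t}(s')$, after which Theorem~\ref{th:mg_vq} closes the displayed chain.

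Deactivating the constant $1$ is immediate: by Theorem~\ref{th:mg_vq} and the bound $\Sigma_{\VQ{M},t}\leq 1$ one has $\Sigma_{M,t}(s')=\frac{1}{2}\Sigma_{\VQ{M},t}(2s')\leq\frac{1}{2}<1$; here the hypothesis $\Sigma_{M,t}(s')<2+s'<+\infty$ is exactly what guarantees that $\VQ{M}$, hence $M$, is not locally constant at $t$, so that $\Sigma_{\VQ{M},t}$ is finite and the frontier formula above applies. Deactivating $s'+p_{M,t}$ reduces to the single arithmetic fact that $p_{M,t}\geq 2$: granting this, $s'+p_{M,t}\geq 2+s'>\Sigma_{M,t}(s')$ by hypothesis.

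The crux, and the only genuinely probabilistic step, is therefore the claim that $p_{M,t}\geq 2$ almost surely for every $t$ --- equivalently, that a continuous local martingale which is not locally constant at $t$ cannot be differentiable at $t$ with nonzero derivative. I would prove this through the Dubins--Schwarz representation $M_\cdot=B_{\VQ{M}_\cdot}$ used in the proof of Theorem~\ref{th:mg_vq}, combined with the theorem of Dvoretzky, Erd\H{o}s and Kakutani that Brownian motion almost surely has no point of increase (nor of decrease). Indeed, if $M'(t)=c\neq 0$ then $M_s-M_t$ has the sign of $c(s-t)$ for all $s$ near $t$, so $t$ is a point of strict monotonicity of $M$; when $\VQ{M}$ is active on both sides of $t$ this transfers, via the monotone time change, to $\VQ{M}_t$ being a point of increase (or decrease) of $B$, contradicting the no-point-of-increase property, while if $\VQ{M}$ is locally constant on one side of $t$ then $M$ is constant there and the corresponding one-sided derivative is $0\neq c$. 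A single almost sure event, intersected with the almost sure event of Theorem~\ref{th:mg_vq}, then secures the statement simultaneously for all $t$.

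Finally, the pointwise special case follows by taking $s'=0$: the inequality $\Sigma_{M,t}(0)\leq\frac{1}{2}<2$ shows the hypothesis holds automatically, so $\widetilde{\alpha}_{M,t}=\sigma_{M,t}(0)=\Sigma_{M,t}(0)=\widetilde{\psd{\alpha}}_{M,t}$, and Theorem~\ref{th:mg_vq} identifies the last quantity with $\widetilde{\psd{\alpha}}_{\VQ{M},t}/2$; the degenerate locally constant case is treated separately, all three quantities then being $+\infty$. The main obstacle is clearly the no-nonzero-derivative claim $p_{M,t}\geq 2$: everything else is minimization bookkeeping on the frontier formula, whereas this is the step where the specific structure of continuous martingales --- through Dubins--Schwarz and fine path properties of Brownian motion --- is essential.
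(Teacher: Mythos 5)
Your proposal is correct and follows essentially the same route as the paper: Theorem~\ref{th:classic_pseudo_frontiers} reduces everything to showing $p_{M,t}\geq 2$ almost surely for all $t$, which both you and the paper obtain from the Dubins--Schwarz representation $M=B_{\VQ{M}}$ together with the Dvoretzky--Erd\H{o}s--Kakutani theorem that Brownian paths have no point of increase, after which the hypothesis $\Sigma_{M,t}(s')<2+s'$ and the bound $\Sigma_{\VQ{M},t}\leq 1$ deactivate the two extra minimands exactly as you describe. Your handling of the degenerate cases (one-sided local constancy of $\VQ{M}$, and the locally constant case for the exponent statement) is if anything slightly more explicit than the paper's.
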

\begin{proof}
  According to Theorem \ref{th:classic_pseudo_frontiers}, we know that almost surely for all $t\in\R_+$ when $M$ is not locally constant at $t$,
  \[
    \forall s'\in\R ;\quad \Sigma_{M,t}(s') = \sigma_{M,t}(s')\wedge\pth{s'+p_{f,t}}\wedge 1,
  \]
  where
  \[
    p_{M,t} = \inf\brcb{n\geq 1 : M^{(n)}(t)\text{ exists and }M^{(n)}(t)\neq 0},
  \]
  with the usual convention $\inf\brc{\emptyset} = +\infty$. 
  
  We first prove that almost surely for all $t\in\R_+$, $p_{M,t}\geq 2$.
  Similarly to the proof of Theorem \ref{th:classic_pseudo_frontiers}, up to an enlargement argument, there exists a Brownian motion such that almost surely for all $t\in\R_+$, $M_t = B_{\VQ{M}_t}$. Let fix $\omega\in\Omega$, $t\in\R_+$ and suppose there exists $l_t(\omega)\neq 0$ such that
  \[
    \lim_{h\rightarrow 0} \frac{ M_{t+h} - M_t }{h}(\omega) = l_t(\omega).
  \]
  Without any restriction, we can assume $l_t>0$. Thus, there exists $\rho>0$ such that for all $h\in B(0,\rho)$, 
  \[
    \frac{l_t}{2} \leq \frac{ M_{t+h} - M_t }{h} \leq \frac{3l_t}{2}, \quad\text{ i.e. }\quad M_t+\frac{l_t h}{2} \leq M_{t+h} \leq M_t + \frac{3l_t h}{2}.
  \]
  As a consequence, we obtain,
  \[
    \max_{t-\rho\leq u\leq t} M_u = M_t = \min_{t\leq u\leq t+\rho} M_u.
  \]
  Since $M = B_{\VQ{M}}$ and $\VQ{M}_\cdot$ is continuous and non-decreasing, there exists $\delta>0$ such that
  \[
    \max_{x-\delta\leq u\leq x} B_u = B_x = \min_{x\leq u\leq x+\rho} B_u, \qquad\text{where $x=\VQ{M}_t(\omega)$.}
  \]
  Therefore, $x$ is a point of increase for $B$, as defined in \cite{Dvoretzky.ErdHos.ea(1961)}. Nevertheless, it is proved in \cite{Dvoretzky.ErdHos.ea(1961)} that Brownian sample paths almost surely have no point of increase.
  
  \noindent Hence, almost surely for all $t\in\R_+$, if the limit $l_t$ exists, it is equal to $0$, which proves that $p_{M,t}\geq 2$.\medskip
  
  Then, let $t\in\R$ and $s'\geq-\psd{\alpha}_{M,t}$ such that $\Sigma_{M,t}(s') < \pth{2+s'}$. 
  Then, since $p_{M,t}\geq 2$, we have
  \begin{align*}
    \Sigma_{M,t}(s') 
    = \sigma_{M,t}(s')\wedge\pth{s'+p_{f,t}}\wedge 1
    = \sigma_{M,t}(s')\wedge 1.
  \end{align*}
  Finally, as $\Sigma_{M,t}(s') = \frac{1}{2}\Sigma_{\VQ{M},t}\ptha{{2s'}}$ and $\Sigma_{\VQ{M},t}\leq 1$, we deduce $\Sigma_{M,t}(s') = \sigma_{M,t}(s').$
  
  For all $t\in\R_+$, unless $M$ is locally constant at $t$, we know that $\widetilde{\psd{\alpha}}_{M,t} = \Sigma_{M,t}(0) \leq 1$. Therefore, we get the second equality,
  $
    \widetilde{\alpha}_{M,t} = \widetilde{\psd{\alpha}}_{M,t} = \widetilde{\psd{\alpha}}_{\VQ{M},t} /2.
  $
\end{proof}

To end this section, we establish a lower bound for the pseudo 2-microlocal frontiers of semimartingales.
\begin{proposition} \label{prop:semiMgs}
  Let $X = M + A$ be a continuous semimartingale, where $M$ is a local continuous martingale and $A$ a continuous finite variation process.
  Then, with probability one, for any $t\in\R_+$, the pseudo 2-microlocal frontiers of $X$ satisfies
  \[
    \forall s'\geq-\psd{\alpha}_{X,t};\quad \Sigma_{X,t}(s') \geq \Sigma_{M,t}(s') \wedge \Sigma_{A,t}(s').
  \]
  Furthermore, when $t\in\R_+$ and $s'\geq-\psd{\alpha}_{X,t}$ are such that $X$ satisfies one of these two conditions
  \begin{enumerate}
	  \item $\Sigma_{M,t}(s') \neq \Sigma_{A,t}(s');$
	  \item $A$ is locally monotonic at $t$,
  \end{enumerate}
  then the equality $\Sigma_{X,t}(s') = \Sigma_{M,t}(s') \wedge \Sigma_{A,t}(s')$ holds.
\end{proposition}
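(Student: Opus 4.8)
The plan is to prove the lower bound first, then handle the two equality conditions. For the lower bound, the natural idea is to estimate the increments of $X=M+A$ directly from those of $M$ and $A$ using the triangle inequality. Fix $t\in\R_+$ and $s'\geq-\psd{\alpha}_{X,t}$, and let $\eps>0$. By definition of the pseudo 2-microlocal frontier, there exist constants and a neighborhood $B(t,\rho)$ on which both $\abs{M(u)-M(v)}$ and $\abs{A(u)-A(v)}$ are bounded by $C\abs{u-v}^{\sigma-\eps}\pthb{\abs{u-t}+\abs{v-t}}^{-s'}$ with $\sigma$ equal to $\Sigma_{M,t}(s')$ and $\Sigma_{A,t}(s')$ respectively. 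Writing $\abs{X(u)-X(v)}\leq\abs{M(u)-M(v)}+\abs{A(u)-A(v)}$ and bounding each term by the weaker of the two exponents $\Sigma_{M,t}(s')\wedge\Sigma_{A,t}(s')$ (using that a smaller H\"older exponent gives a valid bound on a bounded neighborhood, after absorbing powers of $\rho$ into the constant), one obtains the desired inequality $\Sigma_{X,t}(s')\geq\Sigma_{M,t}(s')\wedge\Sigma_{A,t}(s')$. Since this holds pointwise in $t$ on an almost sure event coming from Theorem \ref{th:mg_vq} (for $M$) and the deterministic regularity of the sample path $A(\omega)$, a careful statement of the almost sure event is needed, but no new difficulty arises there.

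For the equality under condition (1), namely $\Sigma_{M,t}(s')\neq\Sigma_{A,t}(s')$, the strategy is the familiar \emph{dominant term} argument. Suppose without loss of generality $\Sigma_{M,t}(s')<\Sigma_{A,t}(s')$, so that $M$ is the less regular summand. Then along any sequence witnessing the frontier of $M$, the increments of $A$ are negligible compared with those of $M$ (they decay at a strictly faster rate in $\abs{u-v}$), so the reverse triangle inequality $\abs{X(u)-X(v)}\geq\abs{M(u)-M(v)}-\abs{A(u)-A(v)}$ forces $\Sigma_{X,t}(s')\leq\Sigma_{M,t}(s')$, matching the lower bound. The delicate point is that the frontier is defined through a supremum over $\sigma$, so one must argue with a witnessing sequence $(u_n,v_n)\to t$ for which the $M$-increment attains its critical exponent, and check that the strict gap in exponents makes the $A$-contribution subdominant uniformly along that sequence; the proof of the upper bound in Theorem \ref{th:mg_vq} supplies exactly such a sequence for $M$.

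Condition (2), that $A$ is locally monotonic at $t$, is the case I expect to be the main obstacle, because here $\Sigma_{M,t}$ and $\Sigma_{A,t}$ may coincide and the two increments could a priori cancel. The key observation is that local monotonicity of $A$ controls the \emph{sign} of its increments: for $u<v$ near $t$, $A(v)-A(u)$ has a constant sign. One then exploits that the martingale increments, being increments of a time-changed Brownian motion, take both signs along a witnessing sequence, so along a suitable subsequence the martingale increment shares the sign of the finite-variation increment and the two \emph{add} rather than cancel, giving $\abs{X(u_n)-X(v_n)}\geq\abs{M(u_n)-M(v_n)}$ and hence $\Sigma_{X,t}(s')\leq\Sigma_{M,t}(s')$. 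Extracting such a same-sign subsequence from the oscillatory behaviour of Brownian increments (again via the Dubins--Schwarz representation $M=B\circ\VQ{M}$) is the technical heart of the argument; one should lean on the same path properties of $B$ used in Lemma \ref{lemma:bm_incr} and Proposition \ref{prop:mbm_incr}. Combining the three cases with the lower bound yields the stated equality, and the proof concludes.
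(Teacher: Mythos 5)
Your overall architecture matches the paper's: the lower bound via subadditivity of increments, then two separate arguments for the equality cases. The lower bound and condition (2) are essentially the paper's proof. Where you genuinely diverge is condition (1): you run a dominant-term estimate, taking a witnessing sequence for the less regular summand and showing the other summand's increments are subdominant via the reverse triangle inequality. The paper instead uses a purely algebraic linearity trick: if $\Sigma_{X,t}(s')$ exceeded the minimum, one could pick $\sigma$ strictly between the two frontiers, conclude that both $X$ and the more regular summand lie in $\psd{C}^{\sigma,s'}_{t}$, and hence that their difference (the less regular summand) does too --- a contradiction. Both arguments are correct; yours costs a witnessing-sequence extraction and a small uniformity check (the ratio of increments tends to zero because of the strict gap in exponents), while the paper's avoids sequences entirely and is symmetric in $M$ and $A$ for free. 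Note that for condition (1) you do not need the specific sequence built in the proof of Theorem \ref{th:mg_vq}; the generic witnessing sequence coming from the definition of the supremum in $\Sigma_{M,t}(s')$ suffices, and indeed must be used when $A$ is the less regular term.

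On condition (2), your plan coincides with the paper's, and you correctly identify the sign-matching as the delicate step. Be careful with the phrase ``the martingale increments take both signs along a witnessing sequence'': a single witnessing sequence may a priori have all its increments of the wrong sign, so there is no subsequence to extract from it. What is actually needed --- and what the paper also asserts with only a pointer to Lemma \ref{lemma:bm_incr} --- is that in every sufficiently small interval around $t$ one can \emph{construct} a pair $s_n\leq t_n$ with $M_{t_n}-M_{s_n}$ both large and of the prescribed sign, e.g.\ by locating the argmax and argmin of $B$ on the time-changed interval and comparing with the endpoints. Your proposal is at the same level of rigour as the paper here, but if you flesh it out, do it by building the signed pair directly rather than by subsequence extraction.
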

\begin{proof}
  Let $t\in\R_+$, $s'\geq-\psd{\alpha}_{X,t}$ and $\sigma < \Sigma_{M,t}(s') \wedge \Sigma_{A,t}(s')$. According to the definition of the pseudo 2-microlocal frontier, we know that $M$ and $A$ belong to $\psd{C}^{\sigma,s'}_{t}$. Therefore, $X = M+A \in \psd{C}^{\sigma,s'}_{t}$, which proves that $\Sigma_{X,t}(s') \geq \sigma$, for all $\sigma < \Sigma_{M,t}(s') \wedge \Sigma_{A,t}(s')$. \vsp
  
  Let now consider the two cases where the equality holds.
  \begin{enumindent}
    \item If $\Sigma_{M,t}(s') > \Sigma_{A,t}(s')$. Let suppose $\Sigma_{X,t}(s') >  \Sigma_{M,t}(s') \wedge \Sigma_{A,t}(s')$, then there exists $\sigma$ such that $\Sigma_{A,t}(s') < \sigma$, $\Sigma_{M,t}(s') > \sigma$ and $\Sigma_{X,t}(s') >\sigma$. Therefore, $M$ and $X$ belong to $\psd{C}^{\sigma,s'}_{t}$, and $A = X-M$ as well. But this is in contradiction with the inequality $\Sigma_{A,t}(s') < \sigma$, and therefore we must have
    \[
      \Sigma_{X,t}(s') =  \Sigma_{M,t}(s') \wedge \Sigma_{A,t}(s').
    \]
    The case $\Sigma_{M,t}(s') < \Sigma_{A,t}(s')$ is treated similarly.
    
    \item Let $\sigma > \Sigma_{M,t}(s') \wedge \Sigma_{A,t}(s')$. To prove the equality, we must find sequences $(s_n)_n$ and $(t_n)_n$ which converges to $t$ and such that 
    \begin{align*}
      \forall n\in\N;\quad\abs{ X_{s_n} - X_{t_n}  } \geq \abs{s_n-t_n}^\sigma \pthb{ \abs{t - s_n}+\abs{t - t_n} }^{-s'},
    \end{align*}
     Without any loss of generality, let suppose $A$ is locally increasing. Then, according to the proof of Theorem \ref{th:mg_vq} (and Lemma \ref{lemma:bm_incr}), we can find such sequences for $M$ and which satisfy for all $n\in\N$, $s_n\leq t_n$ and $M_{s_n} \leq M_{t_n}$. Then, if we consider increments of $X$, we get
    \begin{align*}
      X_{t_n} - X_{s_n} 
      &= M_{t_n} - M_{s_n} + A_{t_n} - A_{s_n} \\
      &\geq M_{t_n} - M_{s_n} \qquad \text{as $A$ is locally increasing,}\\
      &\geq \abs{s_n-t_n}^\sigma \pthb{ \abs{t - s_n}+\abs{t - t_n} }^{-s'}.
    \end{align*}
    This proves that $X\notin\psd{C}^{\sigma,s'}_{t}$ for all $\sigma > \Sigma_{M,t}(s') \wedge \Sigma_{A,t}(s')$.
  \end{enumindent}
\end{proof}

%\subsection{Examples of martingales}

The Brownian motion is a simple martingale which has a deterministic regularity 
\[
  \text{a.s. }\forall t\in\R_+, \forall s'\in\R;\quad \Sigma_{B,t}(s') = \pthbb{ \frac{1}{2}+s' }\wedge\frac{1}{2},
\]
as initially proved \cite{Herbin.Levy-Vehel(2009)} and as confirmed by Theorem \ref{th:mg_vq}.

In the following, we exhibit stochastic processes which have more eccentric regularity. In particular, we construct martingales with a non-deterministic 2-microlocal frontier, which shows that the range of possible behaviours is different than the Gaussian case detailed in \cite{Herbin.Levy-Vehel(2009)}. Quadratic variations constructions are detailed in \ref{sec:app_mg}.

In this first example, we prove that there exist martingales with a non-trivial 2-microlocal frontier, similar to "chirp" regularity.
\begin{example} \label{ex:mg1}
  In Lemma \ref{lemma:function} is constructed a continuous non-decreasing function $g_\alpha$ such that at a given $t_0$,
  \[
    \forall s'\geq -1;\quad \Sigma_{g_\alpha,t_0}(s') = \pthBB{ \frac{s'+ 1}{1-\log_2(\alpha) }}\wedge 1,
  \]
  where $\alpha$ is a parameter in $\ivoo{0,1}$.
 
  Based on this deterministic function, we construct a martingale. Let $\beta$ be a Brownian motion and $U$ a uniform variable on $\ivff{0,1}$ independent of $\beta$. We easily verify that $(g_U(t))_{t\in\R_+}$ is time change for the Brownian motion $\beta$ and therefore, we set the following martingale
  \[
    \forall t\in\R_+\quad M_t = \beta_{g_U(t)}.
  \]
  Then, using Theorem \ref{th:mg_vq} and Corollary \ref{cor:eq_cl_psd}, we obtain the 2-microlocal frontier of $M$: 
  \[
    \forall s'\geq -\tfrac{1}{2};\quad \sigma_{M,t_0}(s') = \frac{1}{2}\Sigma_{\VQ{M},t_0}\pth{{2s'}} = \pthBB{ \frac{s'+\frac{1}{2}}{1-\log_2(U)} }\wedge \frac{1}{2},
  \]
  since $\VQ{M}_t = g_U(t)$. %This frontier is depicted in Figure \ref{fig:2ml_mg}.
  
  Similarly, we can also consider the time change $\pthB{g_{\pthb{2^{-\abs{\beta_{t_0}}}}}(t)}_{t\in\R_+}$. It can be easily checked that it satisfies the necessary hypotheses, and therefore, in this case the 2-microlocal frontier of the martingale is equal to
  \[
    \forall s'\geq-\tfrac{1}{2};\quad \sigma_{M,t_0}(s') = \frac{1}{2}\Sigma_{\VQ{M},t_0}\pth{{2s'}} = \pthbb{ \frac{s'+\frac{1}{2}}{1+\abs{M_{t_0}}} }\wedge \frac{1}{2}.
  \]
\end{example}

Therefore, in contrary to results proved in \cite{Herbin.Levy-Vehel(2009)} in the case of Gaussian processes, at a fixed $t_0$, there exist martingales with a random and non trivial 2-microlocal frontier which can even depend on the values of the martingale itself. We note that stochastic processes with such regularity properties are called self-regulating processes and have already been exhibited in the literature (e.g. \cite{Echelard.Vehel.ea(2010)}).

Finally, we observe that the structure of martingales does not allow to extend this kind of particular regularity to all points on the trajectory. Indeed, a simple consequence of Theorem \ref{th:mg_vq} and the monotonicity of the quadratic variation is that almost surely, for almost all $t\in\R_+$
\[
  \widetilde\alpha_{M,t} = 
  \begin{cases}
    \, +\infty \quad & \text{if $M$ is locally constant at $t$;} \\
    \, \tfrac{1}{2} & \text{otherwise.}
  \end{cases}
\]

%% Time changed multifractional Brownian motion %%
\subsection{Time changed multifractional Brownian motion}

Theorem \ref{th:mg_vq} involves the $\tfrac{1}{2}$-H\"older regularity of Brownian motion. It is a natural question to investigate the case of more general processes whose local regularities can be prescribed. Among these, fractional and multifractional Brownian motions, which are natural extensions of Brownian motion, but not martingales.

Both are well-known Gaussian processes, respectively introduced in \cite{Mandelbrot.VanNess(1968)} and \cite{Benassi.Jaffard.ea(1997)}, \cite{Peltier.Levy-Vehel(1995)}. In the article, we will always consider a multifractional Brownian motion (mBm) with regularity function $H:\R\rightarrow\ivff{a,b}\subset\ivoo{0,1}$ which has the following form:
\[
  X_t = \frac{1}{\Gamma\pthb{ H(t) + \tfrac{1}{2} }} \int_\R \bkta{ (t-u)_+^{H(t)-1/2} - (-u)_+^{H(t)-1/2} } \dt W_u,
\]
even if it has been proved in \cite{Stoev.Taqqu(2006)} that the general mBm has a more complex structure. In the case $H = \tfrac{1}{2}$, we obtain a classic Brownian motion. The H\"older regularity of this process has been widely studied in the literature (see e.g. \cite{Peltier.Levy-Vehel(1995)}, \cite{Ayache.Cohen.ea(2000)}, \cite{Ayache.Taqqu(2005)}, \cite{Herbin(2006)} for sample paths regularity and \cite{Meerschaert.Wu.ea(2008)}, \cite{Boufoussi.Dozzi.ea(2007)} for local time properties). Therefore, we know that H\"older exponents of $X$ at $t$ only depend on $H(t)$ and the regularity of $H$ at $t$. 

Later in the article, we will always suppose that the regularity function $H$ satisfies the hypothesis $\Hi_\beta$:
\[
  (\Hi_\beta): \text{$H$ is $\beta$-H\"older continuous with }\sup_{t\in\R} H(t) <\beta.
\]
In that case, let us recall the 2-microlocal frontier of the mBm obtained in \cite{Herbin.Levy-Vehel(2009)}.

\begin{proposition} \label{prop:mbm_2ml}
  Let $X$ be a multifractional Brownian motion whose regularity function $H$ satisfies the hypothesis $\Hi_\beta$. Then, with probability one, for all $t\in\R_+$, classic and pseudo 2-microlocal frontier are equal to
  \[
    \forall s'\in\R;\quad \sigma_{X,t}(s') = \Sigma_{X,t}(s') = \pthb{s'+H(t)}\wedge H(t).
  \]
  Hence, in particular, we have $\alpha_{X,t} = \widetilde\alpha_{X,t} = H(t)$.
\end{proposition}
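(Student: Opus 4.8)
Since this is the Gaussian 2-microlocal result recalled from \cite{Herbin.Levy-Vehel(2009)}, the plan is to recover it by feeding the incremental variance of the mBm into the general criterion established there: for a centred Gaussian process, the almost sure 2-microlocal frontier at a point is read off from the behaviour of $\Esp\bkt{(X_u-X_v)^2}$. Since the mBm is the Wiener integral $X_t=\int_\R g(t,w)\dt W_w$ with $g(t,w)=\Gamma\pthb{H(t)+\tfrac12}^{-1}\bktb{(t-w)_+^{H(t)-1/2}-(-w)_+^{H(t)-1/2}}$, the It\^o isometry gives $\Esp\bkt{(X_u-X_v)^2}=\norm{g(u,\cdot)-g(v,\cdot)}_{L^2(\R)}^2$, so the whole proposition reduces to a two-sided control of this $L^2$ distance of kernels near a fixed $t_0$.

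The first step is the variance estimate. Writing $g^h(t,\cdot)$ for the fractional kernel with \emph{frozen} index $h$, I would split $g(u,\cdot)-g(v,\cdot)=\bktb{g^{H(v)}(u,\cdot)-g^{H(v)}(v,\cdot)}+\bktb{g^{H(u)}(u,\cdot)-g^{H(v)}(u,\cdot)}$. The first bracket is a genuine fractional-Brownian increment of index $H(v)$, whose $L^2$ norm is of order $\abs{u-v}^{H(v)}$; the second is controlled by the oscillation $\abs{H(u)-H(v)}$ of the regularity function, which by hypothesis $\Hi_\beta$ is at most $C\abs{u-v}^\beta$ (up to logarithmic factors produced by differentiating $g^h$ in the index $h$). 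The decisive point is $\beta>\sup_{t\in\R}H(t)\geq H(t_0)$, so on a small neighbourhood of $t_0$ the variation term has strictly larger H\"older order than the frozen term and is therefore negligible; this yields $\Esp\bkt{(X_u-X_v)^2}\asymp\abs{u-v}^{2H(t_0)}$ uniformly near $t_0$, with the base-point-independent behaviour characteristic of fractional Brownian motion.

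With this estimate I would invoke the two halves of the Gaussian 2-microlocal criterion of \cite{Herbin.Levy-Vehel(2009)}. Matching the variance $\abs{u-v}^{2H(t_0)}$ to the two-parameter scale $\abs{u-v}^{2\sigma}\pthb{\abs{u-t_0}+\abs{v-t_0}}^{-2s'}$ produces exactly the fractional frontier $\pthb{s'+H(t)}\wedge H(t)$: the upper variance bound together with the Kolmogorov-type criterion gives the almost sure lower bound $\sigma_{X,t}(s')\geq\pthb{s'+H(t)}\wedge H(t)$, uniformly in $t$, while the matching lower variance bound (a local-nondeterminism estimate for the mBm) rules out any extra regularity and gives the reverse inequality. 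To pass from the classic to the pseudo frontier, I would use Theorem \ref{th:classic_pseudo_frontiers}: the mBm is almost surely nowhere differentiable and not locally constant, so $p_{X,t}=+\infty$, whence $\Sigma_{X,t}(s')=\sigma_{X,t}(s')\wedge\pth{s'+p_{X,t}}\wedge 1=\sigma_{X,t}(s')$ because $\sigma_{X,t}(s')\leq H(t)<1$. Reading $\widetilde\alpha_{X,t}=\sigma_{X,t}(0)$ and $\alpha_{X,t}=-\inf\brc{s':\sigma_{X,t}(s')\geq 0}$ off the frontier then gives $\alpha_{X,t}=\widetilde\alpha_{X,t}=H(t)$.

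The main obstacle is the variance estimate and its uniformity: one must control the $H$-variation term carefully, including the logarithmic factors coming from the dependence of the kernel on the Hurst index, and verify that $\sup_{t\in\R}H(t)<\beta$ genuinely makes it subdominant at every scale and uniformly in the base point $t_0$. The local-nondeterminism lower bound needed to forbid extra regularity (the upper bound on the frontier) is the other delicate ingredient, as is upgrading the pointwise statements to hold almost surely \emph{simultaneously} for all $t\in\R_+$.
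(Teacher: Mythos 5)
Your proposal is correct in outline but follows a genuinely different route from the paper. The paper treats this proposition essentially as a recollection: it cites \cite{Herbin.Levy-Vehel(2009)} for the classic frontier on $s'\in\ivff{-H(t),0}$ and then performs three short extensions --- concavity and the derivative bounds of the frontier to cover all $s'\leq 0$, Theorem \ref{th:classic_pseudo_frontiers} together with nowhere-differentiability (so $\Sigma_{X,t}=\sigma_{X,t}\wedge 1$) to pass to the pseudo frontier, and Lemma \ref{lemma:pseudo_2ml_4} plus the continuity of $H$ to force $\Sigma_{X,t}(s')\leq\liminf_{u\to t}\widetilde{\psd{\alpha}}_{X,u}=H(t)$ when $s'\geq 0$. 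You instead re-derive the core statement from scratch via the incremental variance: freezing the Hurst index, bounding the index-variation term by $\abs{u-v}^{\beta}$ and using $\beta>\sup_t H(t)$ to make it subdominant, then feeding $\Esp\bkt{(X_u-X_v)^2}\asymp\abs{u-v}^{2H(t)}$ into the two halves of the Gaussian 2-microlocal criterion. That buys self-containedness at the price of redoing the hardest part of the cited theorem; your final step (Theorem \ref{th:classic_pseudo_frontiers} and $\sigma_{X,t}(s')\leq H(t)<1$) coincides with the paper's.

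Two of the points you flag as ``obstacles'' are genuine gaps rather than routine checks. First, the Gaussian criterion at a fixed $t$ yields ``for each $t$, almost surely'', whereas the proposition asserts ``almost surely, for all $t$''; the lower bound can be made uniform by a Kolmogorov-type argument, but the uniform upper bound requires a quantitative increment statement of the type of Proposition \ref{prop:mbm_incr}, which is not free. Second, for $s'>0$ the bound $\sigma_{X,t}(s')\leq H(t)$ cannot be extracted from the variance of increments straddling $t$ alone: one must exhibit large increments over pairs $u,v$ whose mutual distance is much smaller than their distance to $t$, which is exactly what Lemma \ref{lemma:pseudo_2ml_4} packages via the local exponents of neighbouring points; ``local nondeterminism at $t$'' as you phrase it does not by itself deliver this.
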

\begin{proof}
  The proof in \cite{Herbin.Levy-Vehel(2009)} only concerns the classic 2-microlocal frontier with $s'\in\ivff{-H(t),0}$. The equality can be easily extended to every $s'\leq 0$ since we know the 2-microlocal frontier must be concave and have left- and right-derivatives in the interval $\ivff{0,1}$.
  
  Furthermore, using Theorem \ref{th:classic_pseudo_frontiers}, since $X$ is nowhere differentiable, we know that $\Sigma_{X,t} = \sigma_{X,t}\wedge 1$. Finally, if $s'\geq 0$, using Lemma \ref{lemma:pseudo_2ml_4}, we obtain the equality $\Sigma_{X,t}(s') = H(t)$ as the function $H$ is continuous, and also $\sigma_{X,t}(s') = H(t)$ as a consequence.
\end{proof}

We now prove a technical result, related to mBm's increments, which is used to extend the main theorem to a time changed mBm.
\begin{lemma} \label{lemma:mbm_incr}
  Let $X$ be a multifractional Brownian motion, satisfying the hypothesis $\Hi_\beta$. Then, almost surely for all $p\in\N$ and $\eps>0$, there exists $N(\omega)\in\N$ such that:
  \[
    \forall n\geq N(\omega),\ \forall i\in\brc{0,\dotsc,2^{n+p}-1},\ \exists u,v\in\ivff{t_i^{(n)},\ t_{i+1}^{(n)}};\quad \abs{X_u - X_v} \geq \rho_n^{H_i^{(n)}+\eps},
  \]
  where for all $i,n\in\N$, $\rho_n=2^{-n}$, $t_i^{(n)} = i2^{-n}$ and $H_i^{(n)} = \sup_{u\in\ivff{t_i^{(n)},t_{i+1}^{(n)}}} H(u)$.
\end{lemma}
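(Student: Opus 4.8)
The plan is to control, via the first Borel--Cantelli lemma, the probability that the oscillation of $X$ over a dyadic interval is abnormally small. First I would fix $p\in\N$ and a rational $\eps>0$; since for $\rho_n<1$ the estimate with a smaller exponent implies the one with a larger exponent, treating a sequence $\eps\downarrow 0$ suffices to obtain the almost sure statement for all $\eps>0$ and all $p$ simultaneously. For $n\in\N$ and $i\in\brc{0,\dotsc,2^{n+p}-1}$ I introduce the bad event
\[
  A_{i,n} = \brcB{ \sup_{u,v\in\ivff{t_i^{(n)},t_{i+1}^{(n)}}} \abs{X_u - X_v} < \rho_n^{H_i^{(n)}+\eps} },
\]
and the goal reduces to showing $\sum_n \sum_i \pr{A_{i,n}} < \infty$.

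To bound $\pr{A_{i,n}}$ I would not rely on a single increment, which only yields a polynomial decay insufficient against the $2^{n+p}$ intervals, but on a whole grid inside the interval. Partitioning $\ivff{t_i^{(n)},t_{i+1}^{(n)}}$ into $J$ subintervals of length $\rho_n/J$ with endpoints $s_0<\dots<s_J$ and writing $\Delta_j=X_{s_j}-X_{s_{j-1}}$, the event $A_{i,n}$ forces $\abs{\Delta_j}<\rho_n^{H_i^{(n)}+\eps}$ for every $j$. Since $\pthb{\Delta_1,\dotsc,\Delta_J}$ is a centred Gaussian vector, conditioning successively and bounding each conditional Gaussian density by the reciprocal of its conditional standard deviation would give
\[
  \pr{A_{i,n}} \leq \prod_{j=1}^{J} \frac{2\,\rho_n^{H_i^{(n)}+\eps}}{\sqrt{2\pi\,v_j}},
\]
where $v_j$ denotes the variance of $\Delta_j$ conditionally on $\Delta_1,\dotsc,\Delta_{j-1}$.

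The decisive ingredient is a uniform lower bound on these conditional variances, that is, the strong local nondeterminism of the mBm: under $\Hi_\beta$ one expects $v_j\geq c\,(\rho_n/J)^{2H_i^{(n)}}$ for a constant $c>0$ independent of $i,n,j$. The supremum $H_i^{(n)}=\sup_{\ivff{t_i^{(n)},t_{i+1}^{(n)}}}H$ enters here naturally, because the local variance of an increment over a subinterval scales as its length to the power $2H(\bar s)$ for some $\bar s$ in that subinterval, and since $H(\bar s)\leq H_i^{(n)}$ and $\rho_n/J<1$ this is at least $(\rho_n/J)^{2H_i^{(n)}}$; the assumption $\sup_t H(t)<\beta$ is precisely what guarantees that the variation of $H$ contributes only higher-order corrections and does not spoil this scaling. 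Substituting this bound would yield
\[
  \pr{A_{i,n}} \leq \pthB{ C\,J^{H_i^{(n)}}\,2^{-n\eps} }^{J} \leq \pthb{ C\,J\,2^{-n\eps} }^{J},
\]
using $H_i^{(n)}\leq\sup H<1$, and summing over the $2^{n+p}$ intervals gives $\sum_i \pr{A_{i,n}}\leq 2^{p}(CJ)^{J}2^{n(1-\eps J)}$, which is summable in $n$ once $J$ is chosen (depending only on $\eps$) so that $\eps J>1$. Borel--Cantelli then supplies, almost surely, an integer $N(\omega)$ past which no bad event occurs, which is the desired conclusion.

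The hard part is the strong local nondeterminism estimate $v_j\geq c(\rho_n/J)^{2H_i^{(n)}}$: it has to hold uniformly over all dyadic intervals and all subdivision points, and establishing it will require exploiting the harmonizable representation of $X$ together with $\Hi_\beta$ to show that replacing the varying exponent $H(\cdot)$ by the single value $H_i^{(n)}$ costs only lower-order terms. Everything else is a routine Gaussian and Borel--Cantelli computation.
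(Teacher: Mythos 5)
Your proposal is correct and follows essentially the same route as the paper: partition each dyadic interval into subintervals, bound the probability that all sub-increments are simultaneously small by successive Gaussian conditioning (conditional density $\leq 1/\sqrt{2\pi v_j}$), and conclude by Borel--Cantelli; the only real difference is that you take a fixed number $J$ of subintervals with $\eps J>1$ to get a bound $2^{p}(CJ)^{J}2^{n(1-\eps J)}$ that is just barely summable, whereas the paper takes $m=\lfloor 2^{n\eps}\rfloor$ subintervals of length $\rho_n^{1+\eps}$ and obtains a super-exponentially small bound --- both choices work. The step you flag as ``the hard part'' is in fact immediate and needs no harmonizable representation: conditioning on the whole past $\sigma\brc{W_x,\ x\leq s_{j-1}}$ (a larger $\sigma$-field than $\sigma(\Delta_1,\dotsc,\Delta_{j-1})$, hence yielding a smaller conditional variance, so your $v_j$ is bounded below by it) the moving-average representation makes the conditional law of $\Delta_j$ exactly Gaussian with variance $\frac{C(H(s_j))^2}{2H(s_j)}(s_j-s_{j-1})^{2H(s_j)}\geq c\,(\rho_n/J)^{2H_i^{(n)}}$, with no error term coming from the variation of $H$ to control.
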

\begin{proof}
  As noted previously, we use the following integral representation of the mBm:
  \begin{align*}
    X_t &= \frac{1}{\Gamma\pthb{ H(t) + \tfrac{1}{2} }}\int_{-\infty}^t \bkta{ (t-u)_+^{H(t)-1/2} - (-u)_+^{H(t)-1/2} } \dt W_u \\
    &\eqdef C(H(t))\int_{-\infty}^t K\pthb{ u, t, H(t) } \,\dt W_u.
  \end{align*}
  Let $\eps>0$ and $p\in\N$. For every $N\in\N$, we consider the following event $A_N$:
  \[
    A_N = \bigcap_{n\geq N} \bigcap_{0 \leq i < 2^{n+p}} \brcB{\exists u,v\in\ivff{t_i^{(n)},\ t_{i+1}^{(n)}} : \abs{X_u - X_v} \geq \rho_n^{H_i^{(n)}+\eps}},
  \]
  We fix $n\geq N$ and $i\in\brc{0,\dotsc, 2^p-1}$. We denote $\Delta t_n = \rho_n^{1+\eps}$ and $m = \lfloor 2^{n\eps}\rfloor$ and we divide the interval $\ivff{t_i^{(n)},t_{i+1}^{(n)}}$ in $m$ smaller intervals $\ivff{u_k,u_{k+1}}$ such that $u_{k+1} = u_k + \Delta t_n$ for all $k\in\brc{0,\dotsc,m}$.
  
  \noindent Let us then consider the events
  \[
    E_i^{(n)} = \bigcap_{k=1}^{m} \brca{ \abs{ X_{u_k} - X_{u_{k-1}} } < \rho_n^{H_i^{(n)}+\eps} }.
  \]
  We estimate an upper bound for $\pr{E_i^{(n)}}$,
  \begin{align*}
    \pr{E_i^{(n)}}
    &= \prbb{ \bigcap_{k=1}^{m} \brca{ \abs{ X_{u_k} - X_{u_{k-1}} } < \rho_n^{H_i^{(n)}+\eps} } } \\
    &= \espbb{ \prod_{k=1}^{m-1} \indi_{ \brcb{ \abs{ X_{u_k} - X_{u_{k-1}} } < \rho_n^{H_i^{(n)}+\eps} } } \espcB{ \indi_{ \brcb{ \abs{ X_{u_m} - X_{u_{m-1}} } < \rho_n^{H_i^{(n)}+\eps}  } } }{W_x , x\leq u_{m-1} } }
  \end{align*}
  Let consider the last term
  \begin{align*}
    &\prcb{ \abs{ X_{u_m} - X_{u_{m-1}} } < \rho_n^{H_i^{(n)}+\eps} }{W_x , x\leq u_{m-1} } \\
    &= \prcB{ \absb{ C\pthb{H(u_m)} \int_{u_{m-1}}^{u_m} (u_m - u)^{H(u_m)-1/2} \dt W_u + Y_m } < \rho_n^{H_i^{(n)}+\eps} }{W_x , x\leq u_{m-1} },
  \end{align*}
  where 
%  $
%    Y_m = C\pthb{H(u_m)} \int_{-\infty}^{u_{m-1}} K\pthb{ u, u_m, H(u_m) } \dt W_u - C\pthb{H(u_{m-1})} \int_{-\infty}^{u_{m-1}} K\pthb{ u, u_{m-1}, H(u_{m-1}) } \dt W_u.
%  $
% 
  $
    Y_m = \int_{-\infty}^{u_{m-1}} \bktb{ C\pthb{H(u_m)} K\pthb{ u, u_m, H(u_m) } - C\pthb{H(u_{m-1})} K\pthb{ u, u_{m-1}, H(u_{m-1}) } } \dt W_u.
  $
  
  We note that $Y_m$ is $\sigma\brca{W_x , x\leq u_{m-1}}$-measurable, whereas the stochastic integral on the interval $\ivff{u_{m-1},u_m}$ is independent of $\sigma\brca{W_x , x\leq u_{m-1}}$.
  Furthermore, this last term is a centered Gaussian random variable with the following variance:
  \begin{align*}
    \sigma_m^2
    = C\pthb{H(u_m)}^2 \int_{u_{m-1}}^{u_m} (u_m-u)^{2H(u_m)-1} \dt u 
    = \frac{ C\pthb{H(u_m)}^2 }{ 2H(u_m) } \pthb{\Delta t_n}^{2H(u_m)}.
  \end{align*}
  Therefore, we obtain the inequality
  \begin{align*}
    \prcb{ \abs{ X_{u_m} - X_{u_{m-1}} } < \rho_n^{H_i^{(n)}+\eps} }{W_x , x\leq u_{m-1} }
    &= \frac{1}{\sigma_m\sqrt{2\pi}} \int_{-\rho_n^{H_i^{(n)}+\eps}}^{\rho_n^{H_i^{(n)}+\eps}} \exp\pthbb{-\frac{(u -Y_m )^2}{2\sigma_m^2}} \dt u \\
    &\leq \frac{2}{\sqrt{2\pi}} \frac{\rho_n^{H_i^{(n)}+\eps}}{\sigma_m}. \\
  \end{align*}
  Using previous expressions, we get (where $K$ is positive constant)
  \begin{align*}
    \frac{2}{\sqrt{2\pi}} \frac{\rho_n^{H_i^{(n)}+\eps}}{\sigma_m}
    &= \frac{ 2\sqrt{H(u_m)} }{ \sqrt{\pi}C\pthb{H(u_m)} } \frac{\rho_n^{H_i^{(n)}+\eps}}{\Delta t_n^{H(u_m)}} \\
    &= \frac{ 2\sqrt{H(u_m)} }{ \sqrt{\pi}C\pthb{H(u_m)} } \rho_n^{H_i^{(n)}-H(u_m)+\eps(1-H(u_m))} 
    \leq 2^{-n\eps(1-b) + K},
  \end{align*}
  as $H$ and $C$ are continuous, $H(u_m)\leq H_i^{(n)}$ and $H(t)\in\ivff{a,b}$ for all $t\in\R_+$. 
  Thus, by induction on $k\in\brc{1,\dotsc,m}$, we have
  \begin{align*}
    \pr{E_i^{(n)}}
    = \prbb{ \bigcap_{k=1}^{m} \brca{ \abs{ X_{u_k} - X_{u_{k-1}} } < \rho_n^{H_i^{(n)}+\eps} } } 
    \leq 2^{(-n\eps(1-b) + K)m} 
    \leq 2^{(-n\eps(1-b) + K)(2^{n\eps}-1)}.
  \end{align*}
  Therefore, we obtain:
  \begin{align*}
    %&\prb{\exists u,v\in\ivff{t_i^{(n)},t_i^{(n+1)}} : \abs{X_u - X_v} \geq \rho_n^{H_i^{(n)}+\eps}} \\
    &\prb{\forall u,v\in\ivff{t_i^{(n)},t_i^{(n+1)}} : \abs{X_u - X_v} < \rho_n^{H_i^{(n)}+\eps}} \\
    &\quad\leq \prbb{ \bigcap_{k=1}^{m} \brca{ \abs{ X_{u_k} - X_{u_{k-1}} } < \rho_n^{H_i^{(n)}+\eps} } }
    \leq 2^{(-n\eps(1-b) + K)(2^{n\eps}-1)}.
  \end{align*}
  Finally, if we consider the event $A_N$:
  \begin{align*}
    \pr{A_N^c}
    &= \prbb{ \bigcup_{n\geq N} \bigcup_{0 \leq i < 2^{n+p}} \brcB{\forall u,v\in\ivff{t_i^{(n)},t_i^{(n+1)}} : \abs{X_u - X_v} < \rho_n^{H_i^{(n)}+\eps}} } \\
    &\leq \sum_{n\geq N} \sum_{0 \leq i < 2^{n+p}} \prb{\forall u,v\in\ivff{t_i^{(n)},t_i^{(n+1)}} : \abs{X_u - X_v} < \rho_n^{H_i^{(n)}+\eps}} \\
    &\leq \sum_{n\geq N} 2^{(-n\eps(1-b) + K)(2^{n\eps}-1)+n+p} 
    \leq \widetilde K 2^{-N}.
  \end{align*} 
  Therefore, using Borel-Cantelli Lemma on $(A_N^c)_{N\in\N}$, we prove the expected result.
\end{proof}

\begin{proposition} \label{prop:mbm_incr}
  Let $X$ be a multifractional Brownian motion, satisfying the hypothesis $\Hi_\beta$. Then, almost surely, for any $T\geq 0$ and all $\eps>0$, there exists $h(\omega)>0$ such that for all $\rho\leq h(\omega)$ and $t\in\ivff{0,T}$, we have:
  \[
    \sup_{u,v\in B(t,\rho)} \brcB{ \abs{X_u - X_v} } \geq \rho^{H(t)+\eps}.
  \]
\end{proposition}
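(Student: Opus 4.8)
The plan is to deduce this continuous, uniform statement from the dyadic Lemma \ref{lemma:mbm_incr}, whose probabilistic content (a Borel--Cantelli estimate on oscillations over level-$n$ dyadic intervals) already does the hard work. First I would fix $T\geq 0$ and $\eps>0$, choose an integer $p$ large enough that $2^p > T+1$, and apply Lemma \ref{lemma:mbm_incr} with this $p$ and with $\eps/2$ in place of $\eps$. This yields an almost sure event and a random index $N(\omega)$ such that for every $n\geq N(\omega)$ and every dyadic interval $\ivff{t_i^{(n)},t_{i+1}^{(n)}}$ with $0\leq i<2^{n+p}$ there are points $u,v$ in that interval with $\abs{X_u-X_v}\geq \rho_n^{H_i^{(n)}+\eps/2}$. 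Crucially, $N(\omega)$ is uniform over all such $i$, so it does not depend on the eventual target point $t$.

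Next I would pass from a ball to a dyadic interval it contains. Given $t\in\ivff{0,T}$ and a small $\rho$, I would pick the level $n=n(\rho)$ with $2\cdot 2^{-n}\leq\rho<4\cdot 2^{-n}$, so that $2^{-n}$ is comparable to $\rho$. Since $t\geq 0$, the right half $\ivff{t,t+\rho}$ lies in $B(t,\rho)\cap\ivfo{0,+\infty}$ and has length $\rho\geq 2\cdot 2^{-n}$; hence it contains a full level-$n$ dyadic interval $I=\ivff{t_i^{(n)},t_{i+1}^{(n)}}$, and for $\rho$ small enough one has $I\subset\ivff{0,2^p}$, i.e. $0\leq i<2^{n+p}$. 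Applying the dyadic lemma to $I$ produces $u,v\in I\subset B(t,\rho)$ with $\abs{X_u-X_v}\geq \rho_n^{H_i^{(n)}+\eps/2}$, so that $\sup_{u,v\in B(t,\rho)}\abs{X_u-X_v}\geq \rho_n^{H_i^{(n)}+\eps/2}$.

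It remains to compare $\rho_n^{H_i^{(n)}+\eps/2}=2^{-n(H_i^{(n)}+\eps/2)}$ with the target $\rho^{H(t)+\eps}$, and this is where the hypothesis $\Hi_\beta$ enters. Since $H$ is $\beta$-H\"older with constant $C_H$ and every point of $I$ is within $\rho$ of $t$, I get $H_i^{(n)}=\sup_{u\in I}H(u)\leq H(t)+C_H\rho^\beta$. Taking logarithms in the desired inequality and using that $2^{-n}$ and $\rho$ are comparable, the claim reduces to
\[
  n\pthb{(H(t)-H_i^{(n)})+\tfrac{\eps}{2}} \geq 2\pthb{H(t)+\eps},
\]
up to harmless constants. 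The H\"older bound gives $H(t)-H_i^{(n)}\geq -C_H\rho^\beta\geq -C_H 4^\beta 2^{-n\beta}$, so the bracket on the left exceeds $\eps/4$ once $n$ is large, while the right-hand side is bounded by $2(b+\eps)$. Hence the inequality holds for all $n$ beyond a threshold depending only on $\eps,\beta,C_H$ and $b=\sup H$ --- in particular \emph{not} on $t$. Choosing $h(\omega)$ so that the associated level satisfies $n\geq N(\omega)$ and exceeds this threshold gives the single radius that works simultaneously for all $t\in\ivff{0,T}$.

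The main obstacle is precisely this last, uniform comparison: the dyadic lemma controls oscillations in terms of the \emph{supremum} $H_i^{(n)}$ of the regularity function over the interval, whereas the proposition is phrased in terms of the pointwise value $H(t)$. Absorbing the discrepancy $H_i^{(n)}-H(t)=O(\rho^\beta)$ --- together with the constant factor relating $2^{-n}$ to $\rho$ --- into the slack $\eps/2$ left in the exponent, \emph{uniformly in} $t\in\ivff{0,T}$, is the crux; it works exactly because $\Hi_\beta$ forces $H$ to vary slowly and because the Borel--Cantelli threshold $N(\omega)$ of Lemma \ref{lemma:mbm_incr} is already uniform in the dyadic index.
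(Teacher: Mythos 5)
Your proposal is correct and follows essentially the same route as the paper: both deduce the statement from the dyadic Lemma \ref{lemma:mbm_incr} by locating a full level-$n$ dyadic interval inside $B(t,\rho)$ with $2^{-n}$ comparable to $\rho$, and then absorb the discrepancy between $H_i^{(n)}$ and $H(t)$ (controlled by the continuity of $H$ under $\Hi_\beta$) into the slack in the exponent, uniformly in $t$. Your bookkeeping of the $\eps/2$ versus $\eps$ slack is in fact slightly more careful than the paper's, which leaves a multiplicative constant $K$ to be absorbed the same way.
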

\begin{proof}
  Let $T>0$ and $\eps>0$, there exists $p\in\N$ such that $T \leq 2^p$. Using notations from Lemma \ref{lemma:mbm_incr}, we consider $h(\omega) = 2^{-N(\omega)}$. Then, for every $\rho\leq h(\omega)$ and $t\in\ivff{0,T}$, there exist $n,k\in\N$ such that $2^{-(n+1)} \leq \rho \leq 2^{-n}$ and $\ivff{k 2^{-(n+1)},(k+1) 2^{-(n+1)}}\subseteq\ivff{t-\rho,t+\rho}$.
  
  Then, from Lemma \ref{lemma:mbm_incr}, there exists $u,v\in\ivff{k 2^{-(n+1)},(k+1) 2^{-(n+1)}}$ such that
  \[
    \abs{X_u - X_v} \geq 2^{-(n+1)\pth{ H_{k,n+1}+2\eps }}.
  \]
  As $H$ is uniformly continuous on the interval $\ivff{0,T}$, $h(\omega)$ can be chosen small enough such that for all $n,k\in\N$, $n\geq N$, $H_{k,n}-H(t) \leq \eps$ for every $t\in\ivff{k2^{-n},(k+1)2^{-n}}$.
  
  \noindent Therefore, we have
  \[
    \abs{X_u - X_v} \geq K 2^{-n\pth{ H_{k,n+1}+3\eps }} \geq K \rho^{ H(t)+\eps },
  \]
  which proves the result.
\end{proof}

\begin{remark}
  Close results from Proposition \ref{prop:mbm_incr} have been previously obtained in the literature, usually based on the study of local times. Thereby, inequality $(8.8.26)$ in \cite{Adler(1981)} is related to our result in the particular case of fractional Brownian motion.
  
  Furthermore, Theorem 3.6 in \cite{Ayache.Shieh.ea(2011)} states that for all $\delta>0$, $\eps>0$, 
  \[
    \liminf_{\rho\rightarrow 0} \inf_{t\in\ivff{\delta,1}} \sup_{u\in B(t,\rho)} \frac{ \abs{X_t - X_u} }{ \rho^{\overline H+\eps} } > 0\quad\text{a.s.},
  \]
  where $\overline H = \max_{t\in\ivff{\delta,1}} H(t)$. This last property is slightly weaker than our result since Proposition \ref{prop:mbm_incr} is equivalent to 
  \[
    \liminf_{\rho\rightarrow 0} \inf_{t\in\ivff{0,1}} \sup_{u\in B(t,\rho)} \frac{ \abs{X_t - X_u} }{ \rho^{H(t)+\eps} } > 0 \quad\text{a.s. for all $\eps>0$.}
  \]
\end{remark}

We finally present the main result of this part which gives the classic 2-microlocal frontier of a time changed multifractional Brownian motion. As the proof is not modified compare to the martingale specific case, we only recall the major steps.  
The proof of the first side inequality does not change at all, we only use the uniform regularity of the mBm recalled previously in Proposition \ref{prop:mbm_2ml}. The converse inequality is also shown similarly, using Proposition \ref{prop:mbm_incr} proved below.
\begin{theorem} \label{th:mbm_comp}
  Let $X$ be a multifractional Brownian motion, satisfying $\Hi_\beta$ and $U$ be a continuous positive process. We denote $Z$ the compound process:
  \[
    \forall t\in\R_+;\quad Z_t = X_{U_t}.
  \]
  Then, with probability one, for all $t\in\R_+$, the pseudo 2-microlocal frontier of $Z$ at $t$ verifies the following equality:
  \[
    \forall s'\geq -\alpha_{Z,t};\quad \Sigma_{Z,t}(s') = H(U_t) \cdot \Sigma_{U,t}\ptha{\frac{s'}{H(U_t)}}.
  \]
  Consequently, with probability one, pointwise and local H\"older exponents satisfy:
  \[
    \forall t\in\R_+; \quad\alpha_{Z,t} = \psd{\alpha}_{Z,t} = H(U_t)\cdot\psd{\alpha}_{U,t} \quad\text{and}\quad \widetilde\alpha_{Z,t} = \widetilde{\psd{\alpha}}_{Z,t} = H(U_t)\cdot\widetilde{\psd{\alpha}}_{U,t}.
  \]
\end{theorem}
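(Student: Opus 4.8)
The plan is to mirror the two-sided argument used for Theorem~\ref{th:mg_vq}, reading the compound process $Z = X\circ U$ as a composition in which the multifractional Brownian motion $X$ plays the role of the Brownian motion and the continuous clock $U$ plays the role of the quadratic variation $\VQ{M}$. The only structural substitutions are that the uniform $\tfrac12$-regularity of Brownian motion is replaced by the uniform regularity of the mBm (Proposition~\ref{prop:mbm_2ml}), and that Lemma~\ref{lemma:bm_incr} is replaced by its mBm analogue, Proposition~\ref{prop:mbm_incr}. I would fix once and for all the probability-one event on which both of these statements hold simultaneously for \emph{every} inner time, and then argue sample path by sample path, establishing the two inequalities $\Sigma_{Z,t}(s')\geq H(U_t)\,\Sigma_{U,t}\ptha{s'/H(U_t)}$ and $\Sigma_{Z,t}(s')\leq H(U_t)\,\Sigma_{U,t}\ptha{s'/H(U_t)}$ separately.

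For the lower bound I would simply invoke the composition estimate of Corollary~\ref{cor:pseudo_2ml_time_change} with $g=X$, $f=U$ and $x=t$. Since Proposition~\ref{prop:mbm_2ml} supplies $\widetilde{\psd{\alpha}}_{X,\cdot}=H(\cdot)$ \emph{at all points at once} (almost surely), I may evaluate it at the possibly random value $U_t$ and read off $\widetilde{\psd{\alpha}}_{X,U_t}=H(U_t)$; the corollary then yields $\Sigma_{Z,t}(s')\geq H(U_t)\cdot\Sigma_{U,t}\ptha{s'/H(U_t)}$ directly. I would stress the point that makes this work without any independence hypothesis between $X$ and $U$: because the regularity of $X$ holds \emph{uniformly} over all inner times, plugging in the random point $U_t(\omega)$ requires no enlargement of the probability space, in contrast to the Dubins--Schwarz step in Theorem~\ref{th:mg_vq}.

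The converse inequality is the substantive part, and I would reproduce the structure of the upper-bound proof of Theorem~\ref{th:mg_vq}. After localising the outer time to a compact $[0,T]$, so that the inner times range over the compact set $U([0,T])$ on which Proposition~\ref{prop:mbm_incr} applies, I would for fixed $s'$ and $\eps>0$ choose sequences $s_n\leq t_n\to t$ realising the pseudo frontier of $U$, that is, with $\absb{U_{t_n}-U_{s_n}}$ of the order of $\abs{t_n-s_n}^{\Sigma_{U,t}(s')+\eps}\pthb{\abs{t-t_n}+\abs{t-s_n}}^{-s'}$. Over the image interval $U\pthb{[s_n,t_n]}$, whose length dominates $\absb{U_{t_n}-U_{s_n}}$ and whose centre tends to $U_t$, Proposition~\ref{prop:mbm_incr} supplies points $u_n,v_n$ with $\absb{X_{u_n}-X_{v_n}}\geq C\,\absb{U_{t_n}-U_{s_n}}^{H(U_t)+\eps}$, the exponent of the relevant inner point being replaced by $H(U_t)$ up to $\eps$ thanks to the joint continuity of $H$ and of $U$. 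Pulling $u_n,v_n$ back through $U$ by the intermediate value theorem produces $x_n,y_n\in[s_n,t_n]$ with $U_{x_n}=u_n$, $U_{y_n}=v_n$, whence $\absb{Z_{x_n}-Z_{y_n}}=\absb{X_{u_n}-X_{v_n}}$; the same $s'\geq0$ versus $s'\leq0$ dichotomy as in Theorem~\ref{th:mg_vq} then converts the growth of these increments into $\Sigma_{Z,t}(s')\leq H(U_t)\cdot\Sigma_{U,t}\ptha{s'/H(U_t)}$ after letting $\eps\to0$ along rationals.

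I expect the main obstacle to be precisely this pull-back step, because $U$ is only assumed continuous, not monotone: unlike the non-decreasing quadratic variation in Theorem~\ref{th:mg_vq}, a non-monotone $U$ forces me to work with the full image interval $U([s_n,t_n])$ and to verify that the comparison $\abs{t-x_n}+\abs{t-y_n}\asymp\abs{t-t_n}+\abs{t-s_n}$ survives, while simultaneously keeping track that the local exponent stays $H(U_t)+o(1)$ on the shrinking windows. Once the pseudo-frontier identity is established, the stated product formulas for the pointwise and local H\"older exponents follow by specialising $s'=0$ and using Definition~\ref{def:pseudo_exp}; the identification of the \emph{classic} exponents with the pseudo ones is obtained exactly as in Corollary~\ref{cor:eq_cl_psd}, noting on the one hand that $\widetilde{\psd{\alpha}}_{Z,t}=H(U_t)\,\widetilde{\psd{\alpha}}_{U,t}\leq H(U_t)<1$ places the frontier near $s'=0$ inside the domain $\sigma_{0,0}$ where the two characterisations coincide, and on the other hand invoking an analogue of the no-point-of-increase argument to guarantee $p_{Z,t}\geq 2$.
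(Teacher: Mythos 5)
Your proposal is correct and follows essentially the same route as the paper, which itself only sketches this proof as a direct transposition of Theorem \ref{th:mg_vq}: the lower bound via the composition estimate and the uniform regularity of the mBm (Proposition \ref{prop:mbm_2ml}, with no Dubins--Schwarz enlargement needed), and the upper bound via the sequence construction with Lemma \ref{lemma:bm_incr} replaced by Proposition \ref{prop:mbm_incr}. The extra care you take with the non-monotonicity of $U$ (pulling back through the image interval by the intermediate value theorem) addresses a point the paper glosses over, and it resolves correctly since the preimages $x_n,y_n$ still lie in $\ivff{s_n,t_n}$, so the comparisons of $\abs{x_n-y_n}$ and $\abs{t-x_n}+\abs{t-y_n}$ with the corresponding quantities for $s_n,t_n$ go through unchanged.
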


\begin{remark}
  Properties of time changed fractional Brownian motions have already been studied in the literature. It first appeared in \cite{Mandelbrot(1997)} because of its particular multifractal properties. The multifractal spectra is obtained in \cite{Riedi(2003)} using similar technics to the ones used in Theorem \ref{th:mbm_comp}.
\end{remark}

%%%%%%%%%%%%%%%%%%%%%%%%%%%%%%%%%%%%%%%%%%%%%%%%%%%%%%%%%%%%%%%%%%%%%%%%
%% 2-microlocal frontier of stochastic integrals
%%%%%%%%%%%%%%%%%%%%%%%%%%%%%%%%%%%%%%%%%%%%%%%%%%%%%%%%%%%%%%%%%%%%%%%%

\section{2-microlocal frontier of stochastic integrals} \label{sec:2ml_int}

As stated in the introduction section, we want to extend the characterization of the Wiener integral regularity obtained in \cite{Herbin.Levy-Vehel(2009)}. Let first recall this result. We define the stochastic process $X$ as the Wiener integral 
$
  X_t = \int_0^t \eta(u) \dt W_u,
$
where $\eta$ is an $L^2$-deterministic function. Then, according to Theorem 4.12 in \cite{Herbin.Levy-Vehel(2009)}, for all $t_0\in\R_+$, the pseudo 2-microlocal frontier of $X$ at $t_0$ is almost surely given by 
\[
  \forall s'\in\ivff{-\psd{\alpha}_{X,t},0};\quad \Sigma_{X,t_0}(s') = \frac{1}{2} \Sigma_{\int_0^\sbullet \eta^2(u) \dt u,t_0}(2s').
\]

In this section, we will consider the extension of this equality to stochastic integrals
$
  X_t = \int_{0}^t H_u \dt M_u,
$
where $M$ is a continuous local martingale and $H$ is a continuous progressive stochastic process. Here, the process $H$ is supposed to be continuous to avoid technical problems on the definition of its 2-microlocal frontier, but this hypothesis could be weakened with still the same results if one wants to consider a larger class of integrands.

We begin by expressing an equality which a straight forward consequence of our previous analysis of martingales regularity.
\begin{theorem} \label{th:stoc_int_ext}
  Let $\brc{X_t; t\in\R_+}$ be defined by the stochastic integral
  \[
     \forall t\in\R_+;\quad X_t = \int_{0}^t H_u \dt M_u,
  \]
  where $M$ is a continuous local martingale and $H$ is a continuous progressive stochastic process. Then, with probability one, for all $t\in\R_+$, the pseudo 2-microlocal frontier of $X$ at $t$ is equal to
  \[
    \forall s'\geq -\psd{\alpha}_{X,t};\quad \Sigma_{X,t}(s') = \frac{1}{2}\Sigma_{\int_{0}^\sbullet H_u^2 \dt \VQ{M}_u,t}\ptha{{2s'}}.
  \]
\end{theorem}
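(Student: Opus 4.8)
The plan is to recognize that a stochastic integral against a continuous local martingale is itself a continuous local martingale, and then invoke Theorem~\ref{th:mg_vq} directly. Indeed, if $M$ is a continuous local martingale and $H$ is a continuous progressive process, then $X_t = \int_0^t H_u\,\dt M_u$ is again a continuous local martingale (this is a standard fact from the theory of stochastic integration, as found in \cite{Revuz.Yor(1999)}). Therefore Theorem~\ref{th:mg_vq} applies verbatim to $X$, yielding almost surely for all $t\in\R_+$ that
\[
  \forall s'\geq -\psd{\alpha}_{X,t};\quad \Sigma_{X,t}(s') = \frac{1}{2}\Sigma_{\VQ{X},t}\ptha{{2s'}}.
\]

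The remaining task is then purely to identify the quadratic variation $\VQ{X}$. The key step is the classical formula for the quadratic variation of a stochastic integral: if $X_t = \int_0^t H_u\,\dt M_u$, then
\[
  \VQ{X}_t = \int_0^t H_u^2\,\dt\VQ{M}_u,
\]
which again is a standard result in the theory of continuous semimartingales. Substituting this expression into the frontier identity above gives exactly the claimed equality
\[
  \forall s'\geq -\psd{\alpha}_{X,t};\quad \Sigma_{X,t}(s') = \frac{1}{2}\Sigma_{\int_{0}^\sbullet H_u^2\,\dt\VQ{M}_u,t}\ptha{{2s'}}.
\]

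I do not expect any genuine obstacle here, since both ingredients — that $X$ is a continuous local martingale and that its quadratic variation is $\int_0^{\sbullet} H_u^2\,\dt\VQ{M}_u$ — are textbook facts. The only point requiring mild care is ensuring the stochastic integral is well-defined and that $X$ is genuinely a continuous local martingale: this needs $H$ to be in the appropriate class of integrands (locally bounded, which holds since $H$ is continuous and progressive), so that $\int_0^t H_u^2\,\dt\VQ{M}_u < \infty$ almost surely for all $t$. Under the continuity hypothesis on $H$ this is immediate. Thus the theorem is essentially an immediate corollary of Theorem~\ref{th:mg_vq} combined with the standard computation of $\VQ{X}$, and the proof reduces to citing these two facts and composing them.
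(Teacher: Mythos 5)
Your proposal is correct and follows exactly the paper's own argument: the paper likewise observes that $X$ is a continuous local martingale, invokes Theorem~\ref{th:mg_vq}, and substitutes the standard identity $\VQ{\int_{0}^{\sbullet} H_u \,\dt M_u}_t = \int_{0}^t H_u^2 \,\dt \VQ{M}_u$. Your additional remark about $H$ being locally bounded (hence an admissible integrand) is a sensible bit of care that the paper leaves implicit.
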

\begin{proof}
  This result is a simple application of Theorem \ref{th:mg_vq}, using the well-known equality
  \[
    \forall t\in\R_+;\quad\VQ{\int_{0}^\sbullet H_u \dt M_u}_t = \int_{0}^t H_u^2 \dt \VQ{M}_u.
  \]
\end{proof}

Theorem \ref{th:stoc_int_ext} is obviously an improvement of the result stated on the Wiener integral. First, we obtain an equality for any stochastic integral with respect to a continuous local martingale. Furthermore, we extend the equality to every $s'$ positive. Finally, a more subtle enhancement lies in the fact that the Theorem \ref{th:stoc_int_ext} gives a uniform almost sure result, i.e. "almost surely for all $t\in\R_+$", whereas the original one is not. We present at the end of this section an example where the difference between these two kind of results appears.\vspar

The remaining of this section is devoted to a characterization of the pseudo 2-microlocal frontier of stochastic integral using both regularities of the martingale $M$ and the integrand $H$. We begin a technical lemma related to the quadratic variation regularity.
\begin{lemma} \label{lemma:stoch_VQ}
  The pseudo 2-microlocal frontier of the increasing process $A = \int_{0}^\sbullet H_s^2 \dt \VQ{M}_s$ satisfies, for all $\omega\in\Omega$ and $t\in\R_+$, 
  \begin{enumerate}
  	\item if $H_t(\omega) \neq 0$,
  	\[
  	  \forall s'\geq-\psd{\alpha}_{A,t};\quad \Sigma_{A,t}(s') = \Sigma_{\VQ{M},t}(s');
  	\]
  	\item if $H_t(\omega) = 0$,
  	\[
  	  \forall s'\geq-\psd{\alpha}_{A,t};\quad \Sigma_{A,t}(s') \geq \Sigma_{\VQ{M},t}(s' + 2\psd{\alpha}_{H,t}),
  	\]
  	where $\psd{\alpha}_{H,t}$ is the pseudo pointwise exponent of $H$ at $t$.
  \end{enumerate}
\end{lemma}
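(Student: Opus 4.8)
The statement is deterministic, so I would fix $\omega\in\Omega$ and $t\in\R_+$ and argue for the continuous non-decreasing functions $s\mapsto\VQ{M}_s(\omega)$ and $s\mapsto A_s(\omega)$. The guiding idea is that $A_v-A_u=\int_u^v H_s^2\,\dt\VQ{M}_s$ is an $H^2$-weighted increment of $\VQ{M}$, so a pointwise control of $H^2$ near $t$ should transfer directly into a comparison of the two frontiers. Since $A$ and $\VQ{M}$ are non-decreasing I may always take $u\leq v$ when estimating increments; moreover, for $s'\geq-\psd{\alpha}_{A,t}$ the relevant frontier values are non-negative, so only the spaces $\psd{C}^{\sigma,s'}_t$ with $\sigma\geq0$ intervene and I can work throughout with the elementary increment characterization of the first case of Definition \ref{def:pseudo_spaces}.

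For the first assertion ($H_t\neq0$), I would use the continuity of $H$ to produce constants $0<c\leq C$ and $\rho>0$ with $c\leq H_s^2\leq C$ for all $s\in B(t,\rho)$. Integrating against $\dt\VQ{M}$ over $[u,v]$ then gives, for $u\leq v$ in $B(t,\rho)$,
\[
  c\,\pthb{\VQ{M}_v-\VQ{M}_u}\leq A_v-A_u\leq C\,\pthb{\VQ{M}_v-\VQ{M}_u}.
\]
Hence the increments of $A$ and $\VQ{M}$ are equivalent up to multiplicative constants near $t$, and since membership in $\psd{C}^{\sigma,s'}_t$ is insensitive to such constants, $A\in\psd{C}^{\sigma,s'}_t$ if and only if $\VQ{M}\in\psd{C}^{\sigma,s'}_t$. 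Taking the supremum over $\sigma$ yields $\Sigma_{A,t}(s')=\Sigma_{\VQ{M},t}(s')$.

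For the second assertion ($H_t=0$), I would fix $\eps>0$ and invoke Definition \ref{def:pseudo_exp}: since $H_t=0$, there exist $C_H,\rho>0$ with $\abs{H_s}\leq C_H\abs{s-t}^{\psd{\alpha}_{H,t}-\eps}$ on $B(t,\rho)$. For $u\leq v$ in $B(t,\rho)$ and $s\in\ivff{u,v}$ one has $\abs{s-t}\leq\abs{u-t}+\abs{v-t}$, whence
\[
  A_v-A_u=\int_u^v H_s^2\,\dt\VQ{M}_s\leq C_H^2\,\pthb{\abs{u-t}+\abs{v-t}}^{2(\psd{\alpha}_{H,t}-\eps)}\pthb{\VQ{M}_v-\VQ{M}_u}.
\]
I would then pick $\sigma<\Sigma_{\VQ{M},t}\pthb{s'+2(\psd{\alpha}_{H,t}-\eps)}$, so that $\VQ{M}\in\psd{C}^{\sigma,\,s'+2(\psd{\alpha}_{H,t}-\eps)}_t$, and substitute its increment bound above: the two powers of $\pthb{\abs{u-t}+\abs{v-t}}$ combine to $-s'$, proving $A\in\psd{C}^{\sigma,s'}_t$ and so $\Sigma_{A,t}(s')\geq\sigma$. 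Letting $\sigma$ increase to $\Sigma_{\VQ{M},t}\pthb{s'+2(\psd{\alpha}_{H,t}-\eps)}$ and then $\eps\to0$, the continuity of the frontier (Corollary \ref{cor:pseudo_properties}) gives the claimed $\Sigma_{A,t}(s')\geq\Sigma_{\VQ{M},t}(s'+2\psd{\alpha}_{H,t})$.

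The delicate point is the second part: I must route the quadratic decay of $H$ into a shift of the $s'$-argument of $\Sigma_{\VQ{M},t}$ rather than into the $\sigma$-exponent, and then absorb the unavoidable $\eps$-loss of the pseudo pointwise exponent through a limiting argument justified by the concavity and continuity of the frontier. The first part is essentially immediate once the two-sided bound on $H^2$ is in hand.
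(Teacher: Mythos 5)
Your proposal is correct and follows essentially the same route as the paper: a two-sided bound $C_1\leq H_s^2\leq C_2$ near $t$ for the first case, and the pointwise bound $\abs{H_s}=\abs{H_s-H_t}\leq C\abs{s-t}^{\psd{\alpha}_{H,t}-\eps}$ combined with $\abs{s-t}\leq\abs{u-t}+\abs{v-t}$ to shift the $s'$-argument in the second case, followed by $\eps\to0$ via continuity of the frontier. The only difference from the paper's write-up is that you parametrize so the shift lands directly on the argument of $\Sigma_{\VQ{M},t}$, whereas the paper records the intermediate inequality $\Sigma_{A,t}(s'-2\psd{\alpha}_{H,t}+\eps)\geq\Sigma_{\VQ{M},t}(s')-\eps$ and relabels afterwards; this is purely cosmetic.
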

\begin{proof}
  \hfill
  \begin{enumindent}
    \item Since $H$ is continuous and $H_t \neq 0$, there exist $C_1, C_2, \rho > 0$ such that for all $u\in B(t,\rho)$, we have $C_1\leq \abs{H_u}^2\leq C_2$. 
    
    Therefore, we observe that for all $u \leq v\in B(t,\rho)$,
    \[
      C_1 \abs{\VQ{M}_v - \VQ{M}_u} \leq \absa{ \int_0^v H_s^2 \dt\VQ{M}_s - \int_0^u H_s^2 \dt\VQ{M}_s } \leq C_2 \abs{\VQ{M}_v - \VQ{M}_u},
    \]
    Based on the definition of the pseudo 2-microlocal frontier, these two inequalities prove the first point.
  
    \item When $H_t=0$, we observe that for all $s'\geq-\psd{\alpha}_{\VQ{M},t}$ and $\eps>0$, there exists $C>0$ and $\rho>0$ such that for all $u\leq v\in B(t,\rho)$,
    \begin{align*}
      \absa{ \int_0^v H_s^2 \dt\VQ{M}_s - \int_0^u H_s^2 \dt\VQ{M}_s } 
      &= \absa{ \int_u^v (H_s - H_t)^2 \dt\VQ{M}_s } \\
      &\leq C \int_u^v \abs{s - t}^{2\psd{\alpha}_{H,t}-\eps} \dt\VQ{M}_s \\
      &\leq C \ptha{\VQ{M}_v - \VQ{M_u}} \pthb{\abs{u - t}+\abs{v - t}}^{2\psd{\alpha}_{H,t}-\eps} \\
      &\leq C \abs{u-v}^{\Sigma_{\VQ{M},t}(s')-\eps} \pthb{\abs{u - t}+\abs{v - t}}^{-s'+2\psd{\alpha}_{H,t}-\eps}
    \end{align*}
    which proves that
    \[
      \Sigma_{A,t}(s'-2\psd{\alpha}_{H,t}+\eps) \geq \Sigma_{\VQ{M},t}(s')-\eps,
    \]
    Hence, when $\eps\rightarrow 0$, the expected inequality is obtained using the continuity of the pseudo 2-microlocal frontier.
  \end{enumindent}
\end{proof}

We can now derive a lower bound for the pseudo 2-microlocal frontier of a stochastic integrals with respect to a continuous local martingale.
\begin{theorem} \label{th:2ml_stoch_int}
  Let $\brc{X_t; t\in\R_+}$ be defined by the stochastic integral
  \[
    \forall t\in\R_+;\quad X_t = \int_{0}^t H_u \dt M_u,
  \]
  where $M$ is a continuous local martingale and $H$ is a continuous progressive stochastic process.
  
  Then, there exists an event $\Omega_0$ such that $\pr{\Omega_0}=1$ and, for all $\omega\in\Omega_0$ and $t\in\R_+$, the pseudo 2-microlocal frontier of the stochastic integral $X$ satisfies
  \begin{enumerate}
  	\item if $H_t(\omega) \neq 0$,
  	\[
  	  \forall s'\geq-\psd{\alpha}_{X,t};\quad \Sigma_{X,t}(s') = \Sigma_{M,t}(s');
  	\]
  	\item if $H_t(\omega) = 0$,
  	\[
  	  \forall s'\geq-\psd{\alpha}_{X,t};\quad \Sigma_{X,t}(s') \geq \Sigma_{M,t}(s' + \psd{\alpha}_{H,t}),
  	\]
  	where $\psd{\alpha}_{H,t}$ is the pseudo pointwise exponent of $H$ at $t$.
  \end{enumerate}
\end{theorem}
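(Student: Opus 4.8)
The strategy is to combine Theorem~\ref{th:stoc_int_ext}, which already expresses $\Sigma_{X,t}$ in terms of the quadratic variation $A = \int_0^\sbullet H_u^2 \dt\VQ{M}_u$ via the exact relation $\Sigma_{X,t}(s') = \tfrac{1}{2}\Sigma_{A,t}(2s')$, with the regularity analysis of $A$ contained in Lemma~\ref{lemma:stoch_VQ}. The whole theorem thus reduces to transporting the two cases of Lemma~\ref{lemma:stoch_VQ} through the factor-$\tfrac{1}{2}$ rescaling $s'\mapsto\tfrac{1}{2}\Sigma_{A,t}(2s')$, together with the identity $\Sigma_{M,t}(s') = \tfrac{1}{2}\Sigma_{\VQ{M},t}(2s')$ coming from Theorem~\ref{th:mg_vq}. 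The event $\Omega_0$ is the almost-sure event on which both Theorem~\ref{th:mg_vq} and Theorem~\ref{th:stoc_int_ext} hold simultaneously for all $t\in\R_+$; Lemma~\ref{lemma:stoch_VQ} holds deterministically for every $\omega$, so it adds no further restriction.

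For the first case, $H_t(\omega)\neq 0$, I would chain the equalities directly. By Theorem~\ref{th:stoc_int_ext}, $\Sigma_{X,t}(s') = \tfrac{1}{2}\Sigma_{A,t}(2s')$; by the first point of Lemma~\ref{lemma:stoch_VQ}, $\Sigma_{A,t}(2s') = \Sigma_{\VQ{M},t}(2s')$; and by Theorem~\ref{th:mg_vq}, $\tfrac{1}{2}\Sigma_{\VQ{M},t}(2s') = \Sigma_{M,t}(s')$. Concatenating these three identities yields $\Sigma_{X,t}(s') = \Sigma_{M,t}(s')$, which is the desired equality. The only point to check is that the domain $s'\geq-\psd{\alpha}_{X,t}$ is consistent with the domains on which the three invoked results are valid; since in the nondegenerate case $A$ and $\VQ{M}$ have the same pseudo pointwise exponent, hence $X$ and $M$ do too (up to the factor $\tfrac12$), the ranges match.

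For the second case, $H_t(\omega)=0$, the same substitutions give an inequality rather than an equality. Starting again from $\Sigma_{X,t}(s') = \tfrac{1}{2}\Sigma_{A,t}(2s')$ and using the second point of Lemma~\ref{lemma:stoch_VQ}, namely $\Sigma_{A,t}(2s') \geq \Sigma_{\VQ{M},t}(2s' + 2\psd{\alpha}_{H,t})$, I obtain
\[
  \Sigma_{X,t}(s') \geq \tfrac{1}{2}\Sigma_{\VQ{M},t}\bigl(2(s'+\psd{\alpha}_{H,t})\bigr) = \Sigma_{M,t}(s'+\psd{\alpha}_{H,t}),
\]
where the final equality is again Theorem~\ref{th:mg_vq} applied at the shifted argument $s'+\psd{\alpha}_{H,t}$. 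This is exactly the claimed lower bound. The main subtlety here is bookkeeping of domains: one must verify that $s'+\psd{\alpha}_{H,t}$ lies in the range where the martingale identity of Theorem~\ref{th:mg_vq} applies and that the shift in Lemma~\ref{lemma:stoch_VQ} (which carries a factor $2$ because of the squaring $H^2$, matched against the factor $2$ in the martingale rescaling) is correctly aligned. I expect this alignment of the two factors of two to be the only place where care is genuinely required; everything else is a direct composition of already-proved statements.
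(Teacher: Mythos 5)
Your proposal is correct and follows essentially the same route as the paper: Theorem~\ref{th:stoc_int_ext} (equivalently Theorem~\ref{th:mg_vq} applied to $X$) to reduce to the quadratic variation $A=\int_0^\sbullet H_u^2\,\dt\VQ{M}_u$, Lemma~\ref{lemma:stoch_VQ} to compare $\Sigma_{A,t}$ with $\Sigma_{\VQ{M},t}$, and Theorem~\ref{th:mg_vq} again to convert $\tfrac12\Sigma_{\VQ{M},t}(2\cdot)$ into $\Sigma_{M,t}(\cdot)$. Your explicit attention to the alignment of the two factors of $2$ and to the domains of validity is, if anything, slightly more careful than the paper's own two-line argument.
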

\begin{proof}
  \hfill
  \begin{enumindent}
    \item When $H_t(\omega) \neq 0$, Lemma \ref{lemma:stoch_VQ} leads to 
    \[
  	  \forall s'\geq-\tfrac{ \psd{\alpha}_{\VQ{M},t} }{2};\quad \frac{1}{2}\Sigma_{\int_{0}^\sbullet H_u^2 \dt \VQ{M}_u,t}(2s') = \frac{1}{2}\Sigma_{\VQ{M},t}(2s').
  	\]
    Hence, the application of Theorem \ref{th:mg_vq} allows to obtain the expected equality.
    
    \item In the other case, $H_t(\omega) \neq 0$, the same Lemma \ref{lemma:stoch_VQ} implies
  	\[
  	  \forall s'\geq-\tfrac{ \psd{\alpha}_{\int_{0}^\sbullet H_u^2 \dt \VQ{M}_u,t} }{2};\quad \frac{1}{2}\Sigma_{\int_{0}^\sbullet H_u^2 \dt \VQ{M}_u,t}(2s') \geq \frac{1}{2}\Sigma_{\VQ{M},t}(2s' + 2\psd{\alpha}_{H,t}),
  	\]
  	which also induces the expected inequality, using Theorem \ref{th:mg_vq}.
  \end{enumindent}
\end{proof}

Theorem \ref{th:2ml_stoch_int} can be improved in the case of a stochastic integral with respect to Brownian motion since in this case, the integral with respect to the quadratic variation is reduced to a classic Lebesgue integral.
\begin{theorem} \label{th:2ml_stoch_int_bm}
  Let $\brc{X_t; t\in\R_+}$ be defined by the stochastic integral
  \[
    \forall t\in\R_+;\quad X_t = \int_{0}^t H_u \dt B_u,
  \]
  where $B$ is a Brownian motion and $H$ is a continuous progressive stochastic process.
  
    Then, there exists an event $\Omega_0$ such that $\pr{\Omega_0}=1$ and for all $\omega\in\Omega_0$ and $t\in\R_+$, the pseudo 2-microlocal frontier of the stochastic integral $X$ satisfies
  \begin{enumerate}
  	\item if $H_t(\omega) \neq 0$,
  	\[
  	  \forall s'\in\R;\quad \Sigma_{X,t}(s') = \Sigma_{B,t}(s') = \ptha{\frac{1}{2} + s'}\wedge\frac{1}{2};
  	\]
  	\item if $H_t(\omega) = 0$,
  	\[
  	  \forall s'\geq -\psd{\alpha}_{X,t};\quad \Sigma_{X,t}(s') = \ptha{\frac{1}{2} + \frac{\Sigma_{H^2,t}(2s')}{2}}\wedge\frac{1}{2},
  	\]
  	unless $H$ is locally equal to zero at $t$, which induces in that case: $\Sigma_{X,t}=+\infty$.
  \end{enumerate}
\end{theorem}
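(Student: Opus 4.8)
The plan is to reduce everything to the previous Theorem \ref{th:2ml_stoch_int} and to the explicit computation of the pseudo 2-microlocal frontier of the Lebesgue-type integral $A = \int_0^\sbullet H_u^2\,\dt u$, which is now a genuine integral rather than an integral against $\VQ{M}$ since the quadratic variation of Brownian motion is $\VQ{B}_t = t$. First I would record that $\VQ{X}_t = \int_0^t H_u^2\,\dt\VQ{B}_u = \int_0^t H_u^2\,\dt u$, so by Theorem \ref{th:stoc_int_ext} we have almost surely for all $t$ the equality $\Sigma_{X,t}(s') = \tfrac{1}{2}\Sigma_{A,t}(2s')$, where $A(t) = \int_0^t H_u^2\,\dt u$. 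The whole task is then to compute $\Sigma_{A,t}$ in each of the two cases, and the key tool for this is Theorem \ref{th:int_pseudo_frontier}, which gives the pseudo frontier of a primitive $F(t) = \int_0^t f(s)\,\dt s$ in terms of the value and the frontier of the integrand $f$. Here I would apply it with $f = H^2$.

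In the first case $H_t(\omega)\neq 0$, we have $f(t) = H_t^2 \neq 0$, so Theorem \ref{th:int_pseudo_frontier} yields $\Sigma_{A,t}(s') = (1+s')\wedge 1$. Substituting into the Dubins--Schwarz identity gives
\[
  \Sigma_{X,t}(s') = \tfrac{1}{2}\Sigma_{A,t}(2s') = \tfrac{1}{2}\pthb{(1+2s')\wedge 1} = \ptha{\tfrac{1}{2}+s'}\wedge\tfrac{1}{2},
\]
which is exactly $\Sigma_{B,t}(s')$, the Brownian frontier. This recovers conclusion (1) and confirms it holds for all $s'\in\R$, not merely $s'\geq-\psd{\alpha}_{X,t}$, because both sides are the full Brownian frontier. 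In the second case $H_t(\omega)=0$ but $H$ is not locally zero at $t$, the integrand $f=H^2$ vanishes at $t$ without being locally constant (note $H^2\geq 0$ forces $f(t)=0$ to be a local minimum, but not a locally constant one), so the middle branch of Theorem \ref{th:int_pseudo_frontier} applies and gives $\Sigma_{A,t}(s') = (\Sigma_{H^2,t}(s')+1)\wedge 1$. Plugging $2s'$ in place of $s'$ and dividing by two produces
\[
  \Sigma_{X,t}(s') = \tfrac{1}{2}\pthb{(\Sigma_{H^2,t}(2s')+1)\wedge 1} = \ptha{\tfrac{1}{2}+\tfrac{\Sigma_{H^2,t}(2s')}{2}}\wedge\tfrac{1}{2},
\]
as claimed. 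The locally-zero subcase is immediate: if $H\equiv 0$ near $t$ then $A$ is locally constant at $t$, and Theorem \ref{th:int_pseudo_frontier} gives $\Sigma_{A,t}=+\infty$, whence $\Sigma_{X,t}=+\infty$.

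The main obstacle I anticipate is verifying that the hypotheses of Theorem \ref{th:int_pseudo_frontier} are genuinely met along a single almost-sure event valid uniformly in $t$, rather than pointwise in $t$. The earlier Theorems \ref{th:mg_vq} and \ref{th:2ml_stoch_int} were proved with the quantifier ordering ``almost surely, for all $t$,'' and I must ensure that the application of the deterministic Theorem \ref{th:int_pseudo_frontier} is carried out $\omega$ by $\omega$ on that same full-measure event $\Omega_0$. This is legitimate because for each fixed $\omega\in\Omega_0$ the path $u\mapsto H_u(\omega)$ is an honest continuous deterministic function, so the purely deterministic dichotomy (value nonzero / value zero but not locally constant / locally zero) can be read off its trajectory, and Theorem \ref{th:int_pseudo_frontier} then applies sample-path-wise. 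A secondary subtlety is that Theorem \ref{th:int_pseudo_frontier} is stated for the primitive of a \emph{general} continuous $f$, whereas here $f=H^2\geq 0$; I would briefly note this only strengthens the ``not locally constant'' characterization (a nonnegative continuous function with $f(t)=0$ that is not locally constant near $t$) and does not affect the formula, so the substitution is sound.
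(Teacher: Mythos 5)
Your proof is correct and follows essentially the same route as the paper: reduce to $\Sigma_{X,t}(s')=\tfrac{1}{2}\Sigma_{A,t}(2s')$ with $A=\int_0^\sbullet H_u^2\,\dt u$ via Theorem \ref{th:stoc_int_ext}, then compute $\Sigma_{A,t}$ from Theorem \ref{th:int_pseudo_frontier} according to whether $H_t$ is nonzero, zero but not locally zero, or locally zero. The only point to tighten is your justification for case (1) holding for all $s'\in\R$: the identity from Theorem \ref{th:stoc_int_ext} is only available for $s'\geq-\psd{\alpha}_{X,t}=-\tfrac{1}{2}$, and the range $s'\leq-\tfrac{1}{2}$ follows not because ``both sides are the Brownian frontier'' but from the concavity and derivative bounds of Corollary \ref{cor:pseudo_properties}, which is exactly the one-line extension the paper invokes.
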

\begin{proof}
  \hfill
  \begin{enumindent}
    \item To obtain the first equality, we use the previous Theorem \ref{th:2ml_stoch_int}, since we know that almost surely for all $t\in\R_+$,
  	\[
  	  \forall s'\in\R;\quad \Sigma_{B,t}(s') = \ptha{\frac{1}{2} + s'}\wedge\frac{1}{2}.
  	\]
  	The formula is extended to all $s'\leq-\tfrac{1}{2}$ using properties of the pseudo 2-microlocal frontier (concavity and derivatives in \ivff{0,1}) proved in Corollary \ref{cor:pseudo_properties}.
  	
  	\item When $H_t(\omega) = 0$, we observe that if $H$ is locally equal to zero, we obtain $\Sigma_{X,t}=+\infty$.
  	
  	In the opposite case, from Theorem \ref{th:classic_pseudo_frontiers}, the pseudo 2-microlocal frontier of the integral satisfies
  	\[
      \forall s'\in\R;\quad \Sigma_{\int_0^\sbullet H_s^2 \dt s,t}(s') = \pthb{ 1 + \Sigma_{H^2,t}(s') }\wedge 1.
    \]
    Hence, Theorem \ref{th:mg_vq} yields the expected formula.
  \end{enumindent}
\end{proof}

\begin{remark}
  The second points in Theorems \ref{th:2ml_stoch_int} and \ref{th:2ml_stoch_int_bm} do not contradict. Indeed, we know that for all $t\in\R_+$ such that $H_t(\omega)=0$,
  \[
    \forall s'\in\R; \quad \Sigma_{\VQ{B},t}(s' + 2\psd{\alpha}_{H,t}) = \pthb{1 + s' + 2\psd{\alpha}_{H,t}}\wedge 1.
  \]
  Furthermore, for all $s'\leq -2\psd{\alpha}_{H,t}$, still using properties of the pseudo 2-microlocal frontier from Corollary \ref{cor:pseudo_properties}, we obtain
  \[
    \Sigma_{H^2,t}(s') \geq s ' + 2\psd{\alpha}_{H,t},
  \]
  which proves the expected inequality for all $s'\in\R$
  \[
    \Sigma_{X,t}(s') = \ptha{\frac{1}{2} + \frac{\Sigma_{H^2,t}(2s')}{2}}\wedge\frac{1}{2} \ \geq\  \ptha{\frac{1}{2} + s' + \psd{\alpha}_{H,t}}\wedge \frac{1}{2} = \Sigma_{B,t}(s' + \psd{\alpha}_{H,t}).
  \]
\end{remark} 
Finally, we also notice that equalities presented in Theorems \ref{th:2ml_stoch_int} and \ref{th:2ml_stoch_int_bm} can be extended to the classic 2-microlocal frontier when the pseudo frontier satisfies Corollary \ref{cor:eq_cl_psd}. \vspar

As an application, we consider two examples of stochastic integral whose 2-microlocal frontier can be completely determined.

\begin{example}
  Let consider an Ornstein-Uhlenbeck process $X$. It admits the following representation:
  \[
    \forall t\in\R_+;\quad X_t = X_0e^{-\theta t} + \mu\pthb{ 1 - e^{-\theta t} } + \int_0^t \sigma e^{\theta(s-t)} \dt B_s,
  \]
  where $\theta>0$, $\mu$ and $\sigma>0$ are parameters and $B$ is a Brownian motion.
  
  Then, almost surely for all $t\in\R_+$, classic and pseudo 2-microlocal frontiers of $X$ are equal to
  \[
    \forall s'\in\R; \quad \sigma_{X,t}(s') = \Sigma_{X,t}(s') = \pthbb{ \frac{1}{2} + s'}\wedge\frac{1}{2},
  \]
  and in particular $\alpha_{X,t} = \widetilde\alpha_{X,t} = \frac{1}{2}$. \vspar
  
 This result is a consequence of Theorem \ref{th:2ml_stoch_int_bm} and the strict positivity of the exponential function. Therefore, the stochastic integral $\int_0^t \sigma e^{\theta s} \dt B_s$ has almost surely the following pseudo 2-microlocal frontier, for all $t \in\R_+$:
  \[
    \forall s'\in\R;\quad \Sigma_{\int_0^\sbullet \sigma e^{\theta s} \dt B_s,t}(s') = \pthbb{ \frac{1}{2} + s'}\wedge\frac{1}{2}.
  \]
  Furthermore, as $t\mapsto e^{-\theta t}$ is a positive $C^\infty$ function, we can easily see that other terms in the expression do not modify the regularity. Finally, Corollary \ref{cor:eq_cl_psd} extends the equality to the classic 2-microlocal frontier.
\end{example}

\begin{remark}
  As the Ornstein-Uhlenbeck process is a Gaussian process, the 2-microlocal frontier can be directly obtained using technics and results introduced in \cite{Herbin.Levy-Vehel(2009)}. Another way to get this result is to note that Lamperti transform of Brownian motion does not modify the regularity.
\end{remark}

The second example illustrates the impact of integrand's regularity on the 2-microlocal frontier of the integral.
\begin{example} \label{ex:mbm_int}
  Let $B$ be a Brownian motion and $X$ a multifractional Brownian motion adapted to $B$ filtration and which satisfies the hypothesis $\Hi_\beta$. We consider the following stochastic integral:
  \[
    \forall t\in\R_+;\quad Z_t = \int_0^t X_s \dt B_s.
  \]
  Then, there exists an event $\Omega_0$ such that $\pr{\Omega_0}=1$ and for all $\omega\in\Omega_0$ and $t\in\R_+$, classic and pseudo 2-microlocal frontier of this integral satisfy:
  \begin{enumerate}
  \item if $X_t(\omega) \neq 0$,
  	\[
  	  \forall s'\in\R;\quad \sigma_{Z,t}(s') = \Sigma_{Z,t}(s') = \pthbb{ \frac{1}{2} + s' }\wedge \frac{1}{2},
  	\]
    and in particular $\alpha_{Z,t} = \widetilde\alpha_{Z,t} = \frac{1}{2}$;
  	\item if $X_t(\omega) = 0$,
  	\[
  	  \forall s'\in\R;\quad \sigma_{Z,t}(s') = \Sigma_{Z,t}(s') = \pthbb{ \frac{1}{2} + H(t) + s' }\wedge \frac{1}{2},
  	\]
  	and in particular $\alpha_{X,t} = \frac{1}{2} + H(t)$ and $\widetilde\alpha_{X,t} = \frac{1}{2}$.
  \end{enumerate}
%  These two frontiers are depicted in figures \ref{fig:2ml_bm_mbm1} and \ref{fig:2ml_bm_mbm2}.
%  
%  \begin{figure}[!ht]
%    \centering
%    \subfloat[if $X_t(\omega)\neq 0$]{\label{fig:2ml_bm_mbm1}\includegraphics[width=0.49\textwidth]{img/2ml_bm_mbm1}}
%    \hfill
%    \subfloat[if $X_t(\omega)= 0$]{\label{fig:2ml_bm_mbm2}\includegraphics[width=0.49\textwidth]{img/2ml_bm_mbm2}}
%    
%    \caption{2-microlocal frontier of $\int_{0}^{\protect\sbullet} X_s \dt B_s$ at $t$}
%    \label{fig:2ml_bm_mbm}
%  \end{figure}
%  \vspar
  
  Before proving these results, we first note that the stochastic integral is well-defined, as for all $t\in\R_+$:
  \[
    \espbb{\int_0^t (X_s)^2 \dt s } = \int_0^t C(H(s)) s^{2H(s)} \dt s < \infty.
  \]
  According to Theorem \ref{th:2ml_stoch_int_bm}, we only have to characterize the pseudo 2-microlocal frontier $\Sigma_{(X)^2,t}$ in the case $X_t(\omega) = 0$. 
  \begin{enumindent}
    \item
    For the lower bound, the frontier of $X$ is known to be equal to
    \[
      \forall s'\in\R;\quad \Sigma_{X,t}(s') = \pthb{ H(t) + s' }\wedge H(t).
    \]
    Therefore, for all $s'\geq -H(t)$ and $\eps>0$, there exist $\rho>0$ and $C>0$ such that for all $u,v\in B(t,\rho)$,
    \begin{align*}
      \abs{X^2_u-X^2_v} 
      &= \abs{X_u-X_v} \cdot \abs{X_u+X_v} \\
      &\leq C \abs{u-v}^{\pth{ H(t)+s' }\wedge H(t) -\eps} \pthb{\abs{t-u}+\abs{t-v}}^{-s'} \cdot \pthb{\abs{t-u}+\abs{t-v}}^{H(t)-\eps} \\
      &= C \abs{u-v}^{\pth{ 2H(t)+\tilde s' }\wedge H(t) -\eps} \pthb{\abs{t-u}+\abs{t-v}}^{-\tilde s'-\eps},
    \end{align*}
    where $\tilde s'=s'-H(t) \geq -2H(t)$. Hence, it proves that for all $s'\geq -2H(t)$,
    \[
      \forall s'\geq -2H(t);\quad \Sigma_{(X)^2,t}(s') \geq \pthb{ 2 H(t) + s'}\wedge H(t).
    \]
    
    \item To obtain the upper bound, we note that Lemma \ref{lemma:pseudo_2ml_4} implies that $\Sigma_{(X)^2,t} \leq H(t)$.
    
    Then, for all $\eps>0$, there exists a sequence $(u_n)_n$ such that for all $n\in\N$, $\abs{X_{u_n} - X_t} \geq \abs{u_n-t}^{H(t)+\eps}$. Thus, for all $n\in\N$, we also obtain $\abs{X^2_{u_n} - X^2_t} = \abs{X_{u_n} - X_t}^2 \geq \abs{u_n-t}^{2H(t)+2\eps}$, which proves that 
    \[
      \forall s'\geq-2H(t);\quad \Sigma_{(X)^2,t}(s') \leq 2 H(t)+s'.
    \]
  \end{enumindent}
  Therefore, we have the equality
  \[
    \forall s'\in\R;\quad \Sigma_{(X)^2,t}(s') = \pthb{ 2 H(t) + s'}\wedge H(t), 
  \]
  where the extension to all $s'\leq -2H(t)$ is still obtained using properties of the pseudo 2-microlocal frontier (concavity and derivatives between $0$ and $1$).
  
  \noindent Hence, for all $s'\geq-\psd{\alpha}_{X,t}$, we get
  \[
    \Sigma_{Z,t}(s') = \ptha{\frac{1}{2} + \frac{\Sigma_{X^2,t}(2s')}{2}}\wedge\frac{1}{2} = \pthbb{ \frac{1}{2} + H(t) + s' }\wedge \frac{1}{2}.
  \]
  The same properties allow to extend this equality to all $s'\in\R$. Finally, Corollary \ref{cor:eq_cl_psd} is applied to get the 2-microlocal frontier.
\end{example}

%\begin{remark}
%  In the previous example, we can also obtain the multifractal spectra of the integral, in the case $X$ is fractional Brownian motion.
%  
%  Indeed, the pointwise exponent $\alpha_{Z,t}$ of $Z$ can only take two values
%  \[
%    \alpha_{Z,t} = 
%    \begin{cases}
%      \ \tfrac{1}{2} \quad &\text{if }X_t(\omega) \neq 0; \\
%      \ \tfrac{1}{2}+H \quad &\text{if }X_t(\omega) = 0.
%    \end{cases}
%  \]
%  Since it has been proved in the literature that the Hausdorff dimension of $X$'s zeros is $1-H$, we therefore obtain the multifractal spectra of $Z$.
%\end{remark}

\begin{remark}
  Example \ref{ex:mbm_int} also shows that there is a main difference between uniform results "a.s. $\forall t\in\R_+$" and simpler ones "$\forall t\in\R_+$ a.s.". Indeed, we know that for all $t\in\R_+$, almost surely $X_t\neq 0$. Therefore, for all $t\in\R_+$, the 2-microlocal frontier of the stochastic integral is almost surely equal to
  \[
  	  \forall s'\in\R;\quad \sigma_{\int_{0}^\sbullet X_s \dt B_s,t}(s') = \pthbb{ \frac{1}{2}+s' }\wedge \frac{1}{2},
  \]
  which is obviously a less precise characterization of the regularity.
\end{remark}

%%%%%%%%%%%%%%%%%%%%%%%%%%%%%%%%%%%%%%%%%%%%%%%%%%%%%%%%%%%%%%%%%%%%%%%%
%% Stochastic differential equations
%%%%%%%%%%%%%%%%%%%%%%%%%%%%%%%%%%%%%%%%%%%%%%%%%%%%%%%%%%%%%%%%%%%%%%%%

\section{Application to stochastic differential equations} \label{sec:2ml_SDE}

Stochastic differential equations (SDE) are often used as a mathematical representation of natural phenomenons. Local regularity constitutes an interesting indicator to verify how "correctly" a model fits observable data. In this section, we present a method to obtain the 2-microlocal frontier (or at least a lower bound for the local regularity) of SDE solutions.

To simplify our statement and the expressions, we focus on homogeneous diffusions which have the following form:
\[
  \dt X_t = a(X_t) \,\dt B_t + b(X_t) \,\dt t,
\]
where $a$ and $b$ are continuous functions with respective pseudo pointwise H\"older exponents $\psd{\alpha}_{a,x}$ and $\psd{\alpha}_{b,x}$ at $x\in\R$.

We assume the existence of a solution $X$ to this SDE. From Theorem \ref{th:2ml_stoch_int_bm}, there exists an event $\Omega_0$ such that $\pr{\Omega_0} = 1$ and for all $\omega\in\Omega_0$ and $t\in\R_+$, the pseudo 2-microlocal frontier of $X$ and thus the local regularity is completely characterized. Similarly, we have to distinguish different cases, as illustrated in equations \eqref{eq:sde1}, \eqref{eq:sde2}, \eqref{eq:sde3} and \eqref{eq:sde4} obtained on the pseudo 2-microlocal frontier $\Sigma_{X,t}$.

\begin{enumindent}
	\item If $a(X_t(\omega)) \neq 0$, Theorem \ref{th:2ml_stoch_int_bm} directly implies
    \begin{equation} \label{eq:sde1}
  	  \forall s'\in\R;\quad \Sigma_{X,t}(s') = \pthbb{\frac{1}{2} + s'}\wedge\frac{1}{2}.
  	\end{equation}
  \item If $a(X_t(\omega)) = 0$, but is not locally equal to zero, and $b(X_t(\omega))\neq 0$, the stochastic integral $\int_0^\sbullet a(X_s)\dt B_s$ satisfies
    \[
      \forall s'\in\R;\quad \Sigma_{\int_0^\sbullet a(X_s)\dt B_s,t}(s') = \pthbb{\frac{1}{2} + \frac{\Sigma_{a^2(X_\sbullet),t}(2s')}{2}}\wedge\frac{1}{2}.
    \]
    Furthermore, using Theorem \ref{th:int_pseudo_frontier}, as $b(X_t(\omega))\neq 0$ and $s\mapsto b(X_s(\omega))$ is continuous, we have 
    \[
      \forall s'\in\R;\quad \Sigma_{\int_0^\sbullet b(X_s)\dt s,t}(s') = \pth{1 + s'}\wedge 1.
    \]
    These two expressions lead to the following
    \[
      \forall s'\in\R;\quad \Sigma_{X,t}(s') = \pthbb{\frac{1}{2} + \frac{\Sigma_{a^2(X_\sbullet),t}(2s')}{2}}\wedge\pth{1 + s'}\wedge\frac{1}{2}
    \]
    and, in particular
    \[
      \forall s'\in\R;\quad \Sigma_{X,t}(s') \leq \pthb{1 + s'}\wedge\frac{1}{2} \quad\text{and}\quad \sigma_{X,t}(s') \leq \frac{1}{2}.
    \]
    We can easily check that $\psd{\alpha}_{a^2(X_\sbullet),t} \geq 2\psd{\alpha}_{a,X_t} \psd{\alpha}_{X,t}$. Thus, we get the following inequality for the negative part of the pseudo frontier:
    \[
      \forall s'\leq -\psd{\alpha}_{a^2(X_\sbullet),t};\quad \Sigma_{a^2(X_\sbullet),t}(s') \geq 2\psd{\alpha}_{a,X_t} \psd{\alpha}_{X,t} + s'.
    \]
    Using the previous inequalities, we obtain:
    \[
      \forall s'\in\R;\quad \Sigma_{X,t}(s') \geq \pthbb{\frac{1}{2} + \psd{\alpha}_{a,X_t} \psd{\alpha}_{X,t} + s'}\wedge\pth{1 + s'}\wedge\frac{1}{2},
    \]
    and, in particular
    \[
      \psd{\alpha}_{X,t} \geq \pthbb{\frac{1}{2} + \psd{\alpha}_{a,X_t} \psd{\alpha}_{X,t}}\wedge 1.
    \]
    Therefore, we have $\psd{\alpha}_{X,t} \geq \pthB{\frac{1}{2(1 - \psd{\alpha}_{a,X_t})_+}}\wedge 1$, which leads to the following estimation of the pseudo frontier of $X$
    \begin{equation} \label{eq:sde2}
       \forall s'\in\R;\quad \pthbb{\frac{1}{2(1 - \psd{\alpha}_{a,X_t})_+} + s'}\wedge\pthb{1 + s'}\wedge\frac{1}{2} \leq \Sigma_{X,t}(s') \leq \pth{1 + s'}\wedge\frac{1}{2}.
    \end{equation}
    In particular, we note that when $\psd{\alpha}_{a,X_t} \geq \frac{1}{2}$, we get
    \[
       \forall s'\in\R;\quad \Sigma_{X,t}(s') = \pth{1 + s'}\wedge\frac{1}{2}.
    \]
%    We can also obtain similar inequalities on the classic 2-microlocal frontier using the previous ones and the initial equation verified by $\sigma_{\int_0^\sbullet a(X_s)\dt B_s,t}$.
    
  \item If $a(X_t(\omega)) = 0$, but is not locally equal to zero, and $b(X_t(\omega)) = 0$, we similarly obtain the following inequality
    \[
      \forall s'\in\R;\quad \Sigma_{X,t}(s') = \pthbb{\frac{1}{2} + \frac{\Sigma_{a^2(X_\sbullet),t}(2s')}{2}}\wedge\pthB{1 + \Sigma_{b(X_\sbullet),t}(s')}\wedge\frac{1}{2},
    \]
    as $b(X_t(\omega))=0$ and using the translation property of the pseudo frontier. This inequality leads to
    \[
      \forall s'\in\R;\quad \Sigma_{X,t}(s') = \pthbb{\frac{1}{2} + \psd{\alpha}_{a,X_t} \psd{\alpha}_{X,t} + s'}\wedge\pthb{1 + \psd{\alpha}_{b,X_t} \psd{\alpha}_{X,t} + s'}\wedge\frac{1}{2}
    \]
    and in particular
    \[
      \psd{\alpha}_{X,t} \geq \pthbb{\frac{1}{2} + \psd{\alpha}_{a,X_t} \psd{\alpha}_{X,t}}\wedge\pthb{1 + \psd{\alpha}_{b,X_t} \psd{\alpha}_{X,t}}.
    \]
    Therefore, if $\psd{\alpha}_{a,X_t} < 1$ or $\psd{\alpha}_{b,X_t} < 1$, we obtain:
    \[
      \psd{\alpha}_{X,t} \geq \pthbb{\frac{1}{2(1 - \psd{\alpha}_{a,X_t})_+}}\wedge \pthbb{\frac{1}{(1 - \psd{\alpha}_{b,X_t})_+}},
    \]
    which leads to
    \begin{equation} \label{eq:sde3}
  	  \forall s'\in\R;\quad  \Sigma_{X,t}(s') \geq \pthbb{\frac{1}{2(1 - \psd{\alpha}_{a,X_t})_+} + s'}\wedge\pthbb{\frac{1}{(1 - \psd{\alpha}_{b,X_t})_+} + s'}\wedge\frac{1}{2}.
  	\end{equation}
    In the case $\psd{\alpha}_{a,X_t} \geq 1$ and $\psd{\alpha}_{b,X_t} \geq 1$, the previous inequalities imply that $\psd{\alpha}_{X,t} = +\infty$ and $\sigma_{X,t} = \Sigma_{X,t} = \frac{1}{2}$.
  \item Finally, if $a(X_\sbullet(\omega))$ is locally equal to zero at $t$, we locally obtain a classic differential equation $\dt X_t = b(X_t) \dt t$. Similarly to previous calculations, we get the following inequality:
    \begin{equation} \label{eq:sde4}
  	  \forall s'\in\R;\quad \Sigma_{X,t}(s') \geq \pthbb{\frac{1}{(1 - \alpha_{b,X_t})_+} + s'}\wedge 1.
  	\end{equation}
\end{enumindent}

We notice in the previous results that a self-regulating aspect appears in the regularity of diffusions, since the 2-microlocal frontier at $t$ can depend on the value of $a(X_t(\omega))$ and $b(X_t(\omega))$. The behaviour is also illustrated in the two following Examples \ref{ex:BESQ} and \ref{ex:heston}.

For sake of readability, we have only considered homogeneous diffusions. But, results could be easily extended to non-homogeneous diffusions
$
  \dt X_t = a(t,X_t,\omega) \,\dt B_t + b(t,X_t,\omega) \,\dt t.
$
In that case, lower bounds obtained would depend on the stochastic H\"older exponents of $a$ and $b$ along the two directions, i.e. $\pth{ \psd{\alpha}_{a(\sbullet,X_t),t}, \psd{\alpha}_{a(t,\sbullet),X_t} }(\omega)$ and  $\pth{ \psd{\alpha}_{b(\sbullet,X_t),t}, \psd{\alpha}_{b(t,\sbullet),X_t} }(\omega)$.

More generally, we can see that the previous methodology can be extended to any kind of stochastic differential equation to get, at least, a lower bound for the local regularity of its solution.\vspar

To end this section, we apply theses techniques to obtain the exact regularity in two different examples of SDE.
In the first one, we characterize the 2-microlocal frontier of the square of $\delta$-dimensional Bessel processes, which generalizes the result on $t\mapsto B_t^2$ described in the preliminaries.
\begin{example} \label{ex:BESQ}
  We know that for every $\delta > 0$ and $x\geq 0$, there exists a unique strong continuous solution to the following equation (see \cite{Revuz.Yor(1999)}):
  $
    Z_t = x + 2\int_0^t \sqrt{Z_s} \dt \beta_s + \delta t,
  $
  which is called the square of $\delta$-dimensional Bessel process started at $x$ ($BESQ^ \delta(x)$).
  
  Hence, for every $\delta> 0$ and $x\geq 0$, there exists an event $\Omega_0$ such that $\pr{\Omega_0}=1$ and for all $\omega\in\Omega_0$ and $t\in\R_+$, the 2-microlocal frontier of the $BESQ^ \delta(x)$ $Z$ is determined in the two following cases.
  \begin{enumerate}
  	\item If $Z_t(\omega) \neq 0$,
  	\[
  	  \forall s'\in\R;\quad \sigma_{Z,t}(s') = \Sigma_{Z,t}(s') = \pthbb{\frac{1}{2} + s'}\wedge\frac{1}{2},
  	\]
  	and in particular $\alpha_{Z,t} = \widetilde\alpha_{Z,t} = \tfrac{1}{2}$.
  	\item If $Z_t(\omega) = 0$, the pseudo frontier satisfies the equation
  	\[
      \Sigma_{Z,t}(s') = \ptha{\frac{1}{2} + \frac{\Sigma_{Z,t}(2s')}{2}}\wedge\pthb{1+s'}\wedge\frac{1}{2},
    \]
    which has a unique solution,
  	\[
  	  \forall s'\in\R;\quad \sigma_{Z,t}(s') = \Sigma_{Z,t}(s') = \pthb{1 + s'}\wedge\frac{1}{2},
  	\]
  	and in particular $\alpha_{Z,t} = 1$ and $\widetilde\alpha_{Z,t} = \tfrac{1}{2}$.
  \end{enumerate}\vsp
  
  These equalities are simple consequences of previous results on diffusions. Indeed, since $\alpha_{a,x} = \frac{1}{2}$ when $x=0$, Equations \eqref{eq:sde1} and \eqref{eq:sde2} prove the equalities on the pseudo frontier.
  
  To obtain the classic 2-microlocal frontier, we can apply Corollary \ref{cor:eq_cl_psd} on the stochastic integral $\int_0^t \!\! \sqrt{Z_s} \,\dt \beta_s$ and note that according to Definition \ref{eq:def_2ml_spaces}, the frontier is not modified when a polynomial (i.e. $\delta t$) is added. 
\end{example}

\begin{remark}
  As proved in \cite{Revuz.Yor(1999)}, if $\delta\geq 2$, the set $\brc{0}$ is transient and therefore, almost surely for any $t\in\R_+$, the 2-microlocal frontier is equal to 
  \[
  	\forall s'\in\R;\quad \sigma_{Z,t}(s') = \Sigma_{Z,t}(s') = \pthbb{\frac{1}{2} + s'}\wedge\frac{1}{2}.
  \]
  On the contrary, if $\delta < 2$, we know that $\brc{0}$ is recurrent, and the distinctions between $Z_t(\omega)\neq 0$ and $Z_t(\omega)=0$ is necessary.
\end{remark}

\begin{example} \label{ex:heston}
  In the second example, we obtain the 2-microlocal frontier of the Heston model. This model has been introduced in \cite{Heston(1993)} to describe the price of an asset which has a stochastic volatility and therefore, it consists of two correlated SDE. The interesting point in this model is that the regularity of the asset $S$ depends on the value of the volatility $\nu$, as proved below.

  In the Heston model, $S$ and $\nu$ are solutions of the following system of equations:
  \[
  \begin{cases}
    &\dt S_t = \mu S_t + \sqrt{\nu_t} S_t \dt W_t^S \\
    &\dt \nu_t = \kappa(\theta - \nu_t) \dt t + \xi \sqrt{\nu_t} \dt W_t^\nu,
  \end{cases}
  \]
  where $\mu, \kappa, \theta$ and $\xi$ are deterministic positive parameters and $(W_t^S,W_t^\nu)$ is a Brownian motion where $W^S$ and $W^\nu$ may be dependent.
  
  Then, there exists an event $\Omega_0$ such that $\pr{\Omega_0}=1$ and for all $\omega\in\Omega_0$ and $t\in\R_+$, 2-microlocal frontiers of $S$ and $\nu$ at $t$ are given by
  \begin{enumerate}
  	\item if $\nu_t(\omega) \neq 0$,
  	\[
  	  \forall s'\in\R;\quad \sigma_{S,t}(s') = \sigma_{\nu,t}(s') = \pthbb{\frac{1}{2} + s'}\wedge\frac{1}{2};
  	\]
  	\item if $\nu_t(\omega) = 0$,
  	\[
  	  \forall s'\in\R;\quad \sigma_{S,t}(s') = \sigma_{\nu,t}(s') = \pthb{1 + s'}\wedge\frac{1}{2}.
  	\]
  \end{enumerate}

  We obtain the 2-microlocal frontier of $\nu$ similarly to the previous example.
  Then, if we consider $S$, as $S$ is positive $\sqrt{\nu_t} S_t = 0$ if and only if $\nu_t = 0$. Therefore, in this case we obtain the equation
	\[
    \Sigma_{S,t}(s') = \ptha{\frac{1}{2} + \frac{\Sigma_{\nu,t}(2s')}{2}}\wedge\pthb{1+s'}\wedge\frac{1}{2},
  \]
  which leads to the expected solution.
\end{example}

\begin{remark}
  According to \cite{Albrecher.Schoutens.ea(2007)}, if the parameters satisfy the inequality $2\kappa\theta \geq \xi^2$, then the process $\nu$ is almost surely positive. Therefore, similarly to square of Bessel processes, the result is simplified in that case as $\nu_t=0$ almost surely never happens.
\end{remark}

These two examples show that it is possible to get an exact result, more precise than lower bounds obtained at the beginning of the section, when the expressions of functions $a$ and $b$ are explicit and when we are able to characterize the regularity of compositions $a\circ X$ and $b\circ X$.

Another point illustrated by these examples and previous results on diffusions is that the regularity of the solutions of these SDE are almost everywhere equal to $\frac{1}{2} + s'$ and change only at exceptional points (e.g. zeros of BESQ$^\delta$). Even if these equations were driven by martingales other than the Brownian motion, according to Theorems \ref{th:mg_vq} and \ref{th:2ml_stoch_int}, interesting regularities would still be exceptional. Nevertheless, many natural phenomenons have a regularity different from $\frac{1}{2}$ (see e.g. \cite{Feder(1988)}), and thus can not be represented by these classic diffusions. Therefore, an interesting next step would be to extend our work to stochastic differential equations driven by fractional (and multifractional) Brownian motion (see e.g. \cite{Nualart(2003)}), in order to exhibit the possibilities it brings in term of regularity.

%%%%%%%%%%%%%%%%%%%%%%%%%%%%%%%%%%%%%%%%%%%%%%%%%%%%%%%%%%%%%%%%%%%%%%%%
%% Appendix: proofs of technical results
%%%%%%%%%%%%%%%%%%%%%%%%%%%%%%%%%%%%%%%%%%%%%%%%%%%%%%%%%%%%%%%%%%%%%%%%
\appendix
\section{Proofs of technical results} \label{sec:appendix}

Appendices gather some deterministic and technical results which are used along the article. 

%% Pseudo 2-microlocal analysis %%
\subsection{Pseudo 2-microlocal analysis} \label{sec:app_pseudo}

Propositions and theorems related to pseudo 2-microlocal spaces and frontier are proved in this appendix. The first result is an extension of a well-known property on local H\"older exponents: values of the local exponent around a point $t_0$ have an influence on the pseudo 2-microlocal frontier of $f$ at $t_0$.

\begin{lemma} \label{lemma:pseudo_2ml_4}
  Let $f$ be a continuous function and $t_0$ be in \R. Then, the pseudo 2-microlocal frontier at $t_0$ satisfies:
  \[
    \forall s'\in\R;\quad \Sigma_{f,t_0}(s') \leq \liminf_{u\rightarrow t_0,u\neq t_0} \widetilde{\psd{\alpha}}_{f,u},
  \]
  where $\widetilde{\psd{\alpha}}_{f,u}$ is the pseudo local H\"older exponent of $f$ at $u$.
\end{lemma}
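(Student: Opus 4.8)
The plan is to exploit the direct increment characterization of $\psd{C}^{\sigma,s'}_{t_0}$ available for $\sigma>0$ (the first case of Definition \ref{def:pseudo_spaces}) and to observe that, once one leaves the base point $t_0$ for a fixed nearby point $u$, the weight $\pthb{\abs{x-t_0}+\abs{y-t_0}}^{-s'}$ degenerates into a harmless multiplicative constant. This converts the 2-microlocal estimate at $t_0$ into an ordinary local H\"older estimate at $u$, which is exactly what controls $\widetilde{\psd{\alpha}}_{f,u}$.

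First I would dispose of the trivial regime. Since $f$ is continuous, $\alpha=0$ always belongs to the set defining $\widetilde{\psd{\alpha}}_{f,u}$ in Definition \ref{def:pseudo_exp}, so the right-hand side is nonnegative and there is nothing to prove when $\Sigma_{f,t_0}(s')\leq 0$. Hence I may fix an arbitrary $\sigma$ with $0<\sigma<\Sigma_{f,t_0}(s')$. By definition of the pseudo frontier, $f\in\psd{C}^{\sigma,s'}_{t_0}$, and as $\sigma>0$ this supplies constants $C,\rho>0$ with
\[
  \abs{f(x)-f(y)}\leq C\abs{x-y}^{\sigma}\pthb{\abs{x-t_0}+\abs{y-t_0}}^{-s'}\qquad\text{for all }x,y\in B(t_0,\rho).
\]

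The core step is a localization around a generic point $u\in B(t_0,\rho)\setminus\brc{t_0}$. Writing $\delta=\abs{u-t_0}>0$ and restricting to $x,y\in B(u,r)$ with $r<\min(\delta/2,\rho-\delta)$, the triangle inequality gives $\abs{x-t_0}+\abs{y-t_0}\in\ivoo{\delta,3\delta}$, so the weight $\pthb{\abs{x-t_0}+\abs{y-t_0}}^{-s'}$ is bounded by a constant $C_\delta$ depending only on $\delta$ and $s'$, for either sign of $s'$. Consequently $\abs{f(x)-f(y)}\leq CC_\delta\abs{x-y}^{\sigma}$ on $B(u,r)$, which by Definition \ref{def:pseudo_exp} yields $\widetilde{\psd{\alpha}}_{f,u}\geq\sigma$. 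Since $u$ was an arbitrary point of $B(t_0,\rho)\setminus\brc{t_0}$, taking $\liminf$ as $u\to t_0$ gives $\liminf_{u\to t_0,\,u\neq t_0}\widetilde{\psd{\alpha}}_{f,u}\geq\sigma$, and letting $\sigma\uparrow\Sigma_{f,t_0}(s')$ closes the argument.

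The only point requiring care is the treatment of the weight for both signs of $s'$: I must keep $\abs{x-t_0}$ and $\abs{y-t_0}$ simultaneously bounded away from $0$ and from above, which is precisely what the constraint $r<\delta/2$ secures. Beyond this bookkeeping there is no genuine obstacle, since the claim is one-directional (an upper bound on $\Sigma_{f,t_0}$) and requires no construction of bad increments; it merely propagates the control near $t_0$ into uniform local control at each neighbouring point.
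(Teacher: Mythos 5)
Your proof is correct and rests on the same key observation as the paper's: for $x,y$ in a small ball around a fixed $u\neq t_0$, the weight $\pthb{\abs{x-t_0}+\abs{y-t_0}}^{-s'}$ is comparable to the constant $\abs{u-t_0}^{-s'}$, so the weighted 2-microlocal estimate at $t_0$ and the ordinary local H\"older estimate at $u$ control each other. You run this in the contrapositive direction (propagating the bound from $t_0$ to every nearby $u$, uniformly in the sign of $s'$), whereas the paper picks a sequence $u_n\to t_0$ realizing the $\liminf$ and propagates the divergence of the unweighted H\"older quotient at $u_n$ into divergence of the weighted quotient at $t_0$; the content is identical.
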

\begin{proof}
  Let $t_0$ be in \R. There exists a sequence $(u_n)_{n\in\N}$ which converges to $t_0$ and such that 
  \[
    \lim_{n\in\N} \widetilde{\psd{\alpha}}_{f,u_n} = \liminf_{u\rightarrow t_0,u\neq t_0} \widetilde{\psd{\alpha}}_{f,u} \eqdef \alpha_0.
  \]
  According to the definition of the pseudo local H\"older exponent, we know that for all $\eps>0$, $\rho > 0$ and $n\in\N$,
  \[
    \sup_{u,v\in B(u_n,\rho)} \frac{\absb{f(u) - f(v)}}{\abs{u-v}^{\widetilde{\psd{\alpha}}_{f,u_n}+\eps}} = +\infty.
  \]
  We use this equality to prove the upper bound on the pseudo 2-microlocal frontier.
  
  Let first note that when $s'\geq 0$, for all $u,v\in B(u_n,\rho)$,
  \[
    \frac{1}{ \pthb{\abs{u-t_0}+\abs{v-t_0}}^{-s'} } \geq \pth{ 2\abs{u_n-t_0} - 2\rho }^{s'} > \abs{ u_n - t_0 }^{s'},
  \]
  when $\rho$ is small enough. Therefore, according to the first equation, for all $n\in\N$, $s'\geq 0$ and $\rho,\eps>0$
  \[
    \sup_{u,v\in B(u_n,\rho)} \frac{\absb{f(u) - f(v)}}{\abs{u-v}^{\widetilde{\psd{\alpha}}_{f,u_n}+\eps} \pthb{\abs{u-t_0}+\abs{v-t_0}}^{-s'}} = +\infty,
  \]
  which leads to
  \[
    \sup_{u,v\in B(t_0,2\abs{u_n-t_0})} \frac{\absb{f(u) - f(v)}}{\abs{u-v}^{\widetilde{\psd{\alpha}}_{f,u_n}+\eps} \pthb{\abs{u-t_0}+\abs{v-t_0}}^{-s'}} = +\infty,
  \]
  proving the expected inequality as $\lim_{n\rightarrow+\infty} \widetilde{\psd{\alpha}}_{f,u_n} = \psd{\alpha}_0$.
  
  The case $s'\leq 0$ is treated similarly, using the inequality $\pthb{\abs{u-t_0}+\abs{v-t_0}}^{s'} \geq 1$  when $\abs{u_n-t_0}$ and $\rho$ are sufficiently small.
\end{proof}

We know prove Theorem \ref{th:classic_pseudo_frontiers} which links up the pseudo 2-microlocal frontier to the classic one in the following way:
\[
  \forall s'\in\R ;\quad \Sigma_{f,x_0}(s') = \sigma_{f,x_0}(s')\wedge\pth{s'+p_{f,x_0}}\wedge 1,
\]
unless $f$ is locally constant at $t_0$.

\begin{proof}[Proof of Theorem \ref{th:classic_pseudo_frontiers}]
  
  Let first note that in the particular case $f$ is locally constant at $t_0$, we easily observe that both classic and pseudo 2-microlocal frontiers are equal to $+\infty$. Therefore, we suppose from now that $f$ is not locally constant.

  \begin{enumindent}
	  \item Let first prove the following inequality
	  \[
      \forall s'\in\R ;\quad \Sigma_{f,t_0}(s') \leq 1.
    \]
    
    We proceed by contradiction: if there exist $\eps>0$ and $s'\in\R$ such that $\Sigma_{f,t_0}(s') \geq 1+2\eps$, then, there exist $\rho>0$ and $C>0$ such that
    \[
      \forall u,v\in B(t_0,\rho);\quad \abs{f(u) - f(v)} \leq C \abs{u-v}^{1+\eps} \pthb{\abs{u-t_0}+\abs{v-t_0}}^{-s'}.
    \]
    In particular, for each $k\in\N^*$, there exists $C_k>0$ such that
    \[
      \forall u,v\in\ivff{t_0+\tfrac{\rho}{k},t_0+\rho} ;\quad \abs{f(u) - f(v)} \leq C_k \abs{u-v}^{1+\eps},
    \]
    proving that $f$ is differentiable with null derivatives on the interval $\ivff{t_0+\tfrac{\rho}{k},t_0+\rho}$. As it is satisfies for every $k\in\N^*$ and $f$ is continuous at $t_0$, it must be constant on the interval $\ivff{t_0-\rho,t_0+\rho}$, which is in contradiction with the assumption of $f$ not locally constant.
	
	  \item Let now show the inequality
	  \[
      \forall s'\in\R ;\quad \Sigma_{f,t_0}(s') \geq \sigma_{f,t_0}(s')\wedge\pth{s'+p_{f,t_0}}\wedge 1,
    \]
    where
    $
      p_{f,t_0} = \inf\brcb{n\geq 1 : f^{(n)}(t_0)\text{ exists and }f^{(n)}(t_0)\neq 0}.
    $
    
    Let $s'\in\R$ and let first assume that $\Sigma_{f,t_0}(s') \geq 0$ (the general case is explained at the end of the proof). For all $\sigma < \sigma_{f,t_0}(s')$, there exist $\rho>0$ and a polynomial $P$ such that for all $u,v\in B(t_0,\rho)$,
    \[
      \absb{ (f(u)-P(u)) - (f(v)-P(v)) } \leq \abs{u-v}^\sigma \pthb{\abs{u-t_0}+\abs{v-t_0}}^{-s'}.
    \]
    It has been observed in \cite{Echelard(2006)} that  $P$ can be chosen as the Taylor expansion of $f$ at $t_0$,
    \[
      P(u) = \sum_{k=0}^{N} a_k (u - t_0)^k.
    \]
    Furthermore, it is also proved in \cite{Echelard(2006)} that for each $k\in\N^*$ and all $s'\in\R$,
    \[
      \absb{ (u-t_0)^k - (v-t_0)^k } \leq C_k \abs{u-v}^{(k+s')\wedge 1} \pthb{\abs{u-t_0}+\abs{v-t_0}}^{-s'},
    \]
    in the neighbourhood of $t_0$.
    
    Then, according to the definition of $p_{f,t_0}$, we observe that necessarily $a_k=0$ for each $k\in\brc{1,\dotsc,p_{f,t_0}-1}$. Therefore, there exist $\rho>0$ and $C > 0$ such that for all $u,v\in B(t_0,\rho)$,
    \[
      \abs{ P(u) - P(v) } \leq C \abs{u-v}^{(p_{f,t_0}+s')\wedge 1} \pthb{\abs{u-t_0}+\abs{v-t_0}}^{-s'}.
    \]
    
    Finally, we obtain that for all $u,v\in B(t_0,\rho)$,
    \begin{align*}
      \abs{ f(u) - f(v) } 
      &\leq \absb{ (f(u)-P(u)) - (f(v)-P(v)) } + \abs{ P(u) - P(v) } \\
      &\leq C \abs{u-v}^{\sigma\wedge(p_{f,t_0}+s')\wedge 1} \pthb{\abs{u-t_0}+\abs{v-t_0}}^{-s'},
    \end{align*}
    which proves the expected inequality.
    
    \item Let finally show the converse inequality
    \[
      \forall s'\in\R ;\quad \Sigma_{f,t_0}(s') \leq \sigma_{f,t_0}(s')\wedge\pth{s'+p_{f,t_0}}\wedge 1.
    \]
    The boundedness $\Sigma_{f,t_0}(s') \leq 1$ has already been proved. Let $s'\in\R$ with $\Sigma_{f,t_0}(s') \geq 0$. 
    
    Let first suppose that $\sigma_{f,t_0}(s') > (s'+p_{f,t_0})$. Then, the function $f$ and the polynomial $P$ are such that
    \[
      \limsup_{\rho\rightarrow 0} \sup_{u,v\in B(t_0,\rho)} \frac{ (f(u)-P(u)) - (f(v)-P(v)) }{ \abs{u-v}^{s'+p_{f,t_0}} \pthb{\abs{u-t_0}+\abs{v-t_0}}^{-s'} } = 0.
    \]
    
    Furthermore, we know from \cite{Echelard(2006)} that
    \[
      \limsup_{\rho\rightarrow 0} \sup_{u,v\in B(t_0,\rho)} \frac{ \abs{ (u-t_0)^{p_{f,t_0}} - (v-t_0)^{p_{f,t_0}} } }{ \abs{u-v}^{s'+p_{f,t_0}} \pthb{\abs{u-t_0}+\abs{v-t_0}}^{-s'} } > 0.
    \]
    Since $a_{p_{f,t_0}}\neq 0$ and $a_k=0$ for each $k < p_{f,t_0}$,  we have
    \[
      \limsup_{\rho\rightarrow 0} \sup_{u,v\in B(t_0,\rho)} \frac{ \abs{ P(u)-P(v) } }{ \abs{u-v}^{s'+p_{f,t_0}} \pthb{\abs{u-t_0}+\abs{v-t_0}}^{-s'} } > 0,
    \]
    which implies
    \[
      \limsup_{\rho\rightarrow 0} \sup_{u,v\in B(t_0,\rho)} \frac{ \abs{f(u)-f(v)} }{ \abs{u-v}^{s'+p_{f,t_0}} \pthb{\abs{u-t_0}+\abs{v-t_0}}^{-s'} } > 0,
    \]
    and thus $\Sigma_{f,t_0}(s') \leq \pth{s'+p_{f,t_0}} = \sigma_{f,t_0}(s')\wedge\pth{s'+p_{f,t_0}}$. The case $\sigma_{f,t_0}(s') < (s'+p_{f,t_0})$ is treated similarly. 
    
    Finally, let suppose $\sigma_{f,t_0}(s') = s'+p_{f,t_0}$. 
    As $P$ corresponds to the Taylor expansion of $f$ at $t_0$ and for each $k < p_{f,t_0}$, $a_k=0$, we have
    \[
      \lim_{u\rightarrow t_0} \frac{ f(u) - f(t_0) } {(u-t_0)^{p_{f,t_0}}} = \lim_{u\rightarrow t_0} \frac{ f(u) - f(t_0) } {(u-t_0)^{p_{f,t_0}+s'}(u-t_0)^{-s'}} = a_{p_{f,t_0}} \neq 0,
    \]
    proving that $\Sigma_{f,t_0}(s') \leq \pth{s'+p_{f,t_0}}$.
    
    \item We have proved that $\Sigma_{f,t_0}(s') = \sigma_{f,t_0}(s')\wedge\pth{s'+p_{f,t_0}}\wedge 1$ for all $s'\in\R$ such that $\Sigma_{f,t_0}(s')\geq 0$.
    
    Let $k\in\N^*$ and $s'\in\R$ such that $\Sigma_{f,t_0}(s')\in\ivff{-k,-k+1}$. According to Definitions~\ref{eq:def_2ml_spaces2} and \ref{def:pseudo_spaces}, we observe that both pseudo and classic 2-microlocal frontiers are defined using increments of the $k^{th}$ integration of $f$ at $t_0$: $I_{t_0}^k(f)$. Based on this remark, we can adapt the previous reasoning to every $I_{t_0}^k(f)$, $k\in\N$, therefore proving the equality for all $s'\in\R$.
    \end{enumindent} 
\end{proof}

We end this section with the proof of Theorem \ref{th:int_pseudo_frontier} related to the behaviour of the pseudo 2-microlocal frontier when a function is integrated.
\begin{proof}[Proof of Theorem \ref{th:int_pseudo_frontier}]
  Recall that $F$ is defined by 
  \[
    \forall t\in\R;\quad F(t) = \int_0^t f(s) \,\dt s.
  \]
  If $f$ is locally equal to $0$ at $t_0$, $F$ is clearly constant in some neighbourhood of $t_0$, and therefore, both frontiers of $f$ and $F$ are equal to $+\infty$.
  
  \begin{enumindent}
	  \item Then, let consider the case $f(t_0)\neq 0$. Without any loss of generality, we can suppose $f(t_0) > 0$. Since $f$ is continuous, there exist $\rho,C_1,C_2>0$ such that for all $u,v\in B(t_0,\rho)$,
	  \[
      C_1 \abs{ u - v} \leq \abs{ F(v)-F(u) } \leq C_2 \abs{u-v}.
    \]
    $F$ locally behaves like the function $x\mapsto x$, and thus has the following pseudo frontier 
    \[
      \forall s'\in\R;\quad \Sigma_{F,t_0}(s') = (1+s')\wedge 1.
    \]
    \item Finally, let suppose $f(t_0) =  0$, but is not locally equal to $0$.
    
    We know that the classic 2-microlocal frontier of $F$ satisfies $\sigma_{F,t_0} = \sigma_{f,t_0}+1$. Furthermore, as $f$ is continuous, $F'=f$ and therefore, as $F'(t_0)=0$, the exponent $p_{F,t_0}$ defined previously is simply equal to $p_{f,t_0}+1$.
  
  Theorem \ref{th:classic_pseudo_frontiers} implies for all $s'\in\R$,
  \[
    \Sigma_{f,t_0}(s') = \sigma_{f,t_0}(s')\wedge\pth{s'+p_{f,t_0}+1}\wedge 1.
  \]
  Therefore, we obtain
  \begin{align*}
    \Sigma_{F,t_0}(s') &= \sigma_{F,t_0}(s')\wedge\pth{s'+p_F,t_0+1}\wedge 1 \\
    &= \pthb{\sigma_{f,t_0}(s')+1}\wedge\pth{s'+p_{f,t_0}+2}\wedge 2\wedge 1 \\
    &= \pthB{\pthb{\sigma_{f,t_0}(s')\wedge\pth{s'+p_{f,t_0}+1}\wedge 1}+1}\wedge 1 \\
    &= \pthb{ \Sigma_{f,t_0}(s') +1 }\wedge 1.
  \end{align*}
  \end{enumindent}
\end{proof}

%% Martingale examples %%
\subsection{Martingale example} \label{sec:app_mg}

Example \ref{ex:mg1} describes a martingale with particular local regularity. This section details the construction of the quadratic variation which leads to this interesting 2-microlocal frontier.

\begin{lemma} \label{lemma:function}
  There exists a deterministic non-decreasing function $f_\alpha$ which has the following pseudo 2-microlocal frontier at $0$
  \[
    \forall s'\geq -1;\quad \Sigma_{f_\alpha,0}(s') = \ptha{ \frac{s'+ 1}{1-\log_2(\alpha) } }\wedge 1,
  \]
  where $\alpha\in\ivff{0,1}$ and $\log_2(\alpha) = \frac{\log(\alpha)}{\log(2)}$.
\end{lemma}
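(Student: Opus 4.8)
The plan is to realize $f_\alpha$ as a \emph{monotone chirp}: a non-decreasing function that is constant away from a geometric sequence of points accumulating at $0$, and that performs a steep increase on a tiny interval around each such point. Set $\gamma = \frac{1}{1-\log_2\alpha}$, so that $\gamma\in\ivof{0,1}$ for $\alpha\in\ivof{0,1}$ (the case $\alpha=0$ being degenerate and $\gamma=1$ recovering a Lipschitz profile when $\alpha=1$). For each $n\geq 1$ I would place, on an interval $J_n$ of length $w_n = 2^{-n/\gamma} = (\alpha/2)^n$ sitting inside the gap $\pth{x_{n+1},x_n}$ with $x_n=2^{-n}$, an increasing profile (e.g. a linear ramp) of height $h_n = 2^{-n}$, and keep $f_\alpha$ constant between consecutive $J_n$. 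Since $w_n$ decays faster than the gaps $x_n-x_{n+1}\asymp 2^{-n}$, the intervals $J_n$ are pairwise disjoint for $n$ large, so $f_\alpha$ is continuous, bounded (as $\sum_n h_n<\infty$) and non-decreasing. The whole construction is driven by two compatible scale relations: $h_n=w_n^\gamma$, which will pin the local exponent at the bumps to $\gamma$, and $\sum_{k\geq n}h_k\asymp 2^{-n}\asymp x_n$, which will pin the pointwise exponent at $0$ to $1$.

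Next I would read off the two endpoint exponents directly from Definition \ref{def:pseudo_exp}. Monotonicity gives $f_\alpha(x)-f_\alpha(0)=\sum_{k:J_k\subset\ivff{0,x}}h_k+(\text{partial})\asymp x$, whence $\psd{\alpha}_{f_\alpha,0}=1$, matching $\Sigma_{f_\alpha,0}(-1)=0$. For the local exponent, the largest oscillation of $f_\alpha$ inside $B(0,\rho)$ at scale $\rho$ is produced by a single bump: taking $u,v$ at the two ends of $J_n$ yields $\abs{f_\alpha(u)-f_\alpha(v)}=h_n=w_n^\gamma=\abs{u-v}^\gamma$, while any pair at a sub-scale $\rho\leq w_n$ inside a bump satisfies $\abs{f_\alpha(u)-f_\alpha(v)}\leq(h_n/w_n)\rho\leq \rho^\gamma$ (using $h_n=w_n^\gamma$); hence $\widetilde{\psd{\alpha}}_{f_\alpha,0}=\Sigma_{f_\alpha,0}(0)=\gamma$.

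I would then establish the frontier by proving the two inequalities separately. For the \emph{upper bound} I evaluate the defining ratio of $\psd{C}^{\sigma,s'}_{0}$ on the extremal pairs $u,v$ spanning all of $J_n$: there $\abs{f_\alpha(u)-f_\alpha(v)}=h_n=2^{-n}$, $\abs{u-v}=w_n$ and $\abs{u}+\abs{v}\asymp x_n=2^{-n}$, so the ratio behaves like $2^{\,n(\sigma/\gamma-1-s')}$, which stays bounded as $n\to\infty$ only when $\sigma\leq\gamma(s'+1)$. Combined with the general bound $\Sigma_{f_\alpha,0}\leq 1$ from Theorem \ref{th:classic_pseudo_frontiers} (valid since $f_\alpha$ is not locally constant), this gives $\Sigma_{f_\alpha,0}(s')\leq\pth{\gamma(s'+1)}\wedge 1$ for $s'\geq -1$. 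For the \emph{lower bound} I must show that for every $\sigma<\pth{\gamma(s'+1)}\wedge 1$ there is $C>0$ with $\abs{f_\alpha(u)-f_\alpha(v)}\leq C\abs{u-v}^\sigma\pthb{\abs{u}+\abs{v}}^{-s'}$ for all $u,v$ near $0$. Here I would split the pairs into three regimes — both inside one bump, separated by a flat region, and straddling several bumps — and in each regime bound the increment by either the within-bump estimate $\abs{u-v}^\gamma$ or the cumulative estimate $\asymp\abs{u}+\abs{v}$ coming from monotonicity, then optimise against the weight $\pthb{\abs{u}+\abs{v}}^{-s'}$.

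The main obstacle is precisely this uniform lower bound: one has to verify that summing bump contributions over \emph{all} intermediate scales never yields a constraint stricter than $\sigma\leq\gamma(s'+1)$, i.e. that the two chosen relations $h_n=w_n^\gamma$ and $\sum_{k\geq n}h_k\asymp x_n$ are exactly compatible with a single affine frontier of slope $\gamma$ capped at $1$. Everything else — disjointness of the $J_n$, the two exponent computations, and the upper bound — is routine once the scales $h_n=2^{-n}$, $w_n=(\alpha/2)^n$ are fixed. I would conclude by checking the boundary case $\alpha=1$ ($\gamma=1$, $f_\alpha$ essentially linear with frontier $\pth{s'+1}\wedge 1$) directly, which confirms the stated formula on the whole range $s'\geq -1$.
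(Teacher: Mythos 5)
Your proposal is correct and follows essentially the same route as the paper: the paper's $f_\alpha$ is built as a uniform limit of iterates that place, adjacent to each dyadic point $2^{-n}$, a ramp of height $2^{-(n+1)}$ and width $(\alpha/2)^{n+1}$ with the function constant on the rest of the gap, which is exactly your monotone chirp with $h_n=2^{-n}$, $w_n=(\alpha/2)^n$. The upper bound via the extremal pairs spanning one ramp and the lower bound via the case split (pairs within one dyadic block, handled by the slope estimate $\abs{f(u)-f(v)}\leq\alpha^{-(n+1)}\abs{u-v}$ interpolated against $f(u)+f(v)\lesssim\abs{u}+\abs{v}$, versus well-separated pairs, handled by monotonicity and $f(u)\leq u$) are precisely the two regimes the paper works through, so the step you flag as the main obstacle is resolved exactly as you outline.
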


\begin{proof}
  The construction of this example is similar to Cantor's function. Beginning with the identity $f_0:x\mapsto x$, we iterate the construction using the following process. Let first choose a parameter $\alpha\in\ivff{0,1}$. At each step $N$, the function $f_{N-1}$ is equal to the identity on the interval $\ivff{0,\frac{1}{2^N}}$. Then, $f_N$ is obtained by modifying $f_{N-1}$ on this interval as described in figure \ref{fig:ex_function}.
  \begin{figure}[!ht]
    \centering
    \includegraphics[scale=0.4]{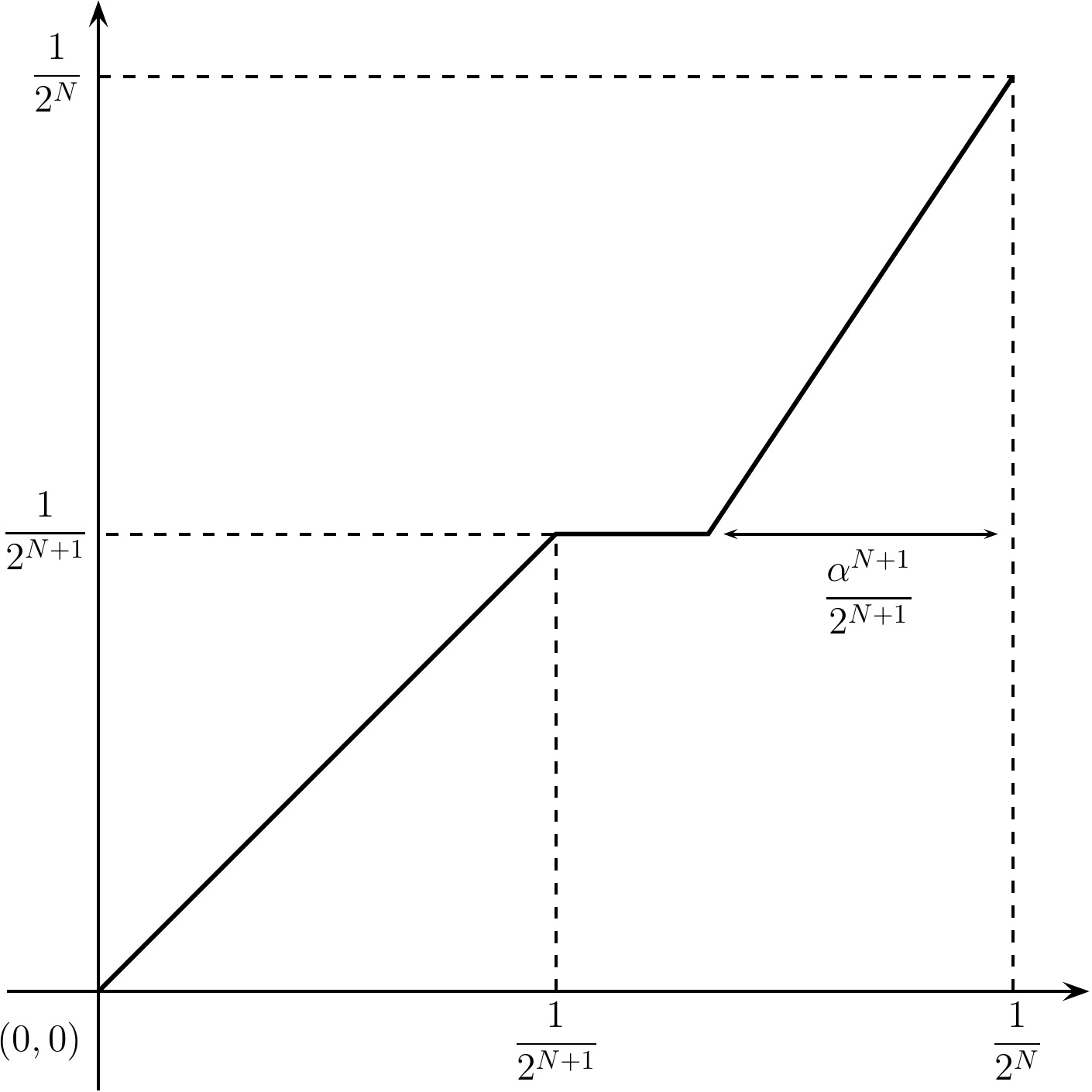}
    \caption{Function $f_N$ obtained on the interval $\ivff{0,\frac{1}{2^N}}$.}
    \label{fig:ex_function}
  \end{figure}

  The sequence $(f_N)_{N\in\N}$ uniformly converges to a a continuous function $f$ since it satisfies a Cauchy criterion
  \[
    \forall n,p\in\N;\quad \norm{f_{n+p} - f_n}_\infty \leq 2^{-n}.
  \]

  Let now describe the pseudo 2-microlocal frontier of this function $f$ at $0$.
  \begin{enumindent}
    \item We first consider one-side inequality,
    \[
      \forall s'\geq -1;\quad \Sigma_{f_\alpha,0}(s') \leq \ptha{ \frac{s'+ 1}{1-\log_2(\alpha) } }\wedge 1.
    \]
    As usually when one wants to obtain an upper bound for a regularity exponent, specific sequences $(s_n)_{n\in\N}$ and $(t_n)_{n\in\N}$ are constructed to capture the irregularity of the function. More precisely, let $t_n=2^{-n}$ and $s_n=t_n - \pthb{\frac{\alpha}{2}}^{n+1}$. Then, for every $n\in\N$, $\sigma\in\ivff{0,1}$ and $s'\geq -1$, we have
    \begin{align*}
      \abs{ f(s_n)-f(t_n) } 
      &= 2^{-(n+1)} 
      = 2^{-(n+1)(1+s')} \cdot 2^{s'(n+1)} \\
      &= \abs{t_n - s_n }^{\frac{1+s'}{1-\log_2(\alpha)}} \cdot 2^{s'(n+1)} 
      \geq C \abs{t_n - s_n }^{\frac{1+s'}{1-\log_2(\alpha)}} \pthb{\abs{t_n} + \abs{s_n}}^{-s'},
    \end{align*}
    which shows this first inequality.
    
    \item Let now prove the converse inequality. Let $s'\geq -1$ and $\sigma < \ptha{ \frac{s'+ 1}{1-\log_2(\alpha) } }\wedge 1$. Then, we have to show that there exist $\rho > 0$ and $C>0$ such that
    \[
      \forall u,v \in B(0,\rho);\quad \abs{ f(u) - f(v) } \leq C\abs{u-v}^\sigma \pthb{ \abs{u}+\abs{v} }^{-s'}.
    \]
    
    Let first observe that for all $u\in\R_+$, we have $f(u) \leq u$. 
    
    Let $u\leq v\in\ivff{0,1}$ and $m\in\N$ such that $2^{-(m+1)} \leq v \leq 2^{-m}$. We distinguish two different cases.
    \begin{itemize}
	    \item If $u \leq 2^{-(m+2)}$, then $2^{-(m+2)} \leq \abs{u-v} \leq 2^{-(m-1)}$. Therefore for all $s'\geq -1$, we obtain
	    \begin{align*}
	      \abs{ f(u) - f(v) } 
	      &\leq f(u) + f(v) \\
	      &\leq 2^{-(m-1)} 
	      = 2^{-(m-1)(1+s')} \cdot 2^{s'(m-1)} \\
	      &\leq C \abs{u-v}^{1+s'} \pthb{ \abs{u}+\abs{v} }^{-s'} 
	      \leq C \abs{u-v}^\sigma \pthb{ \abs{u}+\abs{v} }^{-s'},
	    \end{align*}
	    since $\sigma < \ptha{ \frac{s'+ 1}{1-\log_2(\alpha) } }\wedge 1 \leq 1+s'$ and $\abs{u-v}\in\ivff{0,1}$.
	    
	    \item If $u \geq 2^{-(m+2)}$. For sake of simplicity, we suppose $u \geq 2^{-(m+1)}$ (the general case is as simple, but longer to argue). Then, according to the construction of the function $f_n$, we observe that 
	    \begin{align*}
	      \abs{f(u)-f(v)} 
	      &\leq \abs{f(u)-f(v)}^{\sigma} \cdot \abs{f(u)-f(v)}^{1-\sigma} \\
	      &\leq \pthb{ \abs{u-v}\alpha^{-(n+1)} }^\sigma \cdot \pthb{f(u)+f(v)}^{1-\sigma} \\
	      &\leq \abs{u-v}^\sigma 2^{-(n+1)\sigma\log_2(\alpha)} \pthb{ \abs{u}+\abs{v} }^{1-\sigma} \\
	      &\leq C \abs{u-v}^\sigma \pthb{ \abs{u}+\abs{v} }^{1-\sigma+\sigma\log_2(\alpha)} \\
	      &\leq C \abs{u-v}^\sigma \pthb{ \abs{u}+\abs{v} }^{-s'},
	    \end{align*}
	    since $-s' \leq 1-\sigma+\sigma\log_2(\alpha)$.
    \end{itemize}
    Based on these different cases, we obtain the inequality
    \[
      \forall s'\geq -1;\quad \Sigma_{f_\alpha,0}(s') \geq \ptha{ \frac{s'+ 1}{1-\log_2(\alpha) } }\wedge 1,
    \]
    which concludes the proof.
  \end{enumindent}
  
   We note that the pointwise H\"older $\alpha_{f,0}$ exponent does not depend on $\alpha$ and is always equal to $1$, whereas the local H\"older exponent $\widetilde\alpha_{f,0}$ is equal to $\frac{1}{1-\log_2(\alpha)}$. Therefore in the case of $\alpha=1$, the function is simply the identity and $\widetilde\alpha_{f,0}=1$ and on the contrary, when $\alpha=0$, $\widetilde\alpha_{f,0}=0$ and the function $f$ has jumps at every $\frac{1}{2^n}$ (but is still continuous at $0$).
\end{proof}

%%%%%%%%%%%%%%%%%%%%%%%%%%%%%%%%%%%%%%%%%%%%%%%%%%%%%%%%%%%%%%%%%%%%%%%%
%% References
%%%%%%%%%%%%%%%%%%%%%%%%%%%%%%%%%%%%%%%%%%%%%%%%%%%%%%%%%%%%%%%%%%%%%%%%

\end{document}